\DeclareSymbolFontAlphabet{\amsmathbb}{AMSb}%
\DeclarePairedDelimiter{\floor}{\lfloor}{\rfloor}
\newcommand{\R}{\amsmathbb{R}}
\newcommand{\D}{\amsmathbb{D}}
\newcommand{\C}{\amsmathbb{C}}
\newcommand{\N}{\amsmathbb{N}}
\newcommand{\cA}{\mathcal{A}}
\newcommand{\cC}{\mathcal{C}}
\newcommand{\cD}{\mathcal{D}} %Domains of Operators
\newcommand{\cF}{\mathcal{F}}
\newcommand{\cG}{\mathcal{G}}
\newcommand{\cH}{\mathcal{H}}
\newcommand{\cL}{\mathcal{L}}
\newcommand{\cS}{\mathcal{S}}
\newcommand{\cV}{\mathcal{V}}
\newcommand{\Op}{\operatorname{O}}
\DeclareMathOperator{\E}{\amsmathbb{E}} %expectation
\DeclareMathOperator{\trace}{Tr}
\newcommand{\dd}{\,\mathrm{d}}
\newcommand{\inpro}[3][{}]{ \left\langle #2 , #3 \right\rangle_{#1} }
\newcommand{\norm}[2]{\| #1 \|_{#2}}
\newcommand{\lrnorm}[2]{\left\| #1 \right\|_{#2}}
\newcommand{\Bignorm}[2]{\Big\| #1 \Big\|_{#2}}
\newcommand{\dfloor}[1]{\floor{#1}_{\Delta t}}
\newtheorem{lemma}{Lemma}[section]
\newtheorem{proposition}[lemma]{Proposition}
\newtheorem{theorem}[lemma]{Theorem}
\theoremstyle{remark}
\theoremstyle{definition}
\newtheorem{assumption}[lemma]{Assumption}
\begin{document}
	\title[Weak convergence for semilinear hyperbolic SPDE with additive noise]{Weak convergence of fully discrete finite element approximations of semilinear hyperbolic SPDE with additive noise
	}
	
	\author[M.~Kov\'acs]{Mih\'aly Kov\'acs} \address[Mih\'aly Kov\'acs]{\newline Faculty of Information Technology and Bionics
		\newline P\'azm\'any P\'eter Catholic University
		\newline H-1444 Budapest, P.O. Box 278}. \email[]{kovacs.mihaly@itk.ppke.hu}
	
	\author[A.~Lang]{Annika Lang} \address[Annika Lang]{\newline Department of Mathematical Sciences
		\newline Chalmers University of Technology \& University of Gothenburg
		\newline S--412 96 G\"oteborg, Sweden.} \email[]{annika.lang@chalmers.se}
	
	\author[A.~Petersson]{Andreas Petersson} \address[Andreas Petersson]{\newline Department of Mathematical Sciences
		\newline Chalmers University of Technology \& University of Gothenburg
		\newline S--412 96 G\"oteborg, Sweden.} 
	\email[]{andreas.petersson@chalmers.se}
	
	\thanks{
		Acknowledgement. MK thanks the support of the Swedish Research Council (VR) through Grant~No.~2017-04274 and the Marsden Fund of the Royal Society of New Zealand through Grant~No.~18-UOO-143. AL and AP thank the support of the Knut and Alice Wallenberg foundation, the Swedish Research Council under Reg.~No.~621-2014-3995 and the Wallenberg AI, Autonomous Systems and Software Program (WASP) funded by the Knut and Alice Wallenberg Foundation.
		%Acknowledgement. The authors wish to express many thanks to \todo{[insert smart people here]} for their support and for fruitful discussions and to \todo{number of referees} anonymous referees who helped to improve the results and the presentation.
	}
	
	\subjclass{60H15, 65M12, 60H35, 65C30, 65M60, 60H07}
	% TODO check subjclasses
	\keywords{Stochastic partial differential equations, stochastic wave equations, stochastic hyperbolic equations, weak convergence, finite element methods, Galerkin methods, rational approximations of semigroups, Crank--Nicolson method, Malliavin calculus.}
	
	\begin{abstract}
		The numerical approximation of the mild solution to a semilinear stochastic wave equation driven by additive noise is considered. A standard finite element method is employed for the spatial approximation and a a rational approximation of the exponential function for the temporal approximation. First, strong convergence of this approximation in both positive and negative order norms is proven. With the help of Malliavin calculus techniques this result is then used to deduce weak convergence rates for the class of twice continuously differentiable test functions with polynomially bounded derivatives. Under appropriate assumptions on the parameters of the equation, the weak rate is found to be essentially twice the strong rate. This extends earlier work by one of the authors to the semilinear setting. Numerical simulations illustrate the theoretical results.
	\end{abstract}
	
	\maketitle
	
	\section{Introduction}%\label{sec:intro}
	
	The stochastic wave equation is an evolutionary equation that can be used to model various time dependent phenomena influenced by random forces. One example (see~\cite{D09}) is the vertical displacement $u: [0,T] \times \cD \to \R$ of a DNA string suspended in a liquid, 
		\begin{equation}
		\label{eq:DNA_intro_equation}
		\dd \dot{u} (t) - \Delta u(t) \dd t =  - Qu(t) \dd t + \dd W(t)
		\end{equation}
		for $t \in (0,T]$, $T<\infty$, where $\Delta$ is the Laplacian with suitable boundary conditions on a convex domain $\cD \subset \R^d$, $d=1,2,3$. The first term on the right hand side of~\eqref{eq:DNA_intro_equation} models friction due to viscosity of the fluid, while the Gaussian noise term $\dd W(t)$ corresponds to random bombardment of the DNA string by the fluid's molecules. This noise is white in time with spatial correlation described by the linear operator $Q$ on $L^2(\cD) = L^2(\cD,\R)$, the same operator as in the friction term.
		Thus~\eqref{eq:DNA_intro_equation} can be treated as a stochastic partial differential equation in the It\^o  sense, driven by a Wiener process~$W$ in $L^2(\cD)$.
		
		In this paper, we are concerned with the more general setting that the friction due to viscosity may depend non-linearly on the displacement of the DNA string and that the intensity of the molecular bombardment may vary in time. We thus consider the equation
		\begin{equation}
		\label{eq:intro_stochastic_wave_equation}
		\dd \dot{u} (t) - \Delta u(t) \dd t =  F(t,u(t)) \dd t + G(t) \dd W(t),
		\end{equation}
		with the goal of analyzing errors stemming from the approximation of this equation by finite elements and a rational approximation of the exponential function. The Laplacian $\Delta$ is assumed to satisfy zero Dirichlet boundary conditions, i.e., $u(t) = 0$ on $\partial \cD$ for all times $t \in (0,T]$, and the equation has initial conditions $u(0) =u_0$ and $\dot{u}(0) =v_0$. 
	
	In general, \eqref{eq:intro_stochastic_wave_equation} cannot be solved analytically. The question of how to find an approximation $\hat{u}$ of $u$ and how to evaluate the quality of such an approximation a priori is therefore of great importance if one wants to use this equation in practice. 
	%An approximation is typically obtained by solving the equation on a discretization of $L^2(\cD)$ (e.g., by finite elements) and/or $[0,T]$ (e.g., by the backward Euler scheme). 
	In order to implement an approximation on a computer, the equation is typically discretized both in the spatial and temporal domain, in which case the resulting approximation $\hat{u}$ is said to be fully discrete. In the literature, the quality of $\hat u$ is in general evaluated by analyzing the rate of decay of the strong error $\E[\norm{u - \hat{u}}{L^2(\cD)}^2]^{1/2}$ (see~\cite{ACLW16,CY07, CLS13, CQ15, CJL19, JJW15, KLL12, KLL13, KLS15, QS06, W06, W15,WGT14}). Comparatively few results (see~\cite{CJL19,H10,JJW15, KLL12, KLL13, KLS15,W15}) exist on the rate for the weak error $|\E[\phi(u) - \phi(\hat{u})]|$, where $\phi\colon L^2(\cD) \to \R$ is a sufficiently smooth real-valued test function. Of the results cited, only~\cite{H10} provides a weak convergence result for a fully discrete approximation of a semilinear stochastic wave equation. If $\phi$ is (locally) Lipschitz, the weak error can be bounded by the strong error, but in analysis the rate of decay of the weak error as one considers finer and finer approximations is often found to be twice the rate of the strong error. 
	
	The outline of our paper is the following. In Section~\ref{sec:stoch_wave}, we analyze~\eqref{eq:intro_stochastic_wave_equation} in a more general, abstract, Hilbert space setting and show spatial and temporal regularity results under mild assumptions on $F$. 
	
	In Section~\ref{sec:appr_and_conv} we deduce strong and weak error rates for the approximation of the so called mild solution $u$ of~\eqref{eq:intro_stochastic_wave_equation} by means of a finite element approximation (by piecewise linear or quadratic functions) in space and a rational approximation of the exponential function in time, generalizing the result of~\cite{KLL13} to the semilinear setting. This approach sets the paper apart from several recent works (e.g., \cite{ACLW16, CLS13, CQ15, CJL19, W15, WGT14}) on the stochastic wave equation that consider trigonometric integrators for the temporal approximation. There are situations when such integrators could be better suited such as highly oscillatory data but for complicated domain geometries the algorithms in the present article could be more advantageous from an implementation point of view, since they do not require any knowledge of the eigenfunctions of~$\Delta$ or its discrete counterpart.
	
	For the analysis we take a similar approach as the author of~\cite{W15}, by using negative norm strong convergence rates in our analysis of the weak error. However, instead of using Kolmogorov's equation and the It\^o formula, we complete the analysis by means of Malliavin calculus. Our results are applicable under slightly more general assumptions on $F(t,\cdot)$ compared to~\cite{W15}, specifically when $F(t,\cdot)$ is a Nemytskij operator, i.e., when $F(t,u)(x) = f(t,u(x))$ for $u \in L^2(\cD)$ and almost every $x \in \cD$. Here $f(t,\cdot)$ is a real-valued function of at most linear growth, with bounded and Lipschitz-continuous first derivative.
	%It is noteworthy, however, that the analysis is done under Assumption~\ref{assumptions:2}, which only takes into account the second derivative of $f$ through an abstract negative norm condition on $F'$. 
	The test function $\phi$ is assumed to be twice G\^ateaux differentiable with polynomially bounded derivatives.
	
	Section~\ref{sec:examples_simulation} finishes the main part of the paper with examples in which it is noted that when $F$ is a sufficiently smooth Nemytskij operator, the derived weak convergence rates are essentially twice as big as the strong convergence rates, provided that the initial value is smooth, for $d=1,2$ when the covariance operator $Q$ of $W$ is of trace-class, and $d=1$ when $Q=I$. Numerical simulations in $d=1$ illustrate our theoretical results. 
	
	In Appendix~\ref{sec:appendix}, which completes the paper, it is shown that a sufficiently smooth Nemytskij operator fulfills the assumptions of Section~\ref{sec:appr_and_conv}.
	
	Throughout the paper, we adopt the notion of generic constants, which is to say that the symbol $C$ is used to denote a positive and finite number which may vary from occurrence to occurrence and is independent of any parameter of interest, such as spatial and temporal step sizes in a numerical method. We use the expression $a \lesssim b$ to denote the existence of a generic constant $C$ such that $a \le C b$.

	\section{The stochastic wave equation}
	\label{sec:stoch_wave}
	
	% Notation policy: Formulate regularity assumptions, lemmas and proofs in terms of \Lambda. Formulate regularity theorems and propositions in terms of norms. Formulate results and assumptions with C, work with \lesssim in proofs.
	
	In this section the stochastic wave equation is presented along with necessary background material from probability theory and functional analysis. We use the semigroup approach of~\cite{DPZ14} and refer to this monograph for more details on the material covered here. The equation is treated in an abstract Hilbert space setting, while in the next section we restrict ourselves to the setting in which the solution takes values in the Hilbert space $L^2(\cD)$ where $\cD \subset \R^d$, $d=1,2,3$, denotes the underlying domain. 
	
	Let $(H, \inpro[H]{\cdot}{\cdot}, \norm{\cdot}{H})$ and $(U, \inpro[U]{\cdot}{\cdot}, \norm{\cdot}{U})$ be real separable Hilbert spaces. We denote by $(\cL(H,U), \norm{\cdot}{\cL(H,U)})$ the space of bounded linear operators from $H$ to $U$ equipped with the usual operator norm and by $(\cL_1(H,U), \norm{\cdot}{\cL_1(H,U)})$ and $(\cL_2(H,U), \inpro[\cL_2(H,U)]{\cdot}{\cdot}, \norm{\cdot}{\cL_2(H,U)})$ the subsets of trace-class and Hilbert--Schmidt operators, respectively. We use the shorthand notations $\cL(H) = \cL(H,H)$, $\cL_1(H) = \cL_1(H,H)$ and $\cL_2(H) = \cL_2(H,H)$. Note that if $(V, \inpro[V]{\cdot}{\cdot}, \norm{\cdot}{V})$ is another Hilbert space and if $\Gamma_2 \in \cL(U,V)$, $\Gamma_1 \in \cL_i(H,U)$, $i \in \{1,2\}$, then $\Gamma_2 \Gamma_1 \in \cL_i(H,V)$ and 
	\begin{equation}
	\label{eq:schatten_bound_1}
	\norm{\Gamma_2 \Gamma_1}{\cL_i(H,V)} \le \norm{\Gamma_2}{\cL(U,V)} \norm{\Gamma_1}{\cL_i(H,U)}.
	\end{equation}
	Similarly, if $\Gamma_1 \in \cL_i(V,H)$, $i \in \{1,2\}$, then $\Gamma_1 \Gamma_2 \in \cL_i(U,H)$ and 
	\begin{equation}
	\label{eq:schatten_bound_2}
	\norm{\Gamma_1 \Gamma_2}{\cL_i(U,H)} \le \norm{\Gamma_1}{\cL_i(V,H)} \norm{\Gamma_2}{\cL(U,V)}.
	\end{equation}
	The trace of $\Gamma \in \cL_1(H)$ is, for an orthonormal basis $(e_j)_{j=1}^\infty$ of $H$, defined by $\trace(\Gamma) = \sum^\infty_{j=1} \inpro[H]{\Gamma e_j}{e_j}$ and is independent of the choice of basis. If $\Gamma_1 \in \cL_1(H,U)$ and $\Gamma_2 \in \cL(U,H)$ then \begin{equation}
	\label{eq:trace_cyclic}
	\trace(\Gamma_1 \Gamma_2) = \trace(\Gamma_2 \Gamma_1).
	\end{equation} 
	%By $H \otimes U$ we denote the \emph{Hilbert tensor product}, i.e., the completion of the algebraic tensor product of $H$ and $U$ under the norm induced by the inner product $\inpro[H \otimes U]{u_1 \otimes v_1}{u_2 \otimes v_2} = \inpro[H]{u_1}{u_2}\inpro[U]{v_1}{v_2}$, $u_1, u_2 \in H$, $v_1, v_2 \in U$. We use the shorthand notation $H^{\otimes 2} = H \otimes H$ and $u^{\otimes 2} = u \otimes u$ for $u \in H$. The space $U \otimes H$ is identified with $\cL_2(H,U)$ via the action $(v \otimes u_1) u_2 = \inpro[H]{u_1}{u_2} v $ for $u_1, u_2 \in H$ and $v \in U$. 
	
	%\att{\todo{We do not need second derivatives (check but I'm pretty sure).}}
	We will have reason to use spaces of G\^ateaux differentiable mappings, which we define in the same way as the authors of~\cite{AKL16}. By $\cC(H,U)$ we denote the space of continuous mappings from $H$ to $U$ and by $\cG^1(H,U) \subset \cC(H,U)$ the space of G\^ateaux differentiable mappings with strongly continuous derivatives, i.e., the space of all continuous mappings $\varphi\colon H \to U$ such that
	\begin{equation*}
	\varphi'(u)v = \lim_{\epsilon\to 0} \frac{1}{\epsilon} \left(\varphi(u + \epsilon v) - \varphi(u)\right)
	\end{equation*}
	exists as a limit in $U$ for all $u, v \in H$, that $\varphi'(u) \in \cL(H,U)$ for all $u \in H$ and that the mapping $H \ni u \mapsto \varphi'(u) v$ is continuous for all $v \in H$. If in addition $\varphi' \in \cC(H,\cL(H,U))$, then $\varphi \in \cC^{1}(H,U)$, the space of Fr\'echet differentiable mappings. By $\cG^2(H,U) \subset \cG^1(H,U)$ we denote the space of all mappings $\varphi \in \cG^1(H,U)$ such that
	\begin{equation*}
	\varphi''(u)(v,w) = \lim_{\epsilon\to 0} \frac{1}{\epsilon} \left(\varphi'(u + \epsilon w) v - \varphi'(u)v\right)
	\end{equation*}
	exists as a limit in $U$ for all $u, v, w \in H$, that $\varphi''(u)\colon H \times H \to U \in \cL^{[2]}(H,U)$, the space of all bounded bilinear mappings, for all $u \in H$, that $\varphi''(u)$ is symmetric for all $u \in H$, and that the mapping $H \ni u \mapsto \varphi''(u) (v,w)$ is continuous for all $v,w \in H$. 
	%If in addition $\varphi' \in \cC(H,\cL(H,U))$ and $\varphi'' \in \cC(H,\cL^{[2]}(H,U))$, then $\varphi \in \cC^{2}(H,U)$, the space of twice Fr\'echet differentiable mappings. 
	For $n = 1,2$, we denote by $\cG^n_\mathrm{b}(H,U)$ and $\cG^n_\mathrm{p}(H,U)$ the sets of all $\varphi \in \cG^n(H,U)$ such that all derivatives of $\varphi$ (but not necessarily $\varphi$ itself) are bounded and polynomially bounded, respectively, with $\cC^1_\mathrm{b}(H,U)$ and $\cC^1_\mathrm{p}(H,U)$ defined analogously.  We use the shorthand notations $\cG^n(H) = \cG^n(H,H)$, $\cG^n_\mathrm{b}(H) = \cG^n_\mathrm{b}(H,H)$ and $\cG^n_\mathrm{p}(H) = \cG^n_\mathrm{p}(H,H)$, and similarly for the spaces of Fr\'echet differentiable mappings. For $\varphi \in \cG^1_\mathrm{p}(H,U)$ and $u,v \in H$ the mean value theorem holds in $U$, i.e., 
	\begin{equation*}
	\varphi(u) - \varphi(v) = \int^1_0 \varphi'(v + s(u-v)) (u-v) \dd s.
	\end{equation*}
	
	For $0<T<\infty$, let $(\Omega, \cA, (\cF_t)_{t \in [0,T]}, P)$ be a complete filtered probability space satisfying the usual conditions, which is to say that $\cF_0$ contains all $P$-null sets and $\cF_t = \cap_{s > t} \cF_s$ for all $t \in [0,T]$. By $L^p(\Omega,H)$, $p \in [1, \infty)$
	we denote the space of all $H$-valued random variables $X$ with norm $\norm{X}{L^p(\Omega,H)} = (\E[\norm{X}{H}^p])^{1/p}$. Let $W\colon \Omega \times [0,T] \to H$ be a Wiener
	process with a covariance operator $Q \in \cL(H)$ that is positive semidefinite and self-adjoint, but not necessarily of trace-class.
	As is usual in this setting, we write $H_0 = Q^{\frac{1}{2}}(H)$, which is a Hilbert space when equipped with the inner product $\inpro[H_0]{\cdot}{\cdot} = \inpro[H]{Q^{-\frac{1}{2}} \cdot}{Q^{-\frac{1}{2}} \cdot}$, where $Q^{-\frac{1}{2}}$ is the pseudo-inverse of $Q^{\frac{1}{2}}$. Note that for $\Gamma_1, \Gamma_2 \in \cL^0_2 = \cL_2(H_0,H)$, 
	\begin{equation}
	\label{eq:HSvTrace}
	\left|\inpro[\cL^0_2]{\Gamma_1}{\Gamma_2}\right| = \left|\trace(\Gamma_1 Q \Gamma^*_2)\right| \le \norm{\Gamma_1 Q \Gamma^*_2}{\trace}
	\end{equation}
	whenever the right hand side is finite. Here and below the shorthand notation $\norm{\cdot}{\trace} = \norm{\cdot}{\cL_1(H)}$ is used. The Wiener process allows us to handle It\^o integrals $\int_{0}^{T} \Phi(t) \dd W(t)$, for predictable stochastic processes $\Phi\colon [0,T]\times\Omega \to \cL^0_2$. The following Burkholder--Davis--Gundy type inequality turns out to be useful.
	\begin{lemma}[{\cite[Lemma~7.2]{DPZ92}}]
		\label{lem:BDG}
		For any $p \in [1,\infty)$, there exists a constant $C > 0$, such that for any predictable stochastic process $\Phi\colon [0,T]\times\Omega \to \cL^0_2$ with $\norm{\Phi}{L^p(\Omega,L^2([0,T], \cL_2^0)} < \infty$,
		\begin{equation*}
		\Bignorm{\int_{0}^{T} \Phi(t) \dd W(t)}{L^p(\Omega,H)} \le C  \norm{\Phi}{L^p(\Omega,L^2([0,T], \cL_2^0))}.
		\end{equation*}
	\end{lemma}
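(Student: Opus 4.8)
The plan is to reduce the estimate to the scalar Burkholder--Davis--Gundy inequality applied to a real-valued martingale built from $M(t) = \int_0^t \Phi(s) \dd W(s)$. Two ingredients make this possible: the It\^o isometry $\E[\norm{M(t)}{H}^2] = \E[\int_0^t \norm{\Phi(s)}{\cL^0_2}^2 \dd s]$, which already gives the case $p=2$ with $C=1$; and the hypothesis $\norm{\Phi}{L^p(\Omega,L^2([0,T],\cL^0_2))} < \infty$, which ensures that $M$ is a genuine (not merely local) continuous $H$-valued $L^2$-martingale. By the usual approximation of $\Phi$ by bounded elementary predictable processes it suffices to establish the inequality in that case, where all the quantities appearing below are manifestly finite, and then pass to the limit.

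For $p \ge 2$ I would apply the It\^o formula to $t \mapsto \norm{M(t)}{H}^2$, which yields
\[
\norm{M(t)}{H}^2 = 2 \int_0^t \inpro[H]{M(s)}{\Phi(s) \dd W(s)} + \int_0^t \norm{\Phi(s)}{\cL^0_2}^2 \dd s .
\]
The first term defines a continuous real martingale $N$ whose quadratic variation satisfies $[N]_t \le \int_0^t \norm{M(s)}{H}^2 \norm{\Phi(s)}{\cL^0_2}^2 \dd s$, since the integrand against $\dd W$ has $\cL_2(H_0,\R)$-norm bounded by $\norm{M(s)}{H}\norm{\Phi(s)}{\cL^0_2}$. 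Writing $M^*_T = \sup_{t \le T} \norm{M(t)}{H}$, the displayed identity gives $(M^*_T)^2 \le 2 \sup_{t \le T} |N(t)| + \int_0^T \norm{\Phi(s)}{\cL^0_2}^2 \dd s$; raising this to the power $p/2$, taking expectations, and applying the scalar BDG inequality to $N$ together with H\"older's inequality yields
\[
\E[(M^*_T)^p] \lesssim \E\Big[(M^*_T)^{p/2} \Big( \int_0^T \norm{\Phi(s)}{\cL^0_2}^2 \dd s \Big)^{p/4}\Big] + \E\Big[\Big(\int_0^T \norm{\Phi(s)}{\cL^0_2}^2 \dd s\Big)^{p/2}\Big].
\]
Young's inequality then absorbs the factor $(M^*_T)^{p/2}$ into the left-hand side, leaving the desired bound (in fact the stronger maximal version, which dominates $\norm{M(T)}{L^p(\Omega,H)}$).

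For $1 \le p < 2$ this absorption is unavailable, so instead I would use Lenglart's domination inequality. Evaluating the $p=2$ identity at an arbitrary bounded stopping time $\tau$ gives $\E[\norm{M(\tau)}{H}^2] = \E[\int_0^\tau \norm{\Phi(s)}{\cL^0_2}^2 \dd s]$, so the nonnegative adapted process $\norm{M(\cdot)}{H}^2$ is Lenglart-dominated by the nondecreasing predictable process $A(t) = \int_0^t \norm{\Phi(s)}{\cL^0_2}^2 \dd s$; with exponent $q = p/2 \in (0,1)$, Lenglart's inequality gives $\E[(M^*_T)^p] \lesssim \E[A(T)^{p/2}]$, which is precisely the claim. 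The one genuinely delicate point throughout is the bookkeeping that legitimizes taking expectations and performing the absorption step --- that is, knowing $\E[(M^*_T)^p]$ is finite before it is moved across the inequality --- which is handled by the preliminary reduction to bounded elementary integrands (equivalently, by localizing with the stopping times $\inf\{t : \norm{M(t)}{H} \ge n\}$ and passing to the limit by monotone convergence, the hypothesis guaranteeing the limit is finite).
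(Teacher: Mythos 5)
The paper offers no proof of this lemma at all: it is imported verbatim from Da Prato--Zabczyk (the cited Lemma~7.2), so there is nothing internal to compare against. Your argument is a correct, self-contained reconstruction, and for $p\ge 2$ it is in essence the same strategy as the cited source: apply the It\^o formula, control the resulting real martingale part, and absorb the $(M_T^*)^{p/2}$ factor by Young's inequality after the a priori finiteness has been secured by reduction to bounded elementary integrands (Da Prato--Zabczyk work directly with $\norm{x}{H}^{2r}$ and Doob's maximal inequality rather than passing through the scalar BDG inequality for $N$, but the two routes are interchangeable). Where you genuinely add something is the range $1\le p<2$: the cited lemma is stated only for exponents $2r$ with $r\ge 1$, so the paper's citation does not literally cover small $p$, and your Lenglart domination step --- using that $\norm{M(\cdot)}{H}^2$ is dominated by the continuous, hence predictable, increasing process $A(t)=\int_0^t\norm{\Phi(s)}{\cL_2^0}^2\dd s$ and invoking the inequality with exponent $q=p/2\in(0,1)$ --- is the standard and correct way to close that gap. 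All the individual estimates check out: the quadratic variation bound $[N]_t\le\int_0^t\norm{M(s)}{H}^2\norm{\Phi(s)}{\cL_2^0}^2\dd s$ follows since the $\cL_2(H_0,\R)$-norm of $h\mapsto\inpro[H]{M(s)}{\Phi(s)h}$ equals $\norm{\Phi(s)^*M(s)}{H_0}\le\norm{\Phi(s)}{\cL_2^0}\norm{M(s)}{H}$, and the scalar BDG inequality for continuous local martingales is valid for the exponent $p/2$ in the whole range you use it. So the proposal is correct and, if anything, proves slightly more than the paper's citation strictly supports.
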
 
	
	%Present wave equation. 
	We are now ready to introduce the equation studied in this paper, 
	\begin{equation}
	\label{eq:original_wave_equation}
	\begin{dcases*}
	\dd \dot{u} (t) + \Lambda u(t) \dd t =  F(t,u(t)) \dd t + G(t) \dd W(t), t \in (0,T],  \\ 
	u(0) = u_0, \dot{u}(0) = v_0.
	\end{dcases*}
	\end{equation}
	Here the solution process $u$ and the Wiener process $W$ take values in the Hilbert space $H$, $\dot{u}$ denotes the time derivative of $u$ and $F$ and $G$ are deterministic mappings. The operator $\Lambda$ is a densely defined, linear, unbounded positive self-adjoint operator with compact inverse, implying that it has an orthonormal eigenbasis $(e_j)_{j=1}^\infty$ spanning $H$ with an increasing sequence $(\lambda_j)_{j=1}^\infty$ of strictly positive eigenvalues, which are used to define fractional powers $\Lambda^{\frac{\alpha}{2}}$, $\alpha \in \R$ (see~\cite[Appendix~B]{K14}). We adopt the notation $\dot{H}^\alpha$ for the Hilbert space $D(\Lambda^{\frac{\alpha}{2}})$ and remark that 
	%For $\alpha \in \R$, the fractional operator $\Lambda^{\frac{\alpha}{2}}$ is defined by
	%\begin{equation*}
	%	\Lambda^{\frac{\alpha}{2}} v = \sum_{j=1}^{\infty} \lambda_j^\frac{\alpha}{2} v_j e_j
	%\end{equation*}
	%for all $v \in \dot{H}^\alpha = D(\Lambda^{\frac{\alpha}{2}})$, given by 
	%\begin{equation*}
	%	\dot{H}^\alpha = \left\{ v = \sum^\infty_{j=1} v_j e_j : (v_j)_{j=1}^\infty \subset \R \text{ such that } \sum_{j=1}^{\infty} \lambda_j^\alpha v_j^2 < \infty\right\}.
	%\end{equation*}
	% we denote the Hilbert space given by the domain of $\Lambda^{\frac{\alpha}{2}}$ by 
	%\begin{align*}
	%	\dot{H}^\alpha = D(\Lambda^{\frac{\alpha}{2}}) = \big\{ v = \sum^\infty_{j=1} v_j e_j : (v_j)_{j=1}^\infty \subset \R, \norm{v}{\dot{H}^\alpha}^2 = \norm{\Lambda^{\frac{\alpha}{2}} v}{L^2(\cD)}^2 = \sum_{j=1}^{\infty} \lambda_j^\alpha v_j^2 < \infty \big\}.
	%\end{align*}
	$\dot{H}^{-\alpha} \simeq (\dot{H}^\alpha)^*$ for $\alpha \ge 0$, where $(\dot{H}^\alpha)^*$ is the dual of $\dot{H}^\alpha$ with respect to $\inpro[\dot{H}^0]{\cdot}{\cdot}$ and $\dot{H}^0$ is identified with $(\dot{H}^0)^*$ by the Riesz representation theorem. We have that $\dot{H}^0 = H$ and that $\dot{H}^\zeta \subset \dot{H}^\alpha$ for $\alpha \le \zeta \in \R$ , where the embedding is dense and continuous. By \cite[Lemma~2.1]{BKM18}, for every $\zeta \in \R$,  $\Lambda^\frac{\alpha}{2}$ can be uniquely extended to an operator in $\cL(\dot{H}^\zeta,\dot{H}^{\zeta-\alpha})$. We make no notational distinction between $\Lambda^\frac{\alpha}{2}$ and its extension.
	%We note that the operator $\Lambda$ may also be regarded as an operator from $\dot{H}^1$ to $\dot{H}^{-1}$ via $(\Lambda x)(y) = \inpro[\dot{H}^1]{x}{y}$ for $x,y \in \dot{H}^1.$ Note that $\dot{H}^0 = H$ and $\dot{H}^{\alpha_1} \subseteq \dot{H}^{\alpha_2}$ for $\alpha_1 \ge \alpha_2$, where the embedding is continuous.
	
	In order to treat~\eqref{eq:original_wave_equation} in a semigroup framework, we define for $\alpha \in \R$ the Hilbert space $\cH^\alpha = \dot{H}^\alpha \oplus \dot{H}^{\alpha-1}$ with inner product $\inpro[\cH^\alpha]{v}{w} = \inpro[\dot{H}^\alpha]{v_1}{w_1} + \inpro[\dot{H}^{\alpha-1}]{v_2}{w_2}$ for $v = [v_1,v_2]^\top, w = [w_1,w_2]^\top \in \cH^\alpha$. Writing $\cH = \cH^0$, let $A\colon D(A) = \cH^1 \to \cH$, $B\colon \dot{H}^{-1} \to \cH$ 
	and $\Theta^\frac{\alpha}{2}\colon \cH^\alpha \to \cH, \alpha \in \R,$ be given by 
	\begin{equation*}
	A = \left[\begin{array}{cc}
	0 &- I \\
	\Lambda &0
	\end{array}\right],
	B = \left[
	\begin{array}{c}
	0 \\
	I
	\end{array}\right]
	\text{ and }
	\Theta^{\frac{\alpha}{2}} = \left[\begin{array}{cc}
	\Lambda^{\frac{\alpha}{2}} &0 \\
	0 & \Lambda^{\frac{\alpha}{2}}
	\end{array}\right].
	\end{equation*}
	The third operator is used to relate the norms of $\cH^\alpha$ and $\cH$ via $\norm{\cdot}{\cH^\alpha} = \norm{\Theta^{\frac{\alpha}{2}} \cdot}{\cH}$.  We also consider $P^1$, the projection onto the first coordinate of $\cH$, i.e., $P^1 v = v_1$ for $v = [v_1, v_2]^\top \in \cH$. Note that $\Theta^{\frac{\alpha}{2}} B = B \Lambda^{\frac{\alpha}{2}}$ and that therefore, the identities 
	\begin{equation}
	\label{eq:B_bound_1}
	\norm{\Theta^{\frac{\alpha}{2}} B v}{\cH} = \norm{\Lambda^{\frac{\alpha-1}{2}}v}{\dot{H}^0} = \norm{v}{\dot{H}^{\alpha-1}},
	\end{equation}
	with $v \in \dot{H}^{\alpha-1}$, and
	\begin{equation}
	\label{eq:B_bound_2}
	\norm{B}{\cL(\dot{H}^{-1},\cH)} = 
	\norm{B\Lambda^\frac{1}{2}}{\cL(\dot{H}^0,\cH)} = \norm{\Theta^{\frac{1}{2}}B}{\cL(\dot{H}^0,\cH)} = 1,
	\end{equation}
	hold. 
	
	The operator $-A$ is the generator of a $C_0$-semigroup (actually a group, see~\cite{L12}) which, for $t \in \R$, can be written as
	\begin{equation*}
	E(t) = 
	\left[\begin{array}{cc}
	C(t) & \Lambda^{-\frac{1}{2}} S(t) \\
	-\Lambda^{\frac{1}{2}} S(t) & C(t)
	\end{array}\right]
	= 
	\left[\begin{array}{cc}
	\cos(t \Lambda^{\frac{1}{2}}) & \Lambda^{-\frac{1}{2}} \sin(t \Lambda^{\frac{1}{2}}) \\
	-\Lambda^{\frac{1}{2}} \sin(t \Lambda^{\frac{1}{2}}) & \cos(t \Lambda^{\frac{1}{2}})
	\end{array}\right].
	\end{equation*}
	%where $C(t) = \cos(t \Lambda^{\frac{1}{2}})$ and $S(t) = \sin(t \Lambda^{\frac{1}{2}})$ are defined via the spectral calculus of $\Lambda$. 
	It fulfills
	\begin{equation}
	\label{eq:semigroup_bound} 
	\norm{E(t)}{\cL(\cH)} \le 1
	\end{equation}
	uniformly in $t \in \R$. We note the commutative properties, with $\alpha \in \R$,
	\begin{equation*}
	%\label{eq:E_Theta_commute}
	\Theta^{\frac{\alpha}{2}}E(\cdot) = E(\cdot) \Theta^{\frac{\alpha}{2}}
	\end{equation*}
	and 
	\begin{equation}
	\label{eq:P^1EB_Theta_commute}
	\Lambda^{\frac{\alpha}{2}}P^1 E(\cdot)B = P^1 E(\cdot)B \Lambda^{\frac{\alpha}{2}},
	\end{equation}
	so that, for all $\alpha \in [0,1]$ there exists by~\cite[Lemma~4.4]{KLL13} a constant $C>0$ such that, for all $t,s \in [0,T]$,
	\begin{equation}
	\label{eq:semigroup_time_regularity}
	\norm{\Theta^{-\frac{\alpha}{2}}\left(E(t)-E(s)\right)}{\cL(\cH)} = \norm{\left(E(t)-E(s)\right)\Theta^{-\frac{\alpha}{2}}}{\cL(\cH)} \le C |t-s|^{\alpha}
	\end{equation}
	and by~\eqref{eq:B_bound_2} and an argument similar to~\cite[(4.1)]{CLS13}, we have
	\begin{equation}
	\label{eq:sine_time_regularity}
	\begin{split}
	\norm{\Lambda^{\frac{\alpha}{2}}P^1 E(t-s)B}{\cL(\dot{H}^0)} &= \norm{P^1E(t-s)B\Lambda^{\frac{\alpha}{2}}}{\cL(\dot{H}^0)} \\
	&= \norm{S(t-s)\Lambda^{\frac{\alpha-1}{2}}}{\cL(\dot{H}^0)} \le C |t-s|^{1-\alpha}.
	\end{split}
	\end{equation}
	
	If we write $X(t) = [X_1(t), X_2(t)]^\top = [u(t), \dot{u}(t)]^\top$ for $t \in [0,T]$, then~\eqref{eq:original_wave_equation} can be written in the abstract It\^o form
	\begin{equation}
	\label{eq:ito_wave_equation}
	\dd X(t) + A X(t) \dd t = BF(t,X_1(t)) \dd t + B G(t) \dd W(t)
	\end{equation}
	with initial condition $X(0) = x_0 = [u_0, v_0]^\top$. Under the following assumption, \eqref{eq:ito_wave_equation} has a mild solution given by
	\begin{equation}
	\label{eq:mild_solution}
	X(t) = E(t) x_0 + \int^t_0 E(t-s) BF(s,X_1(s)) \dd s + \int^t_0 E(t-s) B G(s) \dd W(s)
	\end{equation}
	for $t \in [0,T]$, the existence of which we show below.
	%Assumptions.
	\begin{assumption}
		There exist parameters $\beta, \eta, \delta \ge 0$ and $\theta \le \min(\beta,\delta,1)$
		and a constant $C>0$ such that the data in~\eqref{eq:ito_wave_equation} fulfills the following requirements. 
		\label{assumptions:1}
		\hfill
		\begin{enumerate}[label=(\roman*)]
			\item \label{assumptions:1:G} 
			The mapping $G \colon [0,T] \to \cL_2(H_0,\dot{H}^{\beta-1})$ satisfies
			\begin{align*}
			\norm{\Lambda^{\frac{\beta-1}{2}} \left(G(t_1)-G(t_2)\right)}{\cL_2^0} \le C |t_1-t_2|^{\eta}
			\end{align*}
			for all $t_1, t_2 \in [0,T]$ and $\norm{\Lambda^{\frac{\beta-1}{2}} G(t)}{\cL_2^0} \le C$ for some $t \in [0,T]$.
			\item \label{assumptions:1:F}
			The function $F\colon [0,T] \times \dot{H}^0 \to \dot{H}^{0}$ satisfies
			\begin{equation*}
			\norm{ \Lambda^{-\frac{1}{2}}\left( F(t,u) - F(t,v) \right)}{\dot{H}^0} \le C \norm{u - v}{\dot{H}^0}
			\end{equation*}
			for all $t \in [0,T]$ and $u,v \in \dot{H}^0$, 
			\begin{equation*}
			\norm{\Lambda^{\frac{\alpha}{2}} F(t,u)}{\dot{H}^0} \le C \left(1 + \norm{\Lambda^{\frac{\alpha}{2}}u}{\dot{H}^0}\right),
			\end{equation*} for all $t \in [0,T]$, $u \in \dot{H}^\alpha$ and $\alpha \in \{0,\theta\}$ and
			\begin{equation*}
			\norm{\Lambda^{-\frac{1}{2}}\left(  F(t,u) - F(s,u) \right)}{\dot{H}^0} \le C \left(1 +\norm{u}{\dot{H}^0}\right) |t-s|^\eta
			\end{equation*}
			for all $s, t \in [0,T]$ and $u \in \dot{H}^0$.
			%	\item \label{assumptions:1:F} 
			%	The function $F : [0,T] \times \dot{H}^0 \to \dot{H}^{-1}$ is affine in $\dot{H}^0$, i.e., for each $t \in [0,T]$ there exists an operator $F^1_t \in \cL(\dot{H}^0,\dot{H}^{-1})$ and an element $F^2_t \in \dot{H}^{-1}$ such that $F(t, u) = F^1_t u + F^2_t$ for all $u \in \dot{H}^0$. Moreover, 
			%	\begin{equation*}
			%	\norm{\Lambda^{-\frac{1}{2}}\left(F^1_{t_1} - F^1_{t_2}\right)}{\cL(\dot{H}^0)} + \norm{\Lambda^{-\frac{1}{2}}\left(F^2_{t_1} - F^2_{t_2}\right)}{\dot{H}^0} \le C |t_1-t_2|^{\eta}  
			%	\end{equation*}
			%	and
			%	\begin{equation*}
			%		\norm{\Lambda^{\frac{\theta-1}{2}}F^1_{t_1}}{\cL(\dot{H}^0)} + \norm{\Lambda^{-\frac{1}{2}}F^1_{t_1}\Lambda^{\frac{\gamma}{2}}}{\cL(\dot{H}^0)} + \norm{\Lambda^{\frac{\theta-1}{2}}F^2_{t_1}}{\dot{H}^0} \le C 
			%	\end{equation*} 
			%	for all $t_1, t_2 \in [0,T]$.
			\item  \label{assumptions:1:x0} The initial value~$x_0 \in \cH^{\delta}$ is deterministic.
		\end{enumerate}
	\end{assumption}
	
	The following theorem is very similar to, e.g., \cite{W15}, but since the mappings $F$ and $G$ depend on $t$, and the assumptions on $F$ are slightly different than those in \cite{W15}, we include a proof of our own. 
	
	\begin{theorem}
		\label{thm:X_spat_reg}
		Let Assumption~\ref{assumptions:1} be satisfied. Then~\eqref{eq:ito_wave_equation} has a unique mild solution given by~\eqref{eq:mild_solution} and for any $r \le \min(\beta,\delta,1+\theta)$, $p \in [1,\infty)$,
		\begin{equation}
		\label{eq:spat_reg_X}
		\sup_{t \in [0,T]} \norm{X(t)}{L^p(\Omega,\cH^r)} < \infty.
		\end{equation}
	\end{theorem}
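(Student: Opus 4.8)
The plan is to proceed by a standard Picard iteration / fixed-point argument for existence and uniqueness, followed by a direct estimation of the three terms in the mild solution formula \eqref{eq:mild_solution} to obtain the regularity bound \eqref{eq:spat_reg_X}. First I would fix $p \in [1,\infty)$ and $r \le \min(\beta,\delta,1+\theta)$ and work in the Banach space $Z_p$ of predictable processes $Y\colon [0,T]\times\Omega \to \cH^0$ with $\sup_{t\in[0,T]}\norm{Y(t)}{L^p(\Omega,\cH)} < \infty$ (it suffices for the fixed-point step to work in $\cH = \cH^0$, since the Lipschitz hypothesis on $F$ in Assumption~\ref{assumptions:1}\ref{assumptions:1:F} is phrased in the $\dot H^0$-to-$\dot H^{-1}$ sense, which via $\norm{\Theta^{-1/2}B\,\cdot}{\cH}=\norm{\Lambda^{-1/2}\cdot}{\dot H^0}$ and \eqref{eq:B_bound_2} matches the $\cH$-norm on the image $E(t-s)BF$). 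Define the map $\mathcal{K}(Y)(t) = E(t)x_0 + \int_0^t E(t-s)BF(s,P^1Y(s))\dd s + \int_0^t E(t-s)BG(s)\dd W(s)$. Using \eqref{eq:semigroup_bound}, \eqref{eq:B_bound_2}, the at-most-linear-growth bound on $F$ (the $\alpha=0$ case) and Lemma~\ref{lem:BDG} together with \eqref{eq:HSvTrace}, \eqref{eq:schatten_bound_1}, \eqref{eq:schatten_bound_2} and Assumption~\ref{assumptions:1}\ref{assumptions:1:G}, one checks $\mathcal{K}$ maps $Z_p$ into itself; the Lipschitz bound on $F$ gives, after applying Minkowski's and Hölder's (or Jensen's) inequality to the drift integral, $\norm{\mathcal{K}(Y_1)(t)-\mathcal{K}(Y_2)(t)}{L^p(\Omega,\cH)} \le C\int_0^t \norm{Y_1(s)-Y_2(s)}{L^p(\Omega,\cH)}\dd s$, and a Grönwall argument (or passing to an equivalent weighted norm $\sup_t e^{-\rho t}\norm{\cdot}{L^p(\Omega,\cH)}$ with $\rho$ large) yields a unique fixed point $X$, which is the unique mild solution. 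That $X$ solves \eqref{eq:ito_wave_equation} in the mild sense is then immediate.

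For the regularity claim \eqref{eq:spat_reg_X}, I would estimate $\norm{\Theta^{r/2}X(t)}{L^p(\Omega,\cH)}$ termwise. The linear term is handled by $\norm{\Theta^{r/2}E(t)x_0}{\cH} = \norm{E(t)\Theta^{r/2}x_0}{\cH} \le \norm{x_0}{\cH^r} \le \norm{x_0}{\cH^\delta} < \infty$ using the commutation $\Theta^{r/2}E(t)=E(t)\Theta^{r/2}$, \eqref{eq:semigroup_bound}, and $r \le \delta$ with the dense continuous embedding $\cH^\delta \hookrightarrow \cH^r$. For the stochastic convolution, Lemma~\ref{lem:BDG} gives $\norm{\int_0^t E(t-s)BG(s)\dd W(s)}{L^p(\Omega,\cH^r)}^2 \lesssim \int_0^t \norm{\Theta^{r/2}E(t-s)BG(s)}{\cL_2^0}^2\dd s$; since $\Theta^{r/2}E(t-s)B = E(t-s)B\Lambda^{r/2}$ (using $\Theta^{\alpha/2}B=B\Lambda^{\alpha/2}$ and the commutation of $\Theta$ with $E$), and using \eqref{eq:semigroup_bound} and \eqref{eq:B_bound_2} plus \eqref{eq:schatten_bound_2}, this is bounded by $\int_0^t \norm{\Lambda^{r/2}G(s)}{\cL_2^0}^2\dd s$; writing $\Lambda^{r/2} = \Lambda^{(r-\beta+1)/2}\Lambda^{(\beta-1)/2}$ with $r-\beta+1 \le 1 \le$ ... — more precisely $r \le \beta$ forces $\Lambda^{(r-\beta)/2}$ bounded — and combining the Hölder bound on $\norm{\Lambda^{(\beta-1)/2}G(s)}{\cL_2^0}$ from Assumption~\ref{assumptions:1}\ref{assumptions:1:G} (which, together with the stated bound at one point $t$, gives a uniform bound on $[0,T]$), one obtains finiteness uniformly in $t$.

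The deterministic convolution is the step I expect to require the most care, because the hypothesis on $F$ controls $\Lambda^{\alpha/2}F(s,u)$ only for $\alpha \in \{0,\theta\}$, and the target exponent $r$ may be as large as $1+\theta$, so one cannot simply pull a full $\Theta^{r/2}$ onto $F$. The resolution is to exploit the smoothing in the sine-type operator: using $\Theta^{r/2}E(t-s)B = E(t-s)B\Lambda^{r/2}$ is the wrong split; instead write $\Theta^{r/2}E(t-s)B$ and use \eqref{eq:sine_time_regularity}, which for the relevant (second) component gives $\norm{\Lambda^{(r-\theta)/2}P^1E(t-s)B}{\cL(\dot H^0)} = \norm{S(t-s)\Lambda^{(r-\theta-1)/2}}{\cL(\dot H^0)} \lesssim |t-s|^{\,\theta+1-r}$ provided $0 \le r-\theta \le 1$, i.e. $\theta \le r \le 1+\theta$; this leaves a factor $\Lambda^{\theta/2}$ to act on $F(s,X_1(s))$, which is controlled by the $\alpha=\theta$ growth bound, namely $\norm{\Lambda^{\theta/2}F(s,X_1(s))}{\dot H^0} \lesssim 1 + \norm{\Lambda^{\theta/2}X_1(s)}{\dot H^0} \le 1 + \norm{X(s)}{\cH^\theta}$. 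One then needs $\sup_{s}\norm{X(s)}{L^p(\Omega,\cH^\theta)} < \infty$; since $\theta \le \min(\beta,\delta,1) \le \min(\beta,\delta,1+\theta)$, this is a weaker instance of the very estimate being proved, so a bootstrap is needed. I would handle this cleanly by a two-stage argument: first establish \eqref{eq:spat_reg_X} for $r = \theta$ (here the deterministic convolution needs only $\Lambda^{\theta/2}F$ paired against $\Theta^{\theta/2}E(t-s)B = E(t-s)B\Lambda^{\theta/2}$, bounded uniformly, so the time-singularity is integrable and a Grönwall inequality in $\sup_{s\le t}\norm{X(s)}{L^p(\Omega,\cH^\theta)}$ closes it), and then, with this bound in hand, run the estimate above for general $r \in [\theta, \min(\beta,\delta,1+\theta)]$, noting $\int_0^t |t-s|^{\theta+1-r}\dd s < \infty$ since $\theta+1-r > -1$; for $r < \theta$ the bound follows a fortiori from the embedding $\cH^\theta \hookrightarrow \cH^r$. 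Taking the supremum over $t \in [0,T]$ completes the proof.
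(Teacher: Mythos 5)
Your overall architecture --- verify the Lipschitz and linear-growth conditions on $BF$ and $BG$ in $\cH$ to get existence and uniqueness by a fixed-point argument, then estimate the three terms of the mild solution with a bootstrap in the regularity index --- is the same as the paper's (which delegates the fixed point to a cited existence theorem). The initial-value term and the stochastic convolution are handled correctly, modulo a notational slip: the $\cL_2(H_0,\cH)$-norm of $\Theta^{\frac r2}E(t-s)BG(s)$ equals $\norm{\Lambda^{\frac{r-1}{2}}G(s)}{\cL_2^0}$, not $\norm{\Lambda^{\frac r2}G(s)}{\cL_2^0}$; your mid-sentence correction to the condition ``$r\le\beta$ makes $\Lambda^{\frac{r-\beta}{2}}$ bounded'' is the right one and corresponds to the correct exponent.

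The gap is in the deterministic convolution for $r>1$. You assert that pulling $\Theta^{\frac r2}$ through $B$ ``is the wrong split'' because the hypothesis controls $\Lambda^{\frac\alpha2}F$ only for $\alpha\in\{0,\theta\}$ while $r$ may reach $1+\theta$. This miscounts the derivatives: by \eqref{eq:B_bound_1} and \eqref{eq:semigroup_bound}, $\norm{\Theta^{\frac r2}E(t-s)BF}{\cH}\le\norm{\Theta^{\frac r2}BF}{\cH}=\norm{\Lambda^{\frac{r-1}{2}}F}{\dot{H}^0}$ --- the operator $B$ lands in the second coordinate of $\cH$, which carries the $\dot{H}^{r-1}$-norm, so only $\frac{r-1}{2}\le\frac\theta2$ powers of $\Lambda$ fall on $F$, and the $\alpha=\theta$ growth bound suffices after writing $\Lambda^{\frac{r-1}{2}}=\Lambda^{\frac{r-1-\theta}{2}}\Lambda^{\frac\theta2}$ with the first factor bounded. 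This is exactly the paper's argument, and it controls both components of $X(t)$ simultaneously. Your substitute --- the smoothing estimate \eqref{eq:sine_time_regularity} applied to $P^1E(t-s)B=\Lambda^{-\frac12}S(t-s)$ --- bounds only the contribution to $X_1(t)$ in $\dot{H}^r$; the claim \eqref{eq:spat_reg_X} concerns $\norm{X(t)}{\cH^r}$, whose second component requires $\int_0^t C(t-s)F(s,X_1(s))\dd s$ to be bounded in $\dot{H}^{r-1}$, and the cosine operator has no smoothing, so for that component you are forced back to precisely the direct estimate you rejected. As written the proof therefore does not close for $r\in(1,1+\theta]$. (The smoothing detour is also unnecessary even for $X_1$: the factor $|t-s|^{1+\theta-r}$ is bounded on $[0,T]$, so no singularity is being tamed.) Your two-stage bootstrap --- establish $r=\theta$ first, then general $r$ --- is sound and is a mild variant of the paper's, which first establishes $r\le\min(\beta,\delta,1)$ and then uses $\theta\le\min(\beta,\delta,1)$ to reach $r\le 1+\theta$.
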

	\begin{proof}
		Let $t \in [0,T]$ be fixed. Using the fact that $\Lambda^{-\alpha} \in \cL(\dot{H}^0)$ for any $\alpha \ge 0$, we have by Assumption~\ref{assumptions:1}\ref{assumptions:1:F} and~\eqref{eq:B_bound_1} that for any $x=[x_1, x_2]^\top, y=[y_1, y_2]^\top \in \cH$,
		\begin{equation*}
		\norm{BF(t,x_1)-BF(t,y_1)}{\cH} = \norm{\Lambda^{-\frac{1}{2}} \left(F(t,x_1)-F(t,y_1)\right)}{\dot{H}^0} \lesssim \norm{x - y}{\cH}.
		\end{equation*}
		Similarly, recalling also~\eqref{eq:schatten_bound_1}, 
		\begin{align*}
		\norm{BF(t,x_1)}{\cH} + \norm{BG(t)}{\cL_2(H_0,\cH)} &= \norm{\Lambda^{-\frac{1}{2}}F(t,x_1)}{\dot{H}^0} + \norm{\Lambda^{-\frac{1}{2}}G(t)}{\cL_2^0} \\
		&\lesssim \norm{F(t,x_1)}{\dot{H}^0} +
		\norm{\Lambda^{\frac{\beta-1}{2}} G(t)}{\cL_2^0}   \\ 
		&\lesssim 1 + \norm{x_1}{\dot{H}^0} \le 1 + \norm{x}{\cH}.
		\end{align*}	
		
		The existence and uniqueness of the mild solution~\eqref{eq:mild_solution} now follows from~\cite[Theorem 7.2]{DPZ14} (for $p \ge 2$ and clearly also for $p \in [1,2)$ since $(\Omega,\cA,(\cF_t)_{t \in [0,T]},P)$ is a probability space), which also guarantees that~\eqref{eq:spat_reg_X} holds for $r=0$. The case $r<0$ follows immediately. To show~\eqref{eq:spat_reg_X} for $0<r\le\min(\beta,\delta,1)$, we first note that 
		\begin{align*}
		\norm{X(t)}{L^p(\Omega,\cH^r)} &= \norm{\Theta^{\frac{r}{2}} X(t)}{L^p(\Omega,\cH)} \\ 
		&\le \norm{E(t)\Theta^{\frac{r}{2}}x_0}{\cH} + \int^t_0 \norm{E(t-s) \Theta^{\frac{r}{2}} B F(s,X_1(s))}{L^p(\Omega,\cH)} \dd s \\&\quad+ \lrnorm{\int^t_0 E(t-s) \Theta^{\frac{r}{2}}B G(s) \dd W(s)}{L^p(\Omega,\cH)}.
		\end{align*}
		For the first term, \eqref{eq:semigroup_bound} and Assumption~\ref{assumptions:1}\ref{assumptions:1:x0} imply
		\begin{equation*}
		\norm{E(t)\Theta^{\frac{r}{2}}x_0}{\cH} \le \norm{x_0}{\cH^r}  \lesssim 1.
		\end{equation*}
		Next, we first note that since $r\le1$, by~\eqref{eq:semigroup_bound}, \eqref{eq:B_bound_1}, Assumption~\ref{assumptions:1}\ref{assumptions:1:F} and~\eqref{eq:spat_reg_X} with $r=0$,
		\begin{equation}
		\label{eq:X_spat_reg_F_bound}
		\begin{split}
		&\int^t_0 \norm{E(t-s) \Theta^{\frac{r}{2}} B F(s,X_1(s))}{L^p(\Omega,\cH)} \dd s \\
		&\quad\le t \sup_{s \in [0,T]} \norm{\Theta^{\frac{r}{2}} B F(s,X_1(s))}{L^p(\Omega,\cH)} = t \sup_{s \in [0,T]} \norm{\Lambda^{\frac{r-1}{2}} F(s,X_1(s))}{L^p(\Omega,\dot{H}^0)} \\
		&\quad\lesssim \sup_{s \in [0,T]} \norm{ F(s,X_1(s))}{L^p(\Omega,\dot{H}^0)} \lesssim 1 + \sup_{s \in [0,T]} \norm{X_1(s)}{L^p(\Omega,\dot{H}^0)} \\
		&\quad\le 1 + \sup_{s \in [0,T]} \norm{X(s)}{L^p(\Omega,\cH)} \lesssim 1.
		\end{split}
		\end{equation}
		For the third term, by analogous arguments, Assumption~\ref{assumptions:1}\ref{assumptions:1:G} and Lemma~\ref{lem:BDG} (note that the integrand below is deterministic),
		\begin{align*}
		&\lrnorm{\int^t_0 E(t-s) \Theta^{\frac{r}{2}}B G(s) \dd W(s)}{L^p(\Omega,\cH)} \\
		&\quad\lesssim \left(\int^t_0 \norm{E(t-s) \Theta^{\frac{r}{2}}B G(s)}{\cL_2(H_0,\cH)}^2 \dd s\right)^{\frac{1}{2}} \lesssim  t^{\frac{1}{2}} \sup_{s \in [0,T]} \norm{E(t-s) \Theta^{\frac{r}{2}}B G(s)}{\cL_2(H_0,\cH)} \\
		&\quad\lesssim \sup_{s \in [0,T]} \norm{\Lambda^{\frac{r-1}{2}} G(s)}{\cL_2^0} \lesssim 1.
		\end{align*} 
		Altogether, this shows~\eqref{eq:spat_reg_X} for $0\le r\le\min(\beta,\delta,1)$. Finally, for the case $r \in (1,\min(\beta,\delta,1+\theta)]$ we repeat the arguments above, replacing the calculation in~\eqref{eq:X_spat_reg_F_bound} with
		\begin{align*}
		&\sup_{s \in [0,T]} \norm{\Lambda^{\frac{r-1}{2}} F(s,X_1(s))}{L^p(\Omega,\dot{H}^0)}\\
		&\quad
		\lesssim
		\sup_{s \in [0,T]} \norm{\Lambda^{\frac{\theta}{2}} F(s,X_1(s))}{L^p(\Omega,\dot{H}^0)}
		\lesssim 1 + \sup_{s \in [0,T]} \norm{\Lambda^{\frac{\theta}{2}}X_1(s)}{L^p(\Omega,\dot{H}^0)} \\
		&\quad\le 1 + \sup_{s \in [0,T]} \norm{\Theta^{\frac{\min(\beta,\delta,1)}{2}}X(s)}{L^p(\Omega,\cH)},
		\end{align*}
		which is finite since we have shown that~\eqref{eq:spat_reg_X} holds with $r=\min(\beta,\delta,1)$
		and by assumption $\theta \le \min(\beta,\delta,1)$.
	\end{proof}
	
	From here on we denote by $r = \min(\beta,\delta,1+\theta)$ the maximum spatial regularity of the solution to~\eqref{eq:ito_wave_equation}. A temporal regularity result finishes this section of the paper.
	
	\begin{theorem}
		\label{thm:mild_holder}
		Let Assumption~\ref{assumptions:1} be satisfied and let $r = \min(\beta,\delta,1+\theta)$. Then, for all $\alpha \le r$, $p \ge 1$, there exists a positive constant $C$ such that for all $s,t \in [0,T]$, 
		\begin{equation}
		\label{eq:mild_holder_full}
		\norm{X(t)-X(s)}{L^p(\Omega,\cH^{\alpha})} \le C |t-s|^{\min(r-\alpha,\frac{1}{2})}.
		\end{equation}
		and
		\begin{equation}
		\label{eq:mild_holder_x1}
		\norm{X_1(t)-X_1(s)}{L^p(\Omega,\dot{H}^{\alpha})} \le C |t-s|^{\min(r-\alpha,1)}.
		\end{equation}
	\end{theorem}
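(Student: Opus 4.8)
The starting point is the mild solution formula~\eqref{eq:mild_solution}. Assume without loss of generality that $0\le s\le t\le T$ and split $X(t)-X(s)$ into the three increments coming from the initial datum, the deterministic convolution and the stochastic convolution. For each of the two convolution increments write $\int_0^t-\int_0^s=\int_s^t+\int_0^s$ and, in the $\int_0^s$-part, use $E(t-\sigma)-E(s-\sigma)=(E(t-s)-I)E(s-\sigma)$; since $E(t-s)-I$ does not depend on $\sigma$ it may be pulled in front of both the Bochner and the It\^o integral. The ingredients are: the contraction bound~\eqref{eq:semigroup_bound}; the commutations $\Theta^{\frac{\gamma}{2}}E(\cdot)=E(\cdot)\Theta^{\frac{\gamma}{2}}$, $\Theta^{\frac{\gamma}{2}}B=B\Lambda^{\frac{\gamma}{2}}$ and $\Lambda^{\frac{\gamma}{2}}P^1=P^1\Theta^{\frac{\gamma}{2}}$, together with~\eqref{eq:B_bound_1}, which gives $\norm{\Theta^{\frac{\gamma}{2}}Bv}{\cH}=\norm{v}{\dot{H}^{\gamma-1}}$; the smoothing estimate~\eqref{eq:semigroup_time_regularity}, i.e.\ $\norm{(E(\tau)-I)\Theta^{-\frac{\gamma}{2}}}{\cL(\cH)}\lesssim|\tau|^{\gamma}$ for $\gamma\in[0,1]$; the Burkholder--Davis--Gundy inequality (Lemma~\ref{lem:BDG}) and Minkowski's integral inequality; and, from Theorem~\ref{thm:X_spat_reg} and Assumption~\ref{assumptions:1}, the uniform bounds $\sup_{\sigma\in[0,T]}\norm{F(\sigma,X_1(\sigma))}{L^p(\Omega,\dot{H}^{\theta})}<\infty$ (here $\theta\le r$ and the linear growth of $F$ are used) and $\sup_{\sigma\in[0,T]}\norm{\Lambda^{\frac{\beta-1}{2}}G(\sigma)}{\cL^0_2}<\infty$ (which follows from Assumption~\ref{assumptions:1}\ref{assumptions:1:G}). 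Since $|t-s|\le T$, any bound $C|t-s|^{\rho}$ with $\rho\ge\min(r-\alpha,\tfrac12)$ (resp.\ $\rho\ge\min(r-\alpha,1)$) can be absorbed into $C|t-s|^{\min(r-\alpha,1/2)}$ (resp.\ $C|t-s|^{\min(r-\alpha,1)}$).

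For~\eqref{eq:mild_holder_full} I apply $\Theta^{\frac{\alpha}{2}}$ and estimate in $\cH$. The initial increment equals $(E(t-s)-I)E(s)\Theta^{\frac{\alpha}{2}}x_0$, bounded by $\norm{(E(t-s)-I)\Theta^{-\frac{\gamma}{2}}}{\cL(\cH)}\norm{x_0}{\cH^{\alpha+\gamma}}\lesssim|t-s|^{\min(\delta-\alpha,1)}$ on choosing $\gamma=\min(\delta-\alpha,1)$ and recalling $x_0\in\cH^{\delta}$. In the deterministic convolution, the $\int_s^t$-part has integrand bounded in $L^p(\Omega,\cH)$ by $\norm{F(\sigma,X_1(\sigma))}{\dot{H}^{\alpha-1}}\lesssim1$ (using $\alpha-1\le r-1\le\theta$), whence $\lesssim|t-s|$; the $\int_0^s$-part is $\lesssim\norm{(E(t-s)-I)\Theta^{-\frac{\gamma}{2}}}{\cL(\cH)}\int_0^s\norm{F(\sigma,X_1(\sigma))}{L^p(\Omega,\dot{H}^{\gamma+\alpha-1})}\dd\sigma\lesssim|t-s|^{\min(1+\theta-\alpha,1)}$ with $\gamma=\min(1+\theta-\alpha,1)$. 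In the stochastic convolution, Lemma~\ref{lem:BDG} bounds the $\int_s^t$-part by $(\int_s^t\norm{\Lambda^{\frac{\alpha-1}{2}}G(\sigma)}{\cL^0_2}^2\dd\sigma)^{1/2}\lesssim|t-s|^{1/2}$ (here $\alpha\le\beta$ absorbs $\Lambda^{\frac{\alpha-\beta}{2}}\in\cL(\dot{H}^0)$) and the $\int_0^s$-part by $\norm{(E(t-s)-I)\Theta^{-\frac{\gamma}{2}}}{\cL(\cH)}\big(\int_0^s\norm{\Lambda^{\frac{\gamma+\alpha-1}{2}}G(\sigma)}{\cL^0_2}^2\dd\sigma\big)^{1/2}\lesssim|t-s|^{\min(\beta-\alpha,1)}$ with $\gamma=\min(\beta-\alpha,1)$. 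Since $\delta,\beta,1+\theta\ge r$, all four exponents are $\ge\min(r-\alpha,\tfrac12)$, and summing proves~\eqref{eq:mild_holder_full}.

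The proof of~\eqref{eq:mild_holder_x1} runs along identical lines with $P^1$ inserted, using $\norm{P^1}{\cL(\cH,\dot{H}^0)}=1$ and $P^1E(\tau)B=\Lambda^{-\frac12}S(\tau)$. Every term except the $\int_s^t$-part of the stochastic convolution is handled exactly as above, with exponents $\ge\min(r-\alpha,1)$. For that one remaining term the crude estimate yields only $|t-s|^{1/2}$, which is too weak, so I invoke the sine smoothing~\eqref{eq:sine_time_regularity}: writing $\Lambda^{\frac{\alpha-1}{2}}=\Lambda^{\frac{\alpha-\beta}{2}}\Lambda^{\frac{\beta-1}{2}}$ and using $\norm{S(\tau)\Lambda^{\frac{\alpha-\beta}{2}}}{\cL(\dot{H}^0)}\lesssim|\tau|^{\min(\beta-\alpha,1)}$ --- which for $\beta-1\le\alpha\le\beta$ is~\eqref{eq:sine_time_regularity} with parameter $1+\alpha-\beta\in[0,1]$, and for $\alpha<\beta-1$ follows by factoring off $\Lambda^{-\frac12}$ and a bounded nonpositive power of $\Lambda$ --- gives $\norm{\Lambda^{\frac{\alpha-1}{2}}S(t-\sigma)G(\sigma)}{\cL^0_2}\lesssim|t-\sigma|^{\min(\beta-\alpha,1)}$, so that by Lemma~\ref{lem:BDG} this term is $\lesssim\big(\int_s^t|t-\sigma|^{2\min(\beta-\alpha,1)}\dd\sigma\big)^{1/2}\lesssim|t-s|^{\min(\beta-\alpha,1)+1/2}$. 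Because $\min(\beta-\alpha,1)+\tfrac12\ge\min(r-\alpha,1)$, collecting all terms gives~\eqref{eq:mild_holder_x1}.

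I expect the crux to be this last step: the extra half-power of temporal regularity enjoyed by the position component $X_1$ (compared with $X$ in $\cH^{\alpha}$, where the velocity component $\dot u$ is the bottleneck) is bought by trading the limited spatial smoothness available on $G$ --- only $\beta-1$ orders --- against the smoothing of the sine operator and then integrating the resulting integrable singularity over the short interval $[s,t]$; the naive bound loses this half-power. Everything else is careful but routine bookkeeping, the recurring point being to check in each term that the spatial order demanded of $x_0$ (at most $\delta$), of $G$ (at most $\beta-1$) and of $F$ (at most $\theta$) stays within what Assumption~\ref{assumptions:1} and Theorem~\ref{thm:X_spat_reg} provide --- which is exactly why the H\"older exponents are governed by $r=\min(\beta,\delta,1+\theta)$ and capped at $1$, respectively $\tfrac12$.
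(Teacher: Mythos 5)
Your proof is correct and takes essentially the same route as the paper: where the paper writes $X(t)-X(s)=(E(t-s)-I)X(s)+\int_s^t\cdots$ and bounds the first term in one stroke via \eqref{eq:semigroup_time_regularity} together with the spatial regularity of Theorem~\ref{thm:X_spat_reg}, you re-expand $X(s)$ through the mild formula and estimate the three resulting pieces separately, which yields the exponents $\min(\delta-\alpha,1)$, $\min(1+\theta-\alpha,1)$ and $\min(\beta-\alpha,1)$ that recombine to the paper's $\min(r-\alpha,1)$. The decisive step --- gaining the extra half power for $X_1$ by applying the sine smoothing \eqref{eq:sine_time_regularity} to the $\int_s^t$ stochastic convolution and integrating the resulting singularity --- coincides exactly with the paper's argument.
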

	\begin{proof}
		Fix $0 \le s \le t \le T$. We first note that 
		\begin{align*}
		\Theta^{\frac{\alpha}{2}} \left(X(t) - X(s) \right) &= \Theta^{\frac{\alpha}{2}} (E(t-s) - I) X(s) + \int^t_s \Theta^{\frac{\alpha}{2}} E(t-r) BF(r,X_1(r)) \dd r \\
		&\quad+ \int^t_s \Theta^{\frac{\alpha}{2}} E(t-r) B G(r) \dd W(r),
		\end{align*}
		so that therefore
		\begin{equation}
		\label{eq:mild_holder_error_split}
		\begin{split}
		\norm{X(t) - X(s)}{L^p(\Omega,\cH^\alpha)}
		&= \norm{\Theta^{\frac{\alpha}{2}} \left(X(t) - X(s) \right)}{L^p(\Omega,\cH)} \\ &\le \norm{\Theta^{\frac{\alpha}{2}} (E(t-s) - I) X(s)}{L^p(\Omega,\cH)} 
		\\ &\quad+ \lrnorm{\int^t_s \Theta^{\frac{\alpha}{2}} E(t-r) BF(r,X_1(r)) \dd r}{L^p(\Omega,\cH)} \\ 
		&\quad+ \lrnorm{\int^t_s \Theta^{\frac{\alpha}{2}} E(t-r) B G(r) \dd W(r)}{L^p(\Omega,\cH)}.
		\end{split}
		\end{equation}
		By~\eqref{eq:semigroup_time_regularity} and Theorem~\ref{thm:X_spat_reg} the first term on the right hand side of~\eqref{eq:mild_holder_error_split} is bounded by
		\begin{equation*}
		\norm{\Theta^{\frac{\alpha-r}{2}}(E(t-s) - I)}{\cL(\cH)} \norm{\Theta^{\frac{r}{2}} X(s)}{L^p(\Omega,\cH)} \lesssim (t-s)^{\min(r-\alpha,1)}.
		\end{equation*}
		For the second term, we have, since $\alpha \le r \le 1+\theta$, by~\eqref{eq:B_bound_1},~\eqref{eq:semigroup_bound},  Assumption~\ref{assumptions:1}\ref{assumptions:1:F} and Theorem~\ref{thm:X_spat_reg},
		\begin{align*}
		&\lrnorm{\int^t_s \Theta^{\frac{\alpha}{2}} E(t-r) BF(r,X_1(r)) \dd r}{L^p(\Omega,\cH)} \\
		&\quad\le \int^t_s \norm{E(t-r) \Theta^{\frac{\alpha}{2}} BF(r,X_1(r))}{L^p(\Omega,\cH)} \dd r \\
		&\quad\le \int^t_s \norm{E(t-r) }{\cL(\cH)} \norm{\Lambda^{\frac{\alpha-1}{2}} F(r,X_1(r))}{L^p(\Omega,\dot{H}^0)}\dd r \\
		&\quad\lesssim (t-s) \sup_{r \in [0,T]} \norm{\Lambda^{\frac{\theta}{2}} F(r,X_1(r))}{L^p(\Omega,\dot{H}^0)}  \lesssim \left(1 + \sup_{r \in [0,T]} \norm{\Lambda^{\frac{\theta}{2}} X_1(r)}{L^p(\Omega,\dot{H}^0)} \right) (t-s).
		\end{align*}
		Similarly, Lemma~\ref{lem:BDG} yields that the third term on the right hand side of~\eqref{eq:mild_holder_error_split} is bounded by a constant times
		\begin{align*}
		&\left( \int^t_s \norm{E(t-r)\Theta^{\frac{\alpha}{2}}BG(r)}{\cL_2(H_0,\cH)}^2 \dd r \right)^{\frac{1}{2}} \\
		&\quad\le (t-s)^{\frac{1}{2}} \sup_{r \in [0,T]} \norm{\Theta^{\frac{\alpha}{2}}BG(r)}{\cL_2(H_0,\cH)} = (t-s)^{\frac{1}{2}} \sup_{r \in [0,T]} \norm{\Lambda^{\frac{\alpha-1}{2}}G(r)}{\cL_2^0} \\
		&\quad\lesssim (t-s)^{\frac{1}{2}} \sup_{r \in [0,T]} \norm{\Lambda^{\frac{\beta-1}{2}} G(r)}{\cL_2^0} \lesssim (t-s)^{\frac{1}{2}},
		\end{align*}
		which completes the proof of~\eqref{eq:mild_holder_full}. The proof of~\eqref{eq:mild_holder_x1} is entirely similar, except for the analysis of the stochastic term. By~\eqref{eq:sine_time_regularity}, it satisfies
		\begin{align*}
		&\left( \int^t_s \norm{P^1 E(t-r)B\Lambda^{\frac{\alpha}{2}}G(r)}{\cL_2^0}^2 \dd r \right)^{\frac{1}{2}} \\
		&\quad= \left( \int^t_s \norm{P^1 E(t-r)B\Lambda^{\frac{1+\alpha-\beta}{2}}\Lambda^{\frac{\beta-1}{2}}G(r)}{\cL_2^0}^2 \dd r \right)^{\frac{1}{2}} \\
		&\quad\lesssim \sup_{\tau \in [0,T]} \norm{\Lambda^{\frac{\beta-1}{2}}G(\tau)}{\cL_2^0} \left( \int^t_s (t-r)^{2 \min(\beta - \alpha,1)} \dd r \right)^{\frac{1}{2}} \lesssim (t-s)^{ \min(\beta - \alpha,1)+\frac{1}{2}}
		\end{align*}
		which combined with the previous estimates proves~\eqref{eq:mild_holder_x1} and therefore finishes the proof of the theorem.
	\end{proof}
	
	\section{Approximation and convergence}
	\label{sec:appr_and_conv}
	
	We now consider a more concrete setting by taking $H = \dot{H}^0 = L^2(\cD)$, where $\cD$ denotes a convex polygonal bounded domain in $\R^d$. Let $\Lambda = - \Delta$ be the Laplace operator on $H$ with zero Dirichlet boundary conditions. With this, the spaces $(\dot{H}^{\alpha})_{\alpha \in \R}$ are related to classical Sobolev spaces by $\dot{H}^0 = L^2(\cD),$ $\dot{H}^1 = H^1_0(\cD)$ and $\dot{H}^2 = D(\Lambda) = H^2(\cD) \cap H^1_0(\cD)$, where $H^n(\cD)$ denotes the Sobolev space of order $n \in \N$ on $\cD$ and $H_0^1(\cD)$ is the subspace of functions in $H^1(\cD)$ that are zero on the boundary of $\cD$ (see also Appendix~\ref{sec:appendix}). Next, we introduce our fully discrete approximation of the solution to~\eqref{eq:ito_wave_equation}. For the spatial discretization, a standard continuous finite element method is employed and for the temporal discretization, a rational approximation of the semigroup. This is the same approach as in~\cite[Section 5]{KLS15} to which the reader is referred for further details, but see also~\cite{KLL12} and~\cite{KLL13}. We then show a strong and a weak convergence result for this approximation.
	
	To be precise, we take $(V^\kappa_h)_{h \in (0,1]} \subset \dot{H}^1$, $\kappa \in \{2,3\}$, to be a standard family of finite element function spaces consisting of continuous piecewise polynomials of degree $\kappa-1$, with respect to a regular family of triangulations of $\cD$ with maximal mesh size $h$, that are zero on the boundary of $\cD$. They are equipped with the inner product $\inpro[V_h^\kappa]{\cdot}{\cdot} = \inpro[\dot{H}^0]{\cdot}{\cdot}$. On this space, let a discrete counterpart $\Lambda_h\colon V^\kappa_h \to V^\kappa_h$ to $\Lambda$ be defined by 
	\begin{equation*}
	\inpro[\dot{H}^0]{\Lambda_h v_h}{u_h} = \inpro[\dot{H}^0]{\Lambda^{\frac{1}{2}} v_h}{\Lambda^{\frac{1}{2}}u_h} = \inpro[\dot{H}^1]{v_h}{u_h}
	\end{equation*}
	for all $v_h, u_h \in V^\kappa_h$. Fractional powers of $\Lambda_h$ are defined in the same way as for $\Lambda$. We define the generalized orthogonal projector $P_h \colon \dot{H}^{-1} \to V^\kappa_h$ by $\inpro{P_h v}{v_h} = {_{\dot{H}^{-1}}\langle}
	v, v_h \rangle_{\dot{H}^{1}}$ for all $v \in \dot{H}^{-1}$ and $v_h \in V^\kappa_h$, where $_{\dot{H}^{-1}}\langle
	\cdot, \cdot \rangle_{\dot{H}^{1}}$ denotes the dual pairing with respect to $\dot{H}^0$. Note that $P_h$ coincides with the usual orthogonal projector when restricted to $\dot{H}^0$. For our convergence results, we need the following assumption on $(V_h^\kappa)_{h\in(0,1]}$.
	\begin{assumption}
		\label{assumptions:FEM}
		There exists a constant $C > 0$ such that, for all $h \in (0,1]$, the operators $\Lambda_h$ and $P_h$ satisfy
		\begin{align}
		\label{eq:inv_ineq}
		\big\|\Lambda_h^{\frac \alpha 2}P_h\big\|_{\cL(\dot{H}^0)}
		&\leq
		C h^{-\alpha},
		\quad
		\alpha \in [0,2], % For quasi-uniform, Thomee (3.28) 
		\\
		\label{eq:Lambda_norm_equivalence_1}
		\big\|
		\Lambda_h^{\frac \alpha 2}P_h v
		\big\|_{\dot{H}^0}
		&\leq
		C
		\big\|
		\Lambda^{\frac \alpha 2}v
		\big\|_{\dot{H}^0},
		\quad
		v \in \dot H^{\alpha}, \alpha \in [-1,1]. % For quasi-uniform, Kruse (3.17)
		\end{align}
	\end{assumption} 
	In our setting, this assumption is fulfilled if the mesh underlying $V_h^\kappa$ is quasi-uniform, see~\cite[(3.28)]{T06} and~\cite[(3.17)]{K14}. The counterpart to this assumption, that there exists a constant $C > 0$ such that
	\begin{equation}
	\label{eq:Lambda_norm_equivalence_2}
	\norm{\Lambda^\frac{\alpha}{2}v_h}{\dot{H}^0} \le C \norm{\Lambda_h^\frac{\alpha}{2}v_h}{\dot{H}^0},
	\quad
	v_h \in V_h^\kappa, \alpha \in [-1,1],
	\end{equation}
	holds without the assumption of quasi-uniformity. A combination of ~\eqref{eq:inv_ineq} and~\eqref{eq:Lambda_norm_equivalence_1} yields, for $v \in \dot{H}^0$ and $\alpha \in [0,1]$, 
	\begin{equation}
	\label{eq:Lambda_h_inv_ineq}
	\norm{P_h \Lambda^{\frac{\alpha}{2}} v}{\dot{H}^0} = \norm{\Lambda_h^{\frac{\alpha}{2}} \Lambda_h^{-\frac{\alpha}{2}}P_h \Lambda^{\frac{\alpha}{2}} v}{\dot{H}^0} \le C h^{-\alpha} \norm{\Lambda_h^{-\frac{\alpha}{2}}P_h \Lambda^{\frac{\alpha}{2}} v}{\dot{H}^0} \lesssim h^{-\alpha} \norm{v}{\dot{H}^0},
	\end{equation}
	where we have used the fact that $\Lambda^{\frac{\alpha}{2}} v \in \dot{H}^{-\alpha}$. 
	
	Let
	\begin{equation*}
	A_h = \left[\begin{array}{cc}
	0 &- I \\
	\Lambda_h &0
	\end{array}\right]
	\end{equation*}
	be a discrete counterpart to $A$ on the product space $\cV_h^\kappa = V^\kappa_h \oplus V^\kappa_h$, equipped with the same inner product as $\cH$. With some abuse of notation, by the expression $P_h v$, $v = [v_1, v_2]^\top \in \cH$, we denote the element $[P_h v_1, P_h v_2]^\top \in \cV_h^\kappa$. The operator $-A_h$, similarly to $-A$, generates a  $C_0$-(semi)group $E_h\colon \R \to \cL(\cV_h^\kappa)$. 
	
	Let
	\begin{equation*}
	\Theta_h^{\frac{\alpha}{2}} = \left[\begin{array}{cc}
	\Lambda_h^{\frac{\alpha}{2}} &0 \\
	0 & \Lambda_h^{\frac{\alpha}{2}}
	\end{array}\right] \in \cL(\cV_h^\kappa)
	\end{equation*}
	and note that, as a straightforward consequence of~\eqref{eq:Lambda_norm_equivalence_1} and~\eqref{eq:Lambda_norm_equivalence_2}, for every $\alpha \in [0,1]$ there exists a constant $C > 0$ such that for all $h \in (0,1]$, $v_h \in \cV^\kappa_h$ and $v \in \cH^\alpha$,
	\begin{equation}
	\label{eq:Theta_norm_equivalence}
	\begin{split}
	\norm{\Theta^\frac{\alpha}{2}v_h}{\cH} &\le C \norm{\Theta_h^\frac{\alpha}{2}v_h}{\cH}, \\
	\norm{\Theta_h^\frac{\alpha}{2} P_h v}{\cH} &\le C \norm{\Theta^\frac{\alpha}{2}v}{\cH},
	\end{split}
	\end{equation}
	and, using~\eqref{eq:Lambda_h_inv_ineq}, one shows that there exists a constant $C>0$ such that for all $h \in (0,1]$ and $v \in \cH$,
	\begin{equation}
	\label{eq:Theta_h_inv_ineq}
	\norm{P_h \Theta^{\frac{\alpha}{2}} v}{\cH} \le C h^{-\alpha}\norm{v}{\cH}.
	\end{equation}
	
	For the temporal discretization, consider a uniform time grid $t_j = j \Delta t = j (T/N_{\Delta t})$, $j=0, \ldots, N_{\Delta t}$. Let $R\colon \C \to \C$ be a rational function such that $|R(iy)|\le1$ for all $y \in \R$ and, for some $\rho \in \N$ and $C,b > 0$, $|R(iy)-e^{-iy}| \le C |y|^{\rho+1}$ for all $y \in \R$ with $|y|\le b$, where $i = \sqrt{-1}$. Several classes of such functions are described in~\cite[Section~4]{BB79}. They include certain Pad\'e approximations of the exponential function, for example, the one corresponding to the backward Euler scheme ($R(z)=1/(1+z)$) with order $\rho = 1$ and the one corresponding to the Crank--Nicolson scheme ($R(z)=(1-z/2)/(1+z/2)$) with order $\rho=2$. We write $E^n_{h,\Delta t} = R(\Delta t A_h)^n$ for the rational approximation of the operator $E_h(t_n)$. The Crank--Nicolson approximation given by
		\begin{equation*}
		R(\Delta t A_h) = \frac{1 - \frac{1}{2}\Delta t A_h }{1 + \frac{1}{2}\Delta t A_h }
		\end{equation*} is of particular importance for the wave equation as it preserves the energy. We define the interpolation $\tilde{E}\colon [0,T] \to \cL(\cH)$ of the approximation by the step function
	\begin{equation}
	\label{eq:semigroup_approximation_interpolation}
	\tilde{E}(t) = \chi_{\{0\}}(t) P_h + \sum_{j=1}^{N_{\Delta t}} \chi_{(t_{j-1},t_j]}(t) E^j_{h,\Delta t} P_h,
	\end{equation}
	for $t \in [0,T]$, where $\chi$ denotes the indicator function. For this interpolation, the stability result
	\begin{equation}
	\label{eq:fully_discrete_stability}
	\norm{\tilde{E}(t)}{\cL(\cH)} \le 1
	\end{equation} 
	for all $t \in [0,T]$ holds uniformly in $h, \Delta t \in (0,1]$, see, e.g., \cite{KLL13}. Moreover, the following error estimate with respect to its first component holds. 
	\begin{lemma}[{\cite[Lemma 5.2]{KLS15}}]
		\label{lem:semigroup_error}
		Let $\alpha \ge 0$ and assume that $\tilde{E}$ is given by~\eqref{eq:semigroup_approximation_interpolation} for a rational approximation $E_{h,\Delta t}$ of order $\rho \in \N$ of $E_h(\Delta t)$ for $\Delta t \le 1$. Then there exists a constant $C>0$ such that, for all $h, \Delta t \in (0,1]$,
		\begin{align*}
		&\sup_{t \in [0,T]} \left( \norm{P^1(\tilde{E}(t)-E(t))\Theta^{-\frac{\alpha}{2}}}{\cL(\cH,\dot{H}^0)} +  \norm{\Lambda^{-\frac{\alpha}{4}}P^1(\tilde{E}(t)-E(t))B\Lambda^{\frac{1}{2}-\frac{\alpha}{4}}}{\cL(\dot{H}^0)} \right) \\
		&\hspace{35mm}\le C \left(h^{\min(\alpha \frac{\kappa}{\kappa+1},\kappa)} + {\Delta t}^{\min(\alpha \frac{\rho}{\rho+1},1)} \right).
		\end{align*}
	\end{lemma}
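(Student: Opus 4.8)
The plan is to separate the spatial and temporal discretisation errors, to treat the spatial part with classical nonsmooth-data finite element estimates for the deterministic wave equation, and to treat the temporal part by diagonalising $\Lambda_h$ and reducing to a scalar estimate for the rational function $R$. Fix $t \in (t_{j-1},t_j]$, so that $\tilde{E}(t) = E^j_{h,\Delta t} P_h$, and write
\begin{equation*}
\tilde{E}(t) - E(t) = \underbrace{\big( E^j_{h,\Delta t} - E_h(t_j) \big) P_h}_{=:\, \Psi_1} + \underbrace{\big( E_h(t_j) P_h - E(t_j) \big)}_{=:\, \Psi_2} + \underbrace{\big( E(t_j) - E(t) \big)}_{=:\, \Psi_3}.
\end{equation*}
The term $\Psi_3$ is harmless: since $|t_j - t| \le \Delta t$, applying~\eqref{eq:semigroup_time_regularity} (together with the boundedness of $\Theta^{-\gamma/2}$ on $\cH$ for $\gamma \ge 0$, to reach $\alpha > 1$) bounds both target quantities by a constant times $\Delta t^{\min(\alpha,1)} \le \Delta t^{\min(\alpha \rho/(\rho+1),1)}$; for the second target quantity one uses that $\Lambda^{-\alpha/4} P^1(E(t_j)-E(t)) B \Lambda^{1/2-\alpha/4}$ equals $\Lambda^{-\alpha/2}\big( \sin(t_j \Lambda^{1/2}) - \sin(t \Lambda^{1/2}) \big)$, estimated mode-wise via $|\sin a - \sin b| \le \min(2,|a-b|)$.

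For the spatial term $\Psi_2$ I would write out $P^1 E_h(t) P_h = [\, \cos(t\Lambda_h^{1/2}) P_h,\ \Lambda_h^{-1/2}\sin(t\Lambda_h^{1/2}) P_h \,]$ and $P^1 E(t) = [\, \cos(t\Lambda^{1/2}),\ \Lambda^{-1/2}\sin(t\Lambda^{1/2}) \,]$, and use $P^1 E(t) B = \Lambda^{-1/2}\sin(t\Lambda^{1/2})$, thereby reducing both target quantities to weighted $\cL(\dot{H}^0)$-operator-norm bounds for the discrete cosine operator $\cos(t\Lambda_h^{1/2})P_h - \cos(t\Lambda^{1/2})$ and the discrete sine operator $\Lambda_h^{-1/2}\sin(t\Lambda_h^{1/2})P_h - \Lambda^{-1/2}\sin(t\Lambda^{1/2})$. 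These are the standard nonsmooth-data error estimates for the Galerkin finite element approximation of the wave equation (cf.~\cite{KLL12,KLL13,K14}): the smooth-data bound has order $h^\kappa$ but, in contrast with the parabolic/elliptic case, requires the data to be measured one order higher — in $\dot{H}^{\kappa+1}$ for the cosine operator — which is precisely what produces the factor $\kappa/(\kappa+1)$ after interpolation against the stability bound~\eqref{eq:fully_discrete_stability} (of order $h^0$). For the second target quantity the symmetric placement of the weights $\Lambda^{-\alpha/4}$ and $\Lambda^{1/2-\alpha/4}$ around the propagator reflects the splitting of the available regularity between the two sides, and the same interpolation gives $h^{\min(\alpha \kappa/(\kappa+1),\kappa)}$; throughout, Assumption~\ref{assumptions:FEM} (via~\eqref{eq:Lambda_norm_equivalence_1},~\eqref{eq:Lambda_norm_equivalence_2},~\eqref{eq:Lambda_h_inv_ineq},~\eqref{eq:Theta_norm_equivalence}) is invoked to pass between $\Lambda$- and $\Lambda_h$-weighted norms.

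For the temporal term $\Psi_1$, note that $A_h^2 = -\Theta_h$, so both $E^j_{h,\Delta t} = R(\Delta t A_h)^j$ and $E_h(t_j)$ can be written in the form $p(\Theta_h) + A_h q(\Theta_h)$ for suitable scalar (rational, resp.\ trigonometric) functions $p,q$; decomposing in the eigenbasis $(e_{h,\ell},\lambda_{h,\ell})_{\ell}$ of $\Lambda_h$, the comparison reduces to the scalar quantity $|R(iy)^j - e^{-ijy}|$ with $y = \Delta t \sqrt{\lambda_{h,\ell}}$. By the hypotheses $|R(iy)| \le 1$ and $|R(iy) - e^{-iy}| \le C|y|^{\rho+1}$ (the latter extending to all $y \in \R$ since $|R(iy)-e^{-iy}| \le 2$), a telescoping argument gives $|R(iy)^j - e^{-ijy}| \le \min\big(2,\, Cj|y|^{\rho+1}\big) \le C\min\big( 1,\, \Delta t^{\rho} \lambda_{h,\ell}^{(\rho+1)/2} \big)$ for $j \Delta t \le T$. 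Feeding in the $\Theta^{-\alpha/2}$-, resp.\ $\Lambda^{-\alpha/4}$/$\Lambda^{1/2-\alpha/4}$-, weights (converted to $\Lambda_h$-weights as above), each mode contributes at most $\lambda_{h,\ell}^{-\alpha/2} \min(1, \Delta t^\rho \lambda_{h,\ell}^{(\rho+1)/2})$, and optimising over $\lambda_{h,\ell} \ge \lambda_{h,1}$ — the worst case sitting near the threshold $\lambda \sim \Delta t^{-2\rho/(\rho+1)}$ when $\alpha \le \rho+1$, and at the bottom of the spectrum otherwise — yields $\Delta t^{\min(\alpha \rho/(\rho+1),1)}$. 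If one prefers instead to split $\Psi_1$ through $E_h(t)$ rather than $E_h(t_j)$, the extra term $(E_h(t_j)-E_h(t))P_h$ is handled by the discrete analogue of the estimate for $\Psi_3$, obtained identically. Summing the three contributions gives the claim.

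The main obstacle I anticipate is the bookkeeping in the spatial step: establishing the weighted nonsmooth-data bounds for the discrete cosine and sine operators over the full range of $\alpha$ — in particular the doubly weighted sine estimate entering the second target quantity, and the passage between $\Lambda$- and $\Lambda_h$-weighted norms beyond the ranges $[-1,1]$ (resp.\ $[0,2]$) in which Assumption~\ref{assumptions:FEM} is stated, which forces either an iteration of those inequalities or the use of the saturation cap at $h^\kappa$. The temporal step, being essentially a one-dimensional computation with the rational function $R$, is routine by comparison.
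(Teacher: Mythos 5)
The paper does not prove this lemma at all: it is imported verbatim from \cite[Lemma~5.2]{KLS15}, so there is no in-paper argument to compare against. Your sketch reconstructs the standard proof strategy used in that reference and its precursors \cite{KLL12,KLL13}: the three-way splitting into rational-approximation error, FEM semidiscretization error, and semigroup time-increment, the spectral/telescoping estimate $|R(iy)^j-e^{-ijy}|\le\min(2,Cj|y|^{\rho+1})$ for the temporal part, and the nonsmooth-data cosine/sine estimates with the characteristic $\kappa/(\kappa+1)$ loss for the spatial part; the two delicate points you flag (the weighted FEM bounds for the discrete sine operator, and moving $\Lambda$-weights past $P_h$ outside the range of Assumption~\ref{assumptions:FEM}) are indeed where the real work in the cited proof lies, and your outline leans on citation rather than proof precisely there. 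As a blind reconstruction it is sound and matches the approach of the source the paper relies on.
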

	
	We now define the fully discrete approximation $(X^j_{h,\Delta t})_{j=0}^{N_{\Delta t}}$ by the recursion scheme
	\begin{equation}
	\label{eq:fully_discrete_scheme}
	X^j_{h, \Delta t} = E_{h,\Delta t} \left( X_{h, \Delta t}^{j-1}  + P_h B F(t_{j-1},P^1 X_{h, \Delta t}^{j-1}) \Delta t + P_h B G(t_{j-1}) \Delta W^j \right)
	\end{equation}
	for $j=1, 2, \ldots, N_{\Delta t}$, where $\Delta W^j = W(t_j)-W(t_{j-1})$, with $X^0_{h, \Delta t} = P_h x_0$. 
	In closed form it is given by the discrete mild solution formulation 
	\begin{equation*}
	%\label{eq:discrete_duhamel}
	X^n_{h, \Delta t} = E_{h,\Delta t}^n x_0 + \Delta t \sum_{j=0}^{n-1} E_{h,\Delta t}^{n-j} P_h B F(t_{j},P^1 X_{h, \Delta t}^{j}) + \sum_{j=0}^{n-1} E_{h,\Delta t}^{n-j} P_h B G(t_{j}) \Delta W^j
	\end{equation*}
	for $n = 0,1,\ldots,N_{\Delta t}$. 
	This we extend to a continuous time process $\tilde{X}\colon \Omega \times [0,T] \to \cV_h^\kappa$ by 
	\begin{equation}
	\label{eq:fully_discrete_mild_solution}
	\tilde{X}(t) = \tilde{E}(t)x_0 + \int^t_0 \tilde{E}(t-s)BF(\dfloor{s}, \tilde{X}_1(\dfloor{s})) \dd s+ \int^t_0 \tilde{E}(t-s)B G(\dfloor{s}) \dd W(s),
	\end{equation}
	for $t \in [0,T]$. Here $\tilde{X}_1 = P^1 \tilde{X}$ and $\dfloor{\cdot} = \floor{\cdot/\Delta t} \Delta t$, where $\floor \cdot$ denotes the floor function. It is straightforward to see that $\tilde{X}(t_j) = X^j_{h, \Delta t}$ $P$-a.s. for all $j=0, 1, \ldots, N_{\Delta t}$.
	
	\subsection{Strong convergence}
	
	Under Assumptions~\ref{assumptions:1} and~\ref{assumptions:FEM}, we now deduce a strong convergence result, i.e., convergence measured in $\norm{\cdot}{L^2(\Omega,\dot{H}^0)}$. For the weak convergence we shall also need strong convergence in a negative norm, for which we need an additional assumption. 
	
	\begin{assumption}
		\label{assumptions:2}
		In addition to the requirements of Assumption~\ref{assumptions:1}, there exist parameters $\mu\in [0,2]$, $\nu \in [\max(\mu - 1,0),\min(r,1)]$ and a constant $C > 0$ such that for every $t \in [0,T]$, $F(t, \cdot) \in \cG_\mathrm{p}^1(\dot{H}^0,\dot{H}^{-\min(\mu,1)})$ and  
		\begin{equation}
		\label{eq:ass2:munu}
		\norm{\Lambda^{-\frac{\mu}{2}} F'(t,u)v}{\dot{H}^0} \le C \left(1+\norm{\Lambda^{\frac{\nu}{2}}u}{\dot{H}^0}\right) \norm{\Lambda^{-\frac{\nu}{2}}v}{\dot{H}^0}
		\end{equation}
		for all $u \in \dot{H}^\nu$ and $v \in \dot{H}^{-\nu}$.
	\end{assumption} 
	Note that, as a consequence of the Lipschitz condition of Assumption~\ref{assumptions:1}\ref{assumptions:1:F} and the fact that $\norm{\cdot}{\dot{H}^{-1}}$ is continuous on $\dot{H}^{-\min(\mu,1)}$, we obtain that for all $t \in [0,T]$ and $u,v \in \dot{H}^0$,
	\begin{equation}
	\label{eq:ass2:derivativebounded}
	\begin{split}
	\norm{\Lambda^{-\frac{1}{2}} F'(t,u)v}{\dot{H}^0} &= \Bignorm{\Lambda^{-\frac{1}{2}} \lim_{\epsilon\to 0} \frac{1}{\epsilon} \left(F(t,u+\epsilon v)-F(t,u)\right)}{\dot{H}^0} \\
	&= \lim_{\epsilon\to 0} \frac{1}{\epsilon} \Bignorm{\Lambda^{-\frac{1}{2}}   \left(F(t,u+\epsilon v)-F(t,u)\right)}{\dot{H}^0} \lesssim \lim_{\epsilon\to 0} \frac{1}{\epsilon} \epsilon \norm{v}{\dot{H}^0} = \norm{v}{\dot{H}^0},
	\end{split}
	\end{equation}
	which, since $\cG_\mathrm{p}^1(\dot{H}^0,\dot{H}^{-\min(\mu,1)}) \subset \cG_\mathrm{p}^1(\dot{H}^0,\dot{H}^{-1})$, implies that $F(t,\cdot) \in \cG^1_\mathrm{b}(\dot{H}^0,\dot{H}^{-1})$.
	
	We also need the following version of Gronwall's lemma, 
	%see~\cite[Chapter~1,Theorem~8.1]{M97} for the continuous version and~\cite[2.2~(9)]{G75} for the discrete.
	see~\cite[2.2~(9)]{G75}. 
	
	\begin{lemma}[Gronwall's lemma]
		\label{lem:Gronwall}
		Let 
		%$C,T  \ge 0$, let $a,b$ be continuous nonnegative functions on $[0,T]$ and let 
		$C > 0$ and let
		$(a_j)_{j=1}^\infty$, $(b_j)_{j=1}^\infty$ be nonnegative sequences. 
		If
		%\begin{equation*}
		%	a(t) \le C + \int_{0}^{t} b(s) a(s) \dd s
		%\end{equation*}
		%for all $t \in [0,T]$, then
		%\begin{equation*}
		%	a(t) \le C \exp\left(\int_{0}^{t} b(s) \dd s\right)
		%\end{equation*}
		%for all $t \in [0,T]$. Similarly, if,
		\begin{equation*}
		a_n \le C + \sum^{n-1}_{j=1} b_j a_j , 
		\end{equation*}
		for all $n \ge 1$ then 
		\begin{equation*}
		a_n \le C \exp \left( \sum^{n-1}_{j=1} b_j \right) 
		\end{equation*}
		for all $n \ge 1$.
	\end{lemma}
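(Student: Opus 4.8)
The plan is to argue by induction on $n$, using only the elementary scalar inequality $1+x \le e^x$, valid for all $x \in \R$; in particular $b_j \le e^{b_j}-1$ since $b_j \ge 0$.

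For the base case $n=1$, the sum $\sum_{j=1}^{0} b_j a_j$ is empty, so the hypothesis reads $a_1 \le C = C\exp(0) = C\exp\bigl(\sum_{j=1}^{0} b_j\bigr)$, which is the claim. For the inductive step, I would assume that $a_j \le C\exp\bigl(\sum_{k=1}^{j-1} b_k\bigr)$ holds for all $j \in \{1,\dots,n\}$ and then estimate, using the hypothesis,
\begin{equation*}
a_{n+1} \le C + \sum_{j=1}^{n} b_j a_j \le C + C\sum_{j=1}^{n} b_j \exp\Bigl(\sum_{k=1}^{j-1} b_k\Bigr).
\end{equation*}
The key point is a telescoping bound: multiplying $b_j \le e^{b_j}-1$ by the positive factor $\exp\bigl(\sum_{k=1}^{j-1}b_k\bigr)$ gives, for each $j$,
\begin{equation*}
b_j \exp\Bigl(\sum_{k=1}^{j-1}b_k\Bigr) \le \exp\Bigl(\sum_{k=1}^{j}b_k\Bigr) - \exp\Bigl(\sum_{k=1}^{j-1}b_k\Bigr),
\end{equation*}
so that summing over $j=1,\dots,n$ and using that the sum telescopes (with the empty sum equal to $0$),
\begin{equation*}
\sum_{j=1}^{n} b_j \exp\Bigl(\sum_{k=1}^{j-1}b_k\Bigr) \le \exp\Bigl(\sum_{k=1}^{n}b_k\Bigr) - 1.
\end{equation*}
Substituting this back yields $a_{n+1} \le C + C\bigl(\exp(\sum_{k=1}^{n}b_k)-1\bigr) = C\exp\bigl(\sum_{k=1}^{n}b_k\bigr) = C\exp\bigl(\sum_{j=1}^{(n+1)-1}b_j\bigr)$, which completes the induction and hence the proof.

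There is no genuine obstacle here; the only care needed is the bookkeeping of the partial sums and the fact that empty sums vanish, which is precisely what makes the base case work. One could alternatively avoid induction by iterating the hypothesis $n$ times and bounding the resulting multinomial expression, but the telescoping argument is cleaner; one may of course also simply invoke~\cite[2.2~(9)]{G75} as indicated in the statement.
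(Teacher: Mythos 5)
Your proof is correct and complete. The paper does not actually prove this lemma --- it only cites \cite[2.2~(9)]{G75} --- so there is no in-paper argument to compare against; your strong induction combined with the telescoping bound $b_j \exp\bigl(\sum_{k=1}^{j-1}b_k\bigr) \le \exp\bigl(\sum_{k=1}^{j}b_k\bigr) - \exp\bigl(\sum_{k=1}^{j-1}b_k\bigr)$ is a standard, self-contained verification of the discrete Gronwall inequality in exactly the form the paper needs (note that the induction hypothesis must indeed be assumed for all $j \le n$, not just $j = n$, which you correctly do). One cosmetic remark: the inequality $x \le e^x - 1$ holds for all real $x$, so the clause ``since $b_j \ge 0$'' is not needed there; nonnegativity of $b_j$ is instead what lets you multiply through the induction hypothesis $a_j \le C\exp\bigl(\sum_{k=1}^{j-1}b_k\bigr)$ by $b_j$ without reversing the inequality.
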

	
	Note that we, for a real-valued sequence $(a_j)_{j=1}^\infty$, use the convention $\sum^0_{j=1} a_j = 0$. With this lemma in place, we are ready to deduce a strong convergence result.
	
	\begin{theorem}[Strong convergence]
		\label{thm:strong_convergence}
		Let $X$ be the mild solution of the stochastic wave equation given by~\eqref{eq:mild_solution}, let $\tilde{X}$ be the fully discrete approximation given by~\eqref{eq:fully_discrete_mild_solution}, let Assumptions~\ref{assumptions:1} and~\ref{assumptions:FEM} hold.
		Then, for all $\alpha \in [0,\min(r,1)]$ and any $p \in [1,\infty)$, 
		\begin{equation}
		\label{eq:fully_discrete_spatial_regularity}
		\sup_{\substack{n \in \{0,1,\ldots,N_{\Delta t}\} \\ h, \Delta t \in (0,1]}} \norm{\tilde X(t_n)}{L^p(\Omega,\cH^\alpha)} < \infty,
		\end{equation}
		and there exists a constant $C>0$ such that for all $h, \Delta t \in (0,1]$
		\begin{equation}
		\label{eq:fully_discrete_strong_converence}
		\sup_{n \in \{0,1,\ldots,N_{\Delta t}\}} \norm{\tilde{X_1}(t_n)-X_1(t_n)}{L^p(\Omega,\dot{H}^{0})} \le C \big(h^{r \frac{\kappa}{\kappa+1}} + {\Delta t}^{\min(r \frac{\rho}{\rho+1},\eta,1)} \big).%,
		\end{equation}
		%	where $r = \min(\beta,\delta,1)$.
		
		If, in addition to this, Assumption~\ref{assumptions:2} also holds, then for any $p \in [1,\infty)$, there exists a constant $C>0$ such that for all $h, \Delta t \in (0,1]$
		\begin{equation}
		\label{eq:fully_discrete_strong_converence_negative_norm}
		\sup_{n \in \{0,1,\ldots,N_{\Delta t}\}} \norm{\tilde{X_1}(t_n)-X_1(t_n)}{L^p(\Omega,\dot{H}^{-\nu})} \le C \big(h^{r' \frac{\kappa}{\kappa+1}} + {\Delta t}^{\min(r' \frac{\rho}{\rho+1},\eta,1)} \big),
		\end{equation}
		where $r' = \min(\max(2\nu,\beta),\max(2\nu,1+\theta),\delta)$.
	\end{theorem}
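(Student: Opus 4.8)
\emph{Proof idea.} Throughout, the plan is to work with the discrete mild representation~\eqref{eq:fully_discrete_mild_solution}, the stability bound~\eqref{eq:fully_discrete_stability}, the commutation of $\Theta_h^{\frac\alpha2}$ with $E_{h,\Delta t}^j$, and the norm equivalences of Assumption~\ref{assumptions:FEM}, proceeding exactly as in the proofs of Theorems~\ref{thm:X_spat_reg} and~\ref{thm:mild_holder}. For~\eqref{eq:fully_discrete_spatial_regularity} I would apply $\Theta_h^{\frac\alpha2}$ to~\eqref{eq:fully_discrete_mild_solution} at $t=t_n$ and pass back to $\norm{\cdot}{\cH}$ via~\eqref{eq:Theta_norm_equivalence}: the initial-value term is controlled by $\norm{\Theta_h^{\frac\alpha2}P_hx_0}{\cH}\lesssim\norm{x_0}{\cH^\alpha}\lesssim1$ since $\alpha\le\delta$; the stochastic term by Lemma~\ref{lem:BDG} and Assumption~\ref{assumptions:1}\ref{assumptions:1:G} (the integrand being deterministic), using $\norm{\Theta_h^{\frac\alpha2}P_hBG(s)}{\cL_2(H_0,\cH)}\lesssim\norm{\Lambda^{\frac{\beta-1}2}G(s)}{\cL_2^0}$ because $\alpha\le\beta$; and the drift term by Assumption~\ref{assumptions:1}\ref{assumptions:1:F} together with the bound $\norm{\Theta_h^{\frac\alpha2}P_hBw}{\cH}\lesssim\norm{w}{\dot{H}^{\alpha-1}}$ (the discrete analogue of~\eqref{eq:B_bound_1}, following from Assumption~\ref{assumptions:FEM}) and, for the dependence on $\tilde X_1$, with the $\alpha=0$ case of~\eqref{eq:fully_discrete_spatial_regularity}. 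The latter is proven first, by the same scheme together with the discrete Gronwall Lemma~\ref{lem:Gronwall}, and one then bootstraps to $\alpha\le\min(r,1)$ since the nonlinearity only ever needs its argument in $\dot{H}^0$.

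For~\eqref{eq:fully_discrete_strong_converence} I would write $\tilde X_1(t_n)-X_1(t_n)$ as the sum of an initial-value error, a noise error and a drift error. The initial-value error $P^1(\tilde E(t_n)-E(t_n))x_0$ is bounded by Lemma~\ref{lem:semigroup_error} with parameter $\delta$. The noise error splits as $\int_0^{t_n}P^1\tilde E(t_n-s)B(G(\dfloor{s})-G(s))\dd W(s)$, handled by Lemma~\ref{lem:BDG}, the bound $\norm{P^1\tilde E(t-s)B}{\cL(\dot{H}^{-1},\dot{H}^0)}\le C$ and the $\eta$-H\"older continuity of $G$, plus $\int_0^{t_n}(P^1\tilde E(t_n-s)-P^1E(t_n-s))BG(s)\dd W(s)$, handled by Lemma~\ref{lem:BDG} and Lemma~\ref{lem:semigroup_error} with parameter $\beta$ (using $\norm{\Theta^{\frac\beta2}BG(s)}{\cL_2(H_0,\cH)}=\norm{\Lambda^{\frac{\beta-1}2}G(s)}{\cL_2^0}\le C$). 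The drift error splits into three pieces: $\int_0^{t_n}(P^1\tilde E(t_n-s)-P^1E(t_n-s))BF(s,X_1(s))\dd s$, bounded by Lemma~\ref{lem:semigroup_error} with parameter $1+\theta$ using Assumption~\ref{assumptions:1}\ref{assumptions:1:F} and the $\cH^r$-regularity of $X$ from Theorem~\ref{thm:X_spat_reg} (with $\theta\le r$); $\int_0^{t_n}P^1\tilde E(t_n-s)B(F(\dfloor{s},\tilde X_1(\dfloor{s}))-F(s,\tilde X_1(\dfloor{s})))\dd s$, bounded by the $\eta$-H\"older-in-time estimate of Assumption~\ref{assumptions:1}\ref{assumptions:1:F} and~\eqref{eq:fully_discrete_spatial_regularity}; and $\int_0^{t_n}P^1\tilde E(t_n-s)B(F(s,\tilde X_1(\dfloor{s}))-F(s,X_1(s)))\dd s$, which I would further split, using the Lipschitz property of $F(s,\cdot)$, into a term producing $\Delta t\sum_{j<n}\norm{\tilde X_1(t_j)-X_1(t_j)}{L^p(\Omega,\dot{H}^0)}$ and a term controlled by the temporal regularity~\eqref{eq:mild_holder_x1} of $X_1$. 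Collecting everything and applying Lemma~\ref{lem:Gronwall} with $b_j=C\Delta t$ yields~\eqref{eq:fully_discrete_strong_converence}, the spatial exponent being $\min(\beta,\delta,1+\theta)\frac{\kappa}{\kappa+1}=r\frac{\kappa}{\kappa+1}$ and the temporal one $\min(r\frac{\rho}{\rho+1},\eta,1)$.

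For~\eqref{eq:fully_discrete_strong_converence_negative_norm} I would repeat this decomposition applied to $\Lambda^{-\frac\nu2}(\tilde X_1(t_n)-X_1(t_n))$. Each semigroup-error term is estimated in two competing ways and the better exponent is kept: (a) trivially, $\norm{\Lambda^{-\frac\nu2}\,\cdot\,}{\dot{H}^0}\le C\norm{\cdot}{\dot{H}^0}$, which reproduces the $\dot{H}^0$-rates of the preceding paragraph, namely $\beta$ for the noise, $1+\theta$ for the drift and $\delta$ for the initial value; and (b) genuinely exploiting the weaker norm on the noise and drift semigroup-error terms, by writing $\Lambda^{-\frac\nu2}P^1(\tilde E(t)-E(t))B(\cdot)=\bigl(\Lambda^{-\frac\nu2}P^1(\tilde E(t)-E(t))B\Lambda^{\frac12-\frac\nu2}\bigr)\Lambda^{\frac\nu2-\frac12}(\cdot)$ and applying the \emph{second} estimate of Lemma~\ref{lem:semigroup_error} with parameter $2\nu$, the leftover $\Lambda$-powers $\Lambda^{\frac{\nu-\beta}2}$ and $\Lambda^{\frac{\nu-1-\theta}2}$ being bounded on $\dot H^0$ thanks to $\nu\le\beta$ (noise) and $\nu\le1+\theta$ (drift); this contributes the extra rate $2\nu$. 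The initial-value term admits only route~(a) — the cosine part of $\tilde E(t_n)-E(t_n)$ does not improve in the $\dot{H}^{-\nu}$-norm, unlike the $B$-preceded sine part — which is why $\delta$ enters $r'$ without a maximum. Hence the three sources contribute rates $\max(2\nu,\beta)$, $\max(2\nu,1+\theta)$ and $\delta$, whose minimum is $r'$ (and likewise $\min(r'\frac{\rho}{\rho+1},\eta,1)$ for the time-step exponent, the time-discretisation pieces of $G$ and $F$ again contributing $\eta$). The one genuinely new point is the Lipschitz (Gronwall) term, where Assumption~\ref{assumptions:2} is essential: by the mean value theorem for $\cG_\mathrm{p}^1$-maps, $F(s,\tilde X_1(\dfloor{s}))-F(s,X_1(\dfloor{s}))=\int_0^1 F'(s,\xi_\tau)(\tilde X_1(\dfloor{s})-X_1(\dfloor{s}))\dd\tau$; then~\eqref{eq:ass2:munu}, together with the uniform $\dot{H}^\nu$-bounds on $X_1$ and $\tilde X_1$ (Theorem~\ref{thm:X_spat_reg} and~\eqref{eq:fully_discrete_spatial_regularity}, valid since $\nu\le\min(r,1)$), gives $\norm{\Lambda^{-\frac\mu2}(F(s,\tilde X_1(\dfloor{s}))-F(s,X_1(\dfloor{s})))}{\dot{H}^0}\lesssim\norm{\tilde X_1(\dfloor{s})-X_1(\dfloor{s})}{\dot{H}^{-\nu}}$, while $\norm{\Lambda^{-\frac\nu2}P^1\tilde E(t-s)B\Lambda^{\frac\mu2}}{\cL(\dot{H}^0)}\le C$ because $P^1E(t-s)B=\Lambda^{-\frac12}S(t-s)$ and $\mu\le\nu+1$ (precisely the constraint $\nu\ge\max(\mu-1,0)$ of Assumption~\ref{assumptions:2}), the discrete analogue following from~\eqref{eq:Lambda_norm_equivalence_1}--\eqref{eq:Lambda_norm_equivalence_2} and~\eqref{eq:fully_discrete_stability}. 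This again yields a term $\Delta t\sum_{j<n}\norm{\tilde X_1(t_j)-X_1(t_j)}{L^p(\Omega,\dot{H}^{-\nu})}$, so a final application of Lemma~\ref{lem:Gronwall} closes the estimate.

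The main obstacle is bookkeeping: each of the roughly eight error contributions must be estimated with exactly the right allocation of $\Lambda$-powers so that all of them collapse onto the single exponent $r$ (resp.\ $r'$), and one must repeatedly verify that the parameters fed into Lemma~\ref{lem:semigroup_error}, Assumption~\ref{assumptions:1}\ref{assumptions:1:F} and Assumption~\ref{assumptions:2} lie in the admissible ranges — e.g.\ $r-1\le\theta$, $r\le\beta$, $\nu\le\beta$, $\mu\le\nu+1$, and $\min(r+\nu,1)\ge r'\frac{\rho}{\rho+1}$ for the $X_1$-temporal-regularity term in the negative norm. The more structural difficulty is that the negative-order weight $\Lambda^{-\frac\nu2}$ acts on $\dot{H}^0$ whereas the discrete evolution $E_{h,\Delta t}^j$ commutes only with $\Lambda_h^{-\frac\nu2}$: moving $\Lambda^{-\frac\nu2}$ past $P_h$ and the discrete semigroup is clean only through Assumption~\ref{assumptions:FEM}, and extracting a genuine convergence rate (not merely boundedness) from this is exactly what forces the ``take the better of two bounds'' argument that produces the $\max(2\nu,\cdot)$ structure of $r'$.
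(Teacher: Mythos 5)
Your proposal is correct and follows essentially the same route as the paper's proof: the a priori bound via the discrete mild formulation, norm equivalences and Gronwall; the error decomposition into initial-value, drift and noise pieces estimated by Lemma~\ref{lem:semigroup_error}; the ``better of two bounds'' use of that lemma to produce the $\max(2\nu,\cdot)$ structure of $r'$; and the mean value theorem with~\eqref{eq:ass2:munu} for the negative-norm Gronwall term. The only differences are organizational (the order of the telescoping splits, whether the semigroup error acts on $F(\cdot,X_1)$ or $F(\cdot,\tilde X_1)$, and your bootstrap for~\eqref{eq:fully_discrete_spatial_regularity} versus the paper's single Gronwall at level $\alpha$), none of which affects the argument.
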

	\begin{proof}
		We start by showing~\eqref{eq:fully_discrete_spatial_regularity} using the representation~\eqref{eq:fully_discrete_mild_solution}. Take $\alpha \in [0,\min(r,1)]$. First note that since $\Theta^{\frac{\alpha}{2}}_h$ commutes with $\tilde{E}(t_n)$ for all $n \in \{0,1,\ldots,N_{\Delta t}\}$ and since $\alpha\le1$, by~\eqref{eq:Theta_norm_equivalence} and~\eqref{eq:fully_discrete_stability}, \begin{equation*}
		\norm{\Theta^\frac{\alpha}{2}\tilde{E}(t_n)v}{\cH} \lesssim 
		\norm{\Theta_h^\frac{\alpha}{2}\tilde{E}(t_n)v}{\cH} 
		= \norm{\tilde{E}(t_n)\Theta_h^\frac{\alpha}{2}P_h v}{\cH}
		\lesssim 
		\norm{\Theta_h^\frac{\alpha}{2}P_h v}{\cH} \lesssim \norm{\Theta^\frac{\alpha}{2}v}{\cH}
		\end{equation*}
		for all $n \in \{0,1,\ldots,N_{\Delta t}\}$ and $v \in \cH^{\alpha}$.
		Using this along with Lemma~\ref{lem:BDG},~\eqref{eq:B_bound_1} and finally Assumption~\ref{assumptions:1}, one obtains for $n = 1, 2, \ldots, N_{\Delta t}$ that
		\begin{align*}
		&\norm{\tilde X(t_n)}{L^p(\Omega,\cH^\alpha)} \\
		&\quad= \norm{\Theta^{\frac{\alpha}{2}} X^n_{h,\Delta t}}{L^p(\Omega,\cH)} \\
		&\quad \lesssim \norm{\Theta^{\frac{\alpha}{2}}\tilde{E}(t_n) x_0}{\cH} + \int^{t_n}_0 \norm{\Theta^{\frac{\alpha}{2}}\tilde{E}(t_n-s)BF(\dfloor{s}, \tilde{X}_1(\dfloor{s}))}{L^p(\Omega,\cH)} \dd s  \\ 
		&\quad\quad+ \left( \int^{t_n}_0 \norm{\Theta^{\frac{\alpha}{2}}\tilde{E}(t_n-s)B G(\dfloor{s})}{\cL_2(H_0,\cH)}^2 \dd s \right)^{\frac{1}{2}} 
		\\
		&\quad\lesssim \norm{\Theta^{\frac{\alpha}{2}}x_0}{\cH} + \sum_{j=0}^{n-1} \Delta t \norm{\Theta^{\frac{\alpha}{2}}BF(t_{j},P^1 X_{h, \Delta t}^{j})}{L^p(\Omega,\cH)} + \left( \sum_{j=0}^{n-1} \Delta t \norm{\Theta^{\frac{\alpha}{2}} B G(t_{j})}{\cL_2(H_0, \cH)}^2 \right)^{\frac{1}{2}}
		\\
		&\quad= \norm{x_0}{\cH^\alpha} + \sum_{j=0}^{n-1} \Delta t \norm{\Lambda^{\frac{\alpha-1}{2}} F(t_{j},P^1 X_{h, \Delta t}^{j})}{L^p(\Omega,\dot{H}^0)} +  \left( \sum_{j=0}^{n-1} \Delta t \norm{\Lambda^{\frac{\alpha-1}{2}} G(t_{j})}{\cL_2^0} \right)^{\frac{1}{2}}\\
		&\quad\lesssim \norm{x_0}{\cH^\delta} + \Delta t \sum_{j=0}^{n-1} \left(1 + \norm{\Lambda^{\frac{\alpha-1}{2}} P^1 X_{h, \Delta t}^{j}}{L^p(\Omega,\dot{H}^0)}\right) + \sqrt{T} \lesssim 1 +\Delta t \sum_{j=0}^{n-1} \norm{\tilde X(t_j)}{L^p(\Omega,\cH^\alpha)}.
		\end{align*} 
		An application of Lemma~\ref{lem:Gronwall} now yields~\eqref{eq:fully_discrete_spatial_regularity}. 
		
		We prove~\eqref{eq:fully_discrete_strong_converence} and~\eqref{eq:fully_discrete_strong_converence_negative_norm} in tandem, with $\alpha \in \{0,\nu\}$, by first making the split
		\begin{align*}
		&\norm{\tilde{X_1}(t_n)-X_1(t_n)}{L^p(\Omega,\dot{H}^{-\alpha})} \\
		&\quad= \norm{\Lambda^{-\frac{\alpha}{2}} P^1(\tilde{X}(t_n)-X(t_n))}{L^p(\Omega,\dot{H}^0)} \\
		&\quad\le \norm{\Lambda^{-\frac{\alpha}{2}}P^1(\tilde{E}(t_n)-E(t_n))x_0}{L^p(\Omega,\dot{H}^0)} \\
		&\quad\quad+ \int^{t_n}_0 \lrnorm{\Lambda^{-\frac{\alpha}{2}}P^1\left(\tilde{E}(t_n-s)BF(\dfloor{s}, \tilde{X}_1(\dfloor{s})) - E(t_n-s)BF(s, X_1(s))\right)}{L^p(\Omega,\dot{H}^0)} \dd s \\
		&\quad\quad+ \lrnorm{\int^{t_n}_0 \Lambda^{-\frac{\alpha}{2}} P^1\left(\tilde{E}(t_n-s)BG(\dfloor{s})-E(t_n-s)BG(s)\right) \dd W(s)}{L^p(\Omega,\dot{H}^0)} \\
		&\quad= \mathrm{I} + \mathrm{II} + \mathrm{III},
		\end{align*}
		for arbitrary $n = 1, 2, \ldots, N_{\Delta t}$. For the first term, by Lemma~\ref{lem:semigroup_error} and Assumption~\ref{assumptions:1}\ref{assumptions:1:x0},
		\begin{align*}
		\mathrm{I} &=
		\norm{\Lambda^{-\frac{\alpha}{2}}P^1(\tilde{E}(t_n)-E(t_n))\Theta^{-\frac{\delta}{2}}\Theta^{\frac{\delta}{2}} x_0}{\dot{H}^0} \\
		&\le \norm{\Lambda^{-\frac{\alpha}{2}}}{\cL(\dot{H}^0)} \norm{P^1(\tilde{E}(t_n)-E(t_n))\Theta^{-\frac{\delta}{2}}}{\cL(\cH,\dot{H}^0)} \norm{\Theta^{\frac{\delta}{2}} x_0}{\cH} \lesssim h^{\min(\delta \frac{\kappa}{\kappa+1},\kappa)} + {\Delta t}^{\min(\delta \frac{\rho}{\rho+1},1)}.
		\end{align*}
		Before treating $\mathrm{II}$, we consider the last term. Lemma~\ref{lem:BDG} yields 
		\begin{align*}
		\mathrm{III}^2 &\lesssim \int^{t_n}_0 \norm{\Lambda^{-\frac{\alpha}{2}} P^1\left(\tilde{E}({t_n}-s)BG(\dfloor{s})-E({t_n}-s)BG(s)\right)}{\cL_2^0}^2 \dd s \\
		&= \sum_{j=0}^{n-1} \int_{t_j}^{t_{j+1}} \norm{\Lambda^{-\frac{\alpha}{2}} P^1\left(\tilde{E}({t_n}-s)BG(t_j)-E({t_n}-s)BG(s)\right)}{\cL_2^0}^2 \dd s. %\\
		%&\lesssim \sum_{j=0}^{n-1} \Delta t \left(h^{\min(\max(2\alpha,\beta) \frac{\kappa}{\kappa+1},\kappa)} + {\Delta t}^{\min(\max(2\alpha,\beta) \frac{\rho}{\rho+1},\eta,1)}  \right)^2 \\
		%&\le \left(h^{\min(\max(2\alpha,\beta) \frac{\kappa}{\kappa+1},\kappa)} + {\Delta t}^{\min(\max(2\alpha,\beta) \frac{\rho}{\rho+1},\eta,1)}  \right)^2.
		\end{align*}
		
		For the last integrand, we make the split
		\begin{align*}
		&\norm{\Lambda^{-\frac{\alpha}{2}} P^1\left(\tilde{E}({t_n}-s)BG(t_j)-E({t_n}-s)BG(s)\right)}{\cL_2^0} \\ &\quad\le \norm{\Lambda^{-\frac{\alpha}{2}} P^1\left(\tilde{E}({t_n}-s)-E({t_n}-s)\right)BG(t_j)}{\cL_2^0}  \\
		&\quad\quad+ \norm{\Lambda^{-\frac{\alpha}{2}} P^1\left(E({t_n}-s)B(G(t_j)-G(s)\right)}{\cL_2^0} \\
		&= \mathrm{IV} + \mathrm{V}.
		\end{align*}
		For the first of these terms, by~\eqref{eq:schatten_bound_1},~\eqref{eq:B_bound_2} and Lemma~\ref{lem:semigroup_error}, and since $\Lambda^{-\frac{\alpha}{2}}$ is a bounded operator, we get
		\begin{align*}
		\mathrm{IV} &\le \norm{\Lambda^{-\frac{\alpha}{2}}}{\cL(\dot{H}^0)} \norm{P^1\left(\tilde{E}(t_n-s)-E(t_n-s)\right)B\Lambda^{\frac{1-\beta}{2}}}{\cL(\dot{H}^0)} \norm{\Lambda^{\frac{\beta-1}{2}}G(t_j)}{\cL_2^0} \\
		&\lesssim
		\norm{P^1\left(\tilde{E}(t_n-s)-E(t_n-s)\right)B\Lambda^{\frac{1-\beta}{2}}}{\cL(\dot{H}^0)} \\
		&=\norm{P^1\left(\tilde{E}(t_n-s)-E(t_n-s)\right)\Theta^{-\frac{\beta}{2}} B\Lambda^{\frac{1}{2}}}{\cL(\dot{H}^0)} \\
		&\lesssim \norm{P^1\left(\tilde{E}(t_n-s)-E(t_n-s)\right)\Theta^{-\frac{\beta}{2}} }{\cL(\dot{H}^0)} \le h^{\min(\beta \frac{\kappa}{\kappa+1},\kappa)} + {\Delta t}^{\min(\beta \frac{\rho}{\rho+1},1)}.
		\end{align*}
		Furthermore, Lemma~\ref{lem:semigroup_error} also implies that
		\begin{align*}
		&\norm{\Lambda^{-\frac{\alpha}{2}}P^1\left(\tilde{E}(t_n-s)-E(t_n-s)\right)B\Lambda^{\frac{1-\beta}{2}}}{\cL(\dot{H}^0)} \\
		&\quad=
		\norm{\Lambda^{-\frac{\alpha}{2}}P^1\left(\tilde{E}(t_n-s)-E(t_n-s)\right) B\Lambda^{\frac{1-\alpha}{2}} }{\cL(\dot{H}^0)} \norm{\Lambda^{\frac{\alpha-\beta}{2}}}{\cL(\dot{H}^0)}\\
		&\quad\lesssim  h^{2 \alpha \frac{\kappa}{\kappa+1}} + {\Delta t}^{\min(2 \alpha \frac{\rho}{\rho+1},1)},
		\end{align*}
		using also the fact that $\alpha \le r \le \beta$ and that $\alpha \le 2 \le \kappa$ in the last inequality. Therefore
		\begin{equation*}
		\mathrm{IV} \lesssim  h^{\min(\max(2\alpha,\beta) \frac{\kappa}{\kappa+1},\kappa)} + {\Delta t}^{\min(\max(2\alpha,\beta) \frac{\rho}{\rho+1},1)}.
		\end{equation*}
		Next, for $\mathrm{V}$, by~\eqref{eq:schatten_bound_1}, \eqref{eq:semigroup_bound},  Assumption~\ref{assumptions:1} and~\eqref{eq:B_bound_2}, we obtain
		\begin{align*}
		\mathrm{V} &= \norm{\Lambda^{-\frac{\alpha+\beta}{2}} P^1(E({t_n}-s)B\Lambda^{\frac{1}{2}}\Lambda^{\frac{\beta-1}{2}}(G(t_j)-G(s))}{\cL_2^0} \\
		&\le \norm{\Lambda^{-\frac{\alpha+\beta}{2}}}{\cL(\dot{H}^0)} \norm{E({t_n}-s)}{\cL(\cH)} \norm{B\Lambda^\frac{1}{2}}{\cL(\dot{H}^0,\cH)} 
		\norm{\Lambda^{\frac{\beta-1}{2}} \left(G(t_j)-G(s)\right)}{\cL_2^0} \lesssim |t_j-s|^\eta.
		\end{align*}
		As a consequence, we arrive at
		\begin{equation*}
		\mathrm{III} \lesssim \left(h^{\min(\max(2\alpha,\beta) \frac{\kappa}{\kappa+1},\kappa)} + {\Delta t}^{\min(\max(2\alpha,\beta) \frac{\rho}{\rho+1},\eta,1)}  \right).
		\end{equation*}
		We now continue with $\mathrm{II}$ and note that
		\begin{equation*}
		\mathrm{II} = \sum_{j=0}^{n-1} \int_{t_j}^{t_{j+1}} \norm{\Lambda^{-\frac{\alpha}{2}}P^1\left(\tilde{E}(t_n-s)BF(t_{j}, \tilde{X}_1(t_{j})) - E(t_n-s)BF(s, X_1(s))\right)}{L^p(\Omega,\dot{H}^0)} \dd s. 
		%&\lesssim \norm{\Lambda^{-\frac{\alpha}{2}}}{\cL(\dot{H}^0)}\Delta t \sum_{j=0}^{n-1} \left(h^{\frac{\kappa}{\kappa+1}} + {\Delta t}^{\min(\frac{\rho}{\rho+1},r,\eta)} + \norm{ \tilde{X}_1(t_{j})-{X}_1(t_{j}) }{L^p(\Omega,\dot{H}^{0})} \right) \\
		%&\lesssim h^{\frac{\kappa}{\kappa+1}} + {\Delta t}^{\min(\frac{\rho}{\rho+1},r,\eta)} + \Delta t\sum_{j=0}^{n-1} \norm{ \tilde{X}_1(t_{j})-{X}_1(t_{j}) }{L^p(\Omega,\dot{H}^{0})}.
		\end{equation*}
		We split the integrand with
		\begin{align*}
		&\norm{\Lambda^{-\frac{\alpha}{2}}P^1\left(\tilde{E}(t_n-s)BF(t_{j}, \tilde{X}_1(t_{j})) - E(t_n-s)BF(s, X_1(s))\right)}{L^p(\Omega,\dot{H}^0)} \\
		&\quad \le 
		\norm{\Lambda^{-\frac{\alpha}{2}}P^1\left(\tilde{E}(t_n-s)-E(t_n-s)\right)BF(t_{j}, \tilde{X}_1(t_{j}))}{L^p(\Omega,\dot{H}^0)}  \\
		&\quad\quad+ \norm{\Lambda^{-\frac{\alpha}{2}}P^1E(t_n-s)B\left(F(t_{j}, \tilde{X}_1(t_{j}))-F(t_{j}, {X}_1(t_{j})) \right)}{L^p(\Omega,\dot{H}^0)} 
		\\
		&\quad\quad+ \norm{\Lambda^{-\frac{\alpha}{2}}P^1E(t_n-s)B\left(F(t_{j}, {X}_1(t_{j}))-F(t_j, {X}_1(s)) \right)}{L^p(\Omega,\dot{H}^0)}  \\
		&\quad\quad+ \norm{\Lambda^{-\frac{\alpha}{2}}P^1E(t_n-s)B\left(F(t_{j}, {X}_1(s))-F(s, {X}_1(s)) \right)}{L^p(\Omega,\dot{H}^0)}  \\
		&\quad= \mathrm{VI} + \mathrm{VII} + \mathrm{VIII}+ \mathrm{IX}.	
		\end{align*}
		For the first of these terms, under Assumption~\ref{assumptions:1}, by Lemma~\ref{lem:semigroup_error} and~\eqref{eq:B_bound_2}, using the assumption that $\theta \le \min(\beta,\delta,1) \le \min(r,1)$, we get
		\begin{align*}
		\mathrm{VI} &\le
		\norm{\Lambda^{-\frac{\alpha}{2}}}{\cL(\dot{H}^0)}
		\norm{P^1\left(\tilde{E}(t_n-s)-E(t_n-s)\right)B\Lambda^{\frac{\theta}{2}}}{\cL(\dot{H}^0)} \norm{ \Lambda^{\frac{\theta}{2}}F(t_{j}, \tilde{X}_1(t_{j}))}{L^p(\Omega,\dot{H}^0)} \\
		&\lesssim \norm{P^1\left(\tilde{E}(t_n-s)-E(t_n-s)\right) \Theta^{-\frac{1+\theta}{2}}}{\cL(\cH,\dot{H}^0)} \norm{B\Lambda^{\frac{1}{2}} }{\cL(\dot{H}^0,\cH)}\left(1 + \norm{ \Lambda^{\frac{\theta}{2}}\tilde{X}_1(t_{j})}{L^p(\Omega,\dot{H}^0)} \right)\\
		&\lesssim  \left(h^{(1+\theta)\frac{\kappa}{\kappa+1}} + {\Delta t}^{\min((1+\theta)\frac{\rho}{\rho+1},1)}\right).
		\end{align*}
		Since $\alpha \le r \le 1+\theta \le 2$, similarly to term $\mathrm{IV}$, Lemma~\ref{lem:semigroup_error} also implies that
		\begin{align*}
		\mathrm{VI} &\lesssim
		\norm{\Lambda^{-\frac{\alpha}{2}}P^1\left(\tilde{E}(t_n-s)-E(t_n-s)\right)B\Lambda^{\frac{1-\alpha}{2}}}{\cL(\dot{H}^0)}
		\norm{ \Lambda^{\frac{\alpha-1}{2}} F(t_{j}, \tilde{X}_1(t_{j}))}{L^p(\Omega,\dot{H}^0)} \\
		&\lesssim 
		\norm{\Lambda^{-\frac{\alpha}{2}}P^1\left(\tilde{E}(t_n-s)-E(t_n-s)\right)B\Lambda^{\frac{1-\alpha}{2}}}{\cL(\dot{H}^0)}
		\norm{ \Lambda^{\frac{\theta}{2}} F(t_{j}, \tilde{X}_1(t_{j}))}{L^p(\Omega,\dot{H}^0)} \\
		&\lesssim  \left(h^{2\alpha\frac{\kappa}{\kappa+1}} + {\Delta t}^{\min(2\alpha\frac{\rho}{\rho+1},1)}\right)\left(1 + \norm{
			\Lambda^{\frac{\theta}{2}} \tilde{X}_1(t_{j})}{L^p(\Omega,\dot{H}^0)} \right),
		\end{align*}
		whence 
		\begin{equation*}
		\mathrm{VI} \lesssim \left(h^{\max(2\alpha,1+\theta) \frac{\kappa}{\kappa+1}} + {\Delta t}^{\min(\max(2\alpha,1+\theta) \frac{\rho}{\rho+1},1)}  \right).
		\end{equation*}
		Term $\mathrm{VII}$ can be bounded using~\eqref{eq:P^1EB_Theta_commute}, \eqref{eq:semigroup_bound} and~\eqref{eq:B_bound_2}, as
		\begin{align*}
		\mathrm{VII} &= \norm{P^1E(t_n-s)B\Lambda^\frac{1}{2} \Lambda^{-\frac{1+\alpha}{2}}\left(F(t_{j}, \tilde{X}_1(t_{j}))-F(t_{j}, {X}_1(t_{j})) \right)}{L^p(\Omega,\dot{H}^0)} \\
		&\lesssim \norm{\Lambda^{-\frac{1+\alpha}{2}}\left(F(t_{j}, \tilde{X}_1(t_{j}))-F(t_{j}, {X}_1(t_{j})) \right)}{L^p(\Omega,\dot{H}^0)}.
		\end{align*}
		If only Assumption~\ref{assumptions:1} holds, we directly obtain
		\begin{equation*}
		\mathrm{VII} \lesssim \norm{ \tilde{X}_1(t_{j})-{X}_1(t_{j}) }{L^p(\Omega,\dot{H}^{0})}.
		\end{equation*}
		If also Assumption~\ref{assumptions:2} holds with $\alpha = \nu \ge \mu -1 $, then, since $\cG^1_\mathrm{p}(\dot{H}^0,\dot{H}^{-(1+\nu)}) \subset \cG^1_\mathrm{p}(\dot{H}^0,\dot{H}^{-\mu})$, we may use the mean value theorem along with~\eqref{eq:ass2:munu} to deduce that
		\begin{align*}
		\mathrm{VII} 
		& \le \int^1_0 \norm{\Lambda^{-\frac{1+\nu}{2}} F'\left(t_j,s\tilde{X}_1(t_{j})+(1-s){X}_1(t_{j})\right)\left(\tilde{X}_1(t_{j})-{X}_1(t_{j})\right)}{L^p(\Omega,\dot{H}^0)} \dd s \\ & \lesssim \int^1_0 \norm{\Lambda^{-\frac{\mu}{2}} F'\left(t_j,s\tilde{X}_1(t_{j})+(1-s){X}_1(t_{j})\right)\left(\tilde{X}_1(t_{j})-{X}_1(t_{j})\right)}{L^p(\Omega,\dot{H}^0)} \dd s \\
		&\lesssim \left(1 +
		\norm{\Lambda^{\frac{\nu}{2}}\tilde{X}_1(t_{j})}{L^p(\Omega,\dot{H}^0)} +
		\norm{\Lambda^{\frac{\nu}{2}}{X}_1(t_{j})}{L^p(\Omega,\dot{H}^0)}\right) 
		\norm{ \Lambda^{-\frac{\nu}{2}} \left( \tilde{X}_1(t_{j})-{X}_1(t_{j})\right)}{L^p(\Omega,\dot{H}^0)}
		\\
		&\lesssim \left(1 + \norm{\Lambda^{\frac{\min(r,1)}{2}}\tilde{X}_1(t_{j})}{L^p(\Omega,\dot{H}^0)}+
		\norm{\Lambda^{\frac{\min(r,1)}{2}}{X}_1(t_{j})}{L^p(\Omega,\dot{H}^0)}\right) \\
		&\quad\quad\times\norm{ \Lambda^{-\frac{\nu}{2}} \left( \tilde{X}_1(t_{j})-{X}_1(t_{j})\right)}{L^p(\Omega,\dot{H}^0)} \lesssim \norm{ \Lambda^{-\frac{\nu}{2}} \left( \tilde{X}_1(t_{j})-{X}_1(t_{j})\right)}{L^p(\Omega,\dot{H}^0)},
		\end{align*}
		where we also applied Theorem~\ref{thm:X_spat_reg} and~\eqref{eq:fully_discrete_spatial_regularity} in the last step.
		Similarly, we have
		\begin{equation*}
		\mathrm{VIII} \lesssim \norm{ X_1(t_{j})-X_1(s) }{L^p(\Omega,\dot{H}^{0})} \lesssim |t_j - s|^{\min({r,1})}
		\end{equation*}
		if we only consider Assumption~\ref{assumptions:1}, while if also Assumption~\ref{assumptions:2} holds, then
		\begin{equation*}
		\mathrm{VIII} \lesssim 
		\norm{ \Lambda^{-\frac{\nu}{2}} \left( X_1(t_{j})-{X}_1(s)\right)}{L^p(\Omega,\dot{H}^0)} \lesssim |t_j - s|^{\min(2\nu,1)},
		\end{equation*}
		using Theorem~\ref{thm:X_spat_reg} and Theorem~\ref{thm:mild_holder} in the last step.
		For the last term, by~\eqref{eq:P^1EB_Theta_commute},~\eqref{eq:semigroup_bound} and~\eqref{eq:B_bound_2},
		\begin{align*}
		\mathrm{IX} &\lesssim \norm{\Lambda^{-\frac{1 }{2}}\left(F(t_{j}, {X}_1(s))-F(s, {X}_1(s)) \right)}{L^p(\Omega,\dot{H}^0)} \lesssim \left(1 + \norm{X_1(s)}{L^p(\Omega,\dot{H}^0)}\right) |t_j-s|^\eta \\ &\lesssim |t_j-s|^\eta,
		\end{align*}
		where we have applied Theorem~\ref{thm:X_spat_reg} and Assumption~\ref{assumptions:1}\ref{assumptions:1:F} in the last inequality. 
		
		Collecting the bounds 
		on terms $\mathrm{VI}, \mathrm{VII}, \mathrm{VIII}$ and $\mathrm{IX}$, we have shown that under Assumption~\ref{assumptions:1}, with $\alpha = 0$,
		\begin{align*}
		\mathrm{II} \lesssim h^{(1+\theta) \frac{\kappa}{\kappa+1}} + {\Delta t}^{\min((1+\theta) \frac{\rho}{\rho+1},r,1)} + \Delta t\sum_{j=0}^{n-1} \norm{ \tilde{X}_1(t_{j})-{X}_1(t_{j}) }{L^p(\Omega,\dot{H}^{0})}.
		\end{align*}
		Taking also into account the bound on term $\mathrm{I}$ and that
		\begin{equation*}
		\mathrm{III} \lesssim \left(h^{\min(\beta \frac{\kappa}{\kappa+1},\kappa)} + {\Delta t}^{\min(\beta \frac{\rho}{\rho+1},\eta,1)}  \right),
		\end{equation*}
		we obtain~\eqref{eq:fully_discrete_strong_converence} by Lemma~\ref{lem:Gronwall}.
		
		On the other hand, if Assumption~\ref{assumptions:2} also holds, the bound on term $\mathrm{I}$ remains the same while with $\alpha = \nu$,
		\begin{align*}
		\mathrm{II} &\lesssim h^{\min(\max(2\nu,1+\theta) \frac{\kappa}{\kappa+1},\kappa)} + {\Delta t}^{\min(\max(2\nu,1+\theta) \frac{\rho}{\rho+1},\max(2\nu,r),\eta,1)}  \\
		&\quad\quad\quad+ \Delta t \sum_{j=0}^{n-1}  \norm{ \Lambda^{-\frac{\nu}{2}} (\tilde{X}_1(t_{j})-{X}_1(t_{j})) }{L^p(\Omega,\dot{H}^{0})}. 
		%\\
		%&\lesssim h^{\min(\max(2\nu,1+\theta) \frac{\kappa}{\kappa+1},\kappa)} + {\Delta t}^{\min(\min(\max(2\nu,\beta),\max(2\nu,1+\theta),\delta) \frac{\rho}{\rho+1},\eta,1)}  \\
		%&\quad\quad\quad+ \Delta t \sum_{j=0}^{n-1}  \norm{ \Lambda^{-\frac{\nu}{2}} (\tilde{X}_1(t_{j})-{X}_1(t_{j})) }{L^p(\Omega,\dot{H}^{0})},
		\end{align*}
		Taking also into account that
		\begin{equation*}
		\mathrm{III} \lesssim \left(h^{\min(\max(2\nu,\beta) \frac{\kappa}{\kappa+1},\kappa)} + {\Delta t}^{\min(\max(2\nu,\beta) \frac{\rho}{\rho+1},\eta,1)}  \right),
		\end{equation*} by another application of Lemma~\ref{lem:Gronwall}, we obtain
		\begin{equation}
		\label{eq:Delta_t_exponent}
		\begin{split}
		&\sup_{n \in \{0,1,\ldots,N_{\Delta t}\}} \norm{\tilde{X_1}(t_n)-X_1(t_n)}{L^p(\Omega,\dot{H}^{-\nu})} \\ 
		&\hspace{70pt}\lesssim h^{r' \frac{\kappa}{\kappa+1}} + {\Delta t}^{\min(\max(2\nu,\beta) \frac{\rho}{\rho+1},\max(2\nu,1+\theta) \frac{\rho}{\rho+1},\delta \frac{\rho}{\rho+1},\max(2\nu,r),\eta,1)}.
		\end{split}
		\end{equation}
		We now note that 
		\begin{equation*} \min(\max(2\nu,\beta),\max(2\nu,1+\theta),\delta) \le \max(2\nu,r),
		\end{equation*}
		so, since ${\rho}/{(\rho+1)} < 1$, it holds that either $\max(2\nu,r) > \max(2\nu,\beta){\rho}/{(\rho+1)}$ or $\max(2\nu,r) > \max(2\nu,1+\theta){\rho}/{(\rho+1)}$ or $\max(2\nu,r) > \delta{\rho}/{(\rho+1)}$. No matter which of these three cases occur, the minimum in the exponent of $\Delta t$ in~\eqref{eq:Delta_t_exponent} is not attained at $\max(2\nu,r)$.
		In other words, 
		\begin{align*}
		&{\min(\max(2\nu,\beta) \frac{\rho}{\rho+1},\max(2\nu,1+\theta) \frac{\rho}{\rho+1},\delta \frac{\rho}{\rho+1},\max(2\nu,r),\eta,1)} \\
		&\quad= {\min(\max(2\nu,\beta) \frac{\rho}{\rho+1},\max(2\nu,1+\theta) \frac{\rho}{\rho+1},\delta \frac{\rho}{\rho+1},\eta,1)} = \min(r' \frac{\rho}{\rho+1},\eta,1).
		\end{align*}
		With this, the proof is completed.
	\end{proof}
	
	\subsection{Weak convergence and Malliavin calculus}
	%\label{sec:weak_convergence}
	
	To deduce a result on the weak convergence of the approximation~\eqref{eq:fully_discrete_mild_solution} to the mild solution given by~\eqref{eq:mild_solution}, we use Malliavin calculus for which we briefly review some definitions and results from~\cite{AKL16} (the authors therein consider so called \textit{refined} Sobolev--Malliavin spaces whereas we only need classical ones, hence the difference in notation below).
	To avoid technicalities we assume from here on that the filtration $(\cF_t)_{t \in [0,T]}$ is generated by the Wiener process $W$. Let $I(\varphi) = \int^T_0 \varphi(t) \dd W(t)$ for $\varphi \in L^2([0,T], \cL_2(H_0,\R)) \simeq L^2([0,T], H_0)$ and let $\cS $ denote the set of all cylindrical random variables of the form $F = f(I(\varphi_1), \ldots, I(\varphi_N))$ for $f \in \cC^1_{\mathrm{p}}(\R^N,\R)$ and a sequence $(\phi_j)_{j=1}^N \subset L^2([0,T], H_0)$, $N \in \N$. 
	%The class $\cS$ is dense in $L^p(\Omega,\R)$ for all $p \ge 1$. 
	The definition of the Malliavin derivative of $F \in \cS$ is given by $D_{\cdot}F = \sum^N_{j=1} \partial_j f(I(\varphi_1), \ldots, I(\varphi_N)) \varphi_j(\cdot) \in L^2([0,T]\times \Omega, H_0)$. For a given real separable Hilbert space $U$ we let $\cS(U)$ be the space of all $U$-valued random variables of the form $Y = \sum_{j=1}^{M} F_j v_j$ with $F_j \in \cS$ and $v_j \in U$ for $j=1, 2, \ldots, M$, and define $D Y = \sum_{j=1}^{M} v_j \otimes D F_j  \in U \otimes L^2([0,T]\times \Omega, H_0)$. Note that $U \otimes L^2([0,T]\times \Omega, H_0) \simeq L^2([0,T]\times \Omega, \cL_2(H_0,U)) \simeq L^2(\Omega, L^2([0,T], \cL_2(H_0,U)))$. This definition of $D$ does not depend on the specific representation of $Y$. The operator $D\colon \cS(U) \subset L^p(\Omega,U) \to L^p(\Omega,L^2([0,T], \cL_2(H_0,U))$ is closable for any $p>1$ and we write $\D^{1,p}(U)$ for the closure of $\cS(U)$ in $L^p(\Omega,U)$ with respect to the norm
	\begin{equation*}
	\norm{F}{\D^{1,p}(U)} = \left(\E[\norm{F}{U}^p]+\E[\norm{DF}{L^2([0,T],\cL_2(H_0,U))}^p]\right)^\frac{1}{p}.
	\end{equation*}
	%The iterated Malliavin derivative is defined analogously. We say that $F \in \D^{2,p}(U)$ if $DF \in \D^{1,p}(L^2([0,T], \cL_2(H_0,U)))$ and denote the second derivative by $D^2F$, which takes values in the space $L^2([0,T], \cL_2(H_0,L^2([0,T], \cL_2(H_0,U)))) \simeq L^2([0,T]^2,L_2(H_0,\cL_2(H_0,U)))$. % Note: not entirely clear for $p\neq 2$, perhaps replace with Kruse's definition.
	Next, we recall some of the basic properties of the Malliavin derivative. %For any predictable process $\Phi:[0,T]\times\Omega \to U$ fulfilling $\Phi \in L^2([0,T]\times \Omega, U)$ we have, for any $t \in [0,T)$, $\Phi(t) \in \D^{1,2}(U)$ and $D\Phi(t)=0$ almost everywhere in $[t,T]$.
	First of all, any deterministic element is Malliavin differentiable with Malliavin derivative zero. For predictable processes $\Phi \in L^2([0,T]\times \Omega, \cL_2(H_0,U))$ and any $F \in \D^{1,2}(U)$, the following equality holds, sometimes referred to as Malliavin integration by parts,
	\begin{equation}
	\label{eq:malliavin_integration_by_parts}
	\inpro[L^2(\Omega,U)]{F}{\int^T_0 \Phi(t) \dd W(t)} = \int_{0}^{T} \inpro[L^2(\Omega, \cL_2(H_0,U))]{D_r F}{\Phi(r)} \dd r.
	\end{equation}
	We also need to know how the Malliavin derivative acts on stochastic integrals, but restrict ourselves to the case that the integrand $\Phi \in L^2([0,T], \cL_2(H_0,U))$ is deterministic. Then, for all $t \in [0,T]$, it follows that $\int^t_0 \Phi(r) \dd W(r) \in \D^{1,p}(U)$ for all $p > 1$ and
	\begin{equation}
	\label{eq:malliavin_on_ito_integral}
	D \int^t_0 \Phi(r) \dd W(r) = \chi_{(0,t]}(\cdot) \Phi(\cdot).
	\end{equation}
	For Lebesgue integrals of stochastic processes on the other hand, Malliavin differentiation and integration simply commute (see~\cite[Proposition~4.8]{K14}). If $\Phi\colon [0,T] \times \Omega \to U$ fulfill $\Phi \in \D^{1,p}(L^2([0,T],U))$ for some $p \ge 2$, then $\int^T_0 \Phi(t) \dd t \in \D^{1,p}(U)$ and 
	\begin{equation}
	\label{eq:malliavin_on_det_integral}
	D \int^T_0 \Phi(t) \dd t = \int^T_0 D \Phi(t) \dd t.
	\end{equation}
	It is known (cf. \cite[Section~1.3]{N06}) that a sufficient condition for $\Phi \in \D^{1,p}(L^2([0,T],U))$ is that $\Phi(t) \in \D^{1,p}(U)$ for all $t \in [0,T]$ and that $\sup_{t \in [0,T]} \norm{D \Phi(t)}{L^2(\Omega\times[0,T],\cL_2(H_0,U))} < \infty$. It is also worth mentioning that $D$ commutes with any bounded linear operator between Hilbert spaces. For nonlinear mappings $\varphi \in \cG_\mathrm{p}^1(U,V)$, where $V$ is another arbitrary real separable Hilbert space, a chain rule holds instead. If there is a $q\ge 1$ and a constant $C>0$ such that $\norm{\varphi(x)}{V} \le C ( 1 + \norm{x}{U}^{q})$ and $\norm{\varphi'(x)}{\cL(U,V)} \le C (1 + \norm{x}{U}^{q-1})$ for all $x \in U$, then for all $p > 1$ and $F \in \D^{1,pq}(U)$, it holds that $\varphi(F) \in \D^{1,p}(V)$ and 
	\begin{equation}
	\label{eq:malliavin_chain_rule}
	D\varphi(F) = \varphi'(F)DF.
	\end{equation} 
	
	With these results in place, to be able to deduce a weak convergence result, we need to impose a stronger condition on $G$  in~\eqref{eq:ito_wave_equation} and on the covariance operator $Q$ of the Wiener process. We also take this opportunity to specify our assumptions on the test function $\phi\colon \dot{H}^0 \to \R$. We identify $\cL(\dot{H}^0,\R)$ with $\dot{H}^0$ so that, for every $u \in \dot{H}^0$, $\phi'(u) \in \dot{H}^0$.  
	
	\begin{assumption}
		\label{assumptions:3}
		The following conditions hold:
		\hfill
		\begin{enumerate}[label=(\roman*)]
			\item \label{ass:3:phi} $\phi \in \cG^2_{\mathrm{p}}(\dot{H}^0,\R)$,
			\item \label{ass:3:G} $G(t)=I$ for all $t \in [0,T]$, and either
			\item \label{ass:3:origQ} $\norm{\Lambda^{\beta-\frac{1}{2}}Q \Lambda^{-\frac{1}{2}}}{\trace} < \infty$ or 
			\item \label{ass:3:altQ} $\trace(Q) < \infty$ and $\Lambda^{-1/2} \phi''(u) =  \phi''(u) \Lambda^{-1/2}$ for all $u \in \dot{H}^0$.
		\end{enumerate} 
	\end{assumption}
	
	The condition Assumption~\ref{assumptions:3}\ref{ass:3:origQ}, $\norm{\Lambda^{\beta-\frac{1}{2}}Q \Lambda^{-\frac{1}{2}}}{\trace} < \infty$, implies the corresponding condition $\norm{\Lambda^{\frac{\beta-1}{2}}}{\cL_2^0} < \infty$ of Assumption~\ref{assumptions:1}\ref{assumptions:1:G}. They are equivalent in the important case that $Q=I$ or more generally when $Q\Lambda = \Lambda Q$, see~\cite[Theorem~2.1]{KLL12}. We also have the following simple but useful lemma.
	\begin{lemma}
		\label{lem:Malliavin_1}
		Suppose that $\norm{\Lambda^{\beta-\frac{1}{2}}Q \Lambda^{-\frac{1}{2}}}{\trace} < \infty$. Then $\cL(\dot{H}^{-1},\dot{H}^{0}) \subset \cL_2^0$ and for $\Gamma \in \cL(\dot{H}^{-1},\dot{H}^{0})$ there exists a constant $C> 0$ such that $$\norm{\Gamma}{\cL_2^0} \le C \norm{\Gamma}{\cL(\dot{H}^{-1},\dot{H}^{0})} = C \norm{\Gamma \Lambda^{\frac{1}{2}}}{\cL(\dot{H}^{0})}.$$
	\end{lemma}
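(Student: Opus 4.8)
The plan is to reduce everything to the ideal inequalities~\eqref{eq:schatten_bound_1}--\eqref{eq:schatten_bound_2} by means of a single factorization through $\Lambda^{\frac{1}{2}}$, exploiting that $\Lambda^{\frac12}$ is an isometric isomorphism from $\dot{H}^0$ onto $\dot{H}^{-1}$.

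I would begin with two bookkeeping identities. First, $\norm{\Gamma}{\cL^0_2} = \norm{\Gamma Q^{\frac12}}{\cL_2(\dot{H}^0)}$, which holds in $[0,\infty]$ for every $\Gamma$ and follows from the standard fact that $Q^{\frac12}e_j$ runs through an orthonormal basis of $H_0$ as $e_j$ runs through one of $(\ker Q^{\frac12})^{\perp}$ (the contributions from $\ker Q^{\frac12}$ being zero). Second, $\norm{\Gamma}{\cL(\dot{H}^{-1},\dot{H}^0)} = \norm{\Gamma\Lambda^{\frac12}}{\cL(\dot{H}^0)}$, which is the rightmost equality already appearing in the statement and is immediate from $\norm{\Lambda^{\frac12}v}{\dot{H}^{-1}} = \norm{v}{\dot{H}^0}$ together with the analogous isometry $\Lambda^{-\frac12}\colon\dot{H}^0\to\dot{H}^{-1}$.

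The key step is then the factorization, valid for $\Gamma\in\cL(\dot{H}^{-1},\dot{H}^0)$,
\begin{equation*}
\Gamma Q^{\frac12} = \big(\Gamma\Lambda^{\frac12}\big)\big(\Lambda^{-\frac12}Q^{\frac12}\big),
\end{equation*}
in which the first factor belongs to $\cL(\dot{H}^0)$ with norm $\norm{\Gamma}{\cL(\dot{H}^{-1},\dot{H}^0)}$ by the identity above. It therefore suffices to prove that the second factor lies in $\cL_2(\dot{H}^0)$: granting this, \eqref{eq:schatten_bound_1} gives
\begin{equation*}
\norm{\Gamma}{\cL^0_2} \le \norm{\Gamma\Lambda^{\frac12}}{\cL(\dot{H}^0)}\norm{\Lambda^{-\frac12}Q^{\frac12}}{\cL_2(\dot{H}^0)} = \norm{\Gamma}{\cL(\dot{H}^{-1},\dot{H}^0)}\norm{\Lambda^{-\frac12}Q^{\frac12}}{\cL_2(\dot{H}^0)},
\end{equation*}
which is the assertion with $C = \norm{\Lambda^{-\frac12}Q^{\frac12}}{\cL_2(\dot{H}^0)}$, and in particular shows $\cL(\dot{H}^{-1},\dot{H}^0)\subset\cL^0_2$. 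To see that $\Lambda^{-\frac12}Q^{\frac12}\in\cL_2(\dot{H}^0)$, I would use $\norm{\Lambda^{-\frac12}Q^{\frac12}}{\cL_2(\dot{H}^0)}^2 = \trace\big(\Lambda^{-\frac12}Q\Lambda^{-\frac12}\big)$ (the operator being nonnegative and self-adjoint) and factor once more as $\Lambda^{-\frac12}Q\Lambda^{-\frac12} = \Lambda^{-\beta}\big(\Lambda^{\beta-\frac12}Q\Lambda^{-\frac12}\big)$; since $\beta\ge0$ and the eigenvalues of $\Lambda$ are bounded below by $\lambda_1>0$, the operator $\Lambda^{-\beta}$ is bounded on $\dot{H}^0$ with $\norm{\Lambda^{-\beta}}{\cL(\dot{H}^0)}\le\lambda_1^{-\beta}$, so that by~\eqref{eq:schatten_bound_1} and the hypothesis $\norm{\Lambda^{\beta-\frac12}Q\Lambda^{-\frac12}}{\trace}<\infty$ one obtains $C^2 = \norm{\Lambda^{-\frac12}Q\Lambda^{-\frac12}}{\trace}\le\lambda_1^{-\beta}\norm{\Lambda^{\beta-\frac12}Q\Lambda^{-\frac12}}{\trace}<\infty$.

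The argument is essentially just repeated use of the ideal property, so there is no real difficulty; the one point demanding care — the nearest thing to an obstacle — is checking that the two factorizations above are genuine operator identities between the appropriate scales of spaces, i.e.\ that the various (extended) fractional powers of $\Lambda$ compose as written. This is exactly what the spectral-calculus conventions for $\Lambda$ and the extension result of~\cite{BKM18} recalled after Assumption~\ref{assumptions:1} are there to guarantee, and once it is in hand every estimate above is a one-line consequence of~\eqref{eq:schatten_bound_1}--\eqref{eq:schatten_bound_2}.
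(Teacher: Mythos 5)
Your proof is correct and follows essentially the same route as the paper's: both reduce to the hypothesis by inserting $\Lambda^{-\beta}\Lambda^{\beta-\frac{1}{2}}$ and invoking the ideal property~\eqref{eq:schatten_bound_1}, arriving at the same constant $C^2 = \norm{\Lambda^{-\beta}}{\cL(\dot{H}^0)}\norm{\Lambda^{\beta-\frac{1}{2}}Q\Lambda^{-\frac{1}{2}}}{\trace}$. The only cosmetic difference is that the paper starts from $\norm{\Gamma}{\cL_2^0}^2 = |\trace(\Gamma Q \Gamma^*)|$ and uses cyclicity of the trace~\eqref{eq:trace_cyclic}, whereas you start from $\norm{\Gamma}{\cL_2^0} = \norm{\Gamma Q^{\frac{1}{2}}}{\cL_2(\dot{H}^0)}$ and factor through $Q^{\frac{1}{2}}$.
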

	\begin{proof}
		The claim is a consequence of~\eqref{eq:HSvTrace},~\eqref{eq:trace_cyclic} and~\eqref{eq:schatten_bound_1} via the calculation
		\begin{align*}
		\norm{\Gamma}{\cL_2^0}^2 
		&= \left|\trace\left(\Gamma Q  \Gamma^* \right)\right|
		=  \left|\trace\left(\Gamma^* \Gamma Q \right)\right|
		= \left|\trace\left(\left(\Gamma\Lambda^\frac{1}{2}\right)^* \Gamma \Lambda^{\frac{1}{2}} \Lambda^{- \beta} \Lambda^{\beta - \frac{1}{2}} Q \Lambda^{- \frac{1}{2}} \right)\right| \\
		&\le \norm{\Gamma \Lambda^{\frac{1}{2}}}{\cL(\dot{H}^0)}^2
		\norm{\Lambda^{-\beta}}{\cL(\dot{H}^0)} \norm{\Lambda^{\beta-\frac{1}{2}}Q \Lambda^{-\frac{1}{2}}}{\trace} \\
		&= \norm{\Gamma }{\cL(\dot{H}^{-1},\dot{H}^{0})}^2
		\norm{\Lambda^{-\beta}}{\cL(\dot{H}^0)} \norm{\Lambda^{\beta-\frac{1}{2}}Q \Lambda^{-\frac{1}{2}}}{\trace}.\qedhere
		\end{align*}
	\end{proof}
	
	The reason for the alternative Assumption~\ref{assumptions:3}\ref{ass:3:altQ} to Assumption~\ref{assumptions:3}\ref{ass:3:origQ} is that  the condition $\norm{\Lambda^{\beta-\frac{1}{2}}Q \Lambda^{-\frac{1}{2}}}{\trace} < \infty$ is hard to interpret in the common (cf. \cite[Corollary~4.9]{B05}, where $Q$ is trace-class and induced by a covariance kernel) case that $\Lambda Q \neq Q \Lambda$. Note that Assumption~\ref{assumptions:3}\ref{ass:3:altQ} implies that $\norm{\Lambda^{\frac{\beta-1}{2}}}{\cL_2^0} < \infty$ for all $\beta \le 1$.
	
	As a first step towards our weak convergence result, we need the following regularity estimate for the Malliavin derivatives of the first component of the mild solution given by~\eqref{eq:mild_solution} and its approximation.
	\begin{lemma}
		\label{lem:Malliavin_2}
		Let Assumptions~\ref{assumptions:1} and~\ref{assumptions:2} hold. Let $X$ be the mild solution given by~\eqref{eq:mild_solution} of the stochastic wave equation and let $\tilde{X}$ be the fully discrete approximation given by~\eqref{eq:fully_discrete_mild_solution}. Under Assumption~\ref{assumptions:3}\ref{ass:3:G}-\ref{ass:3:origQ},  
		$X_1(t) \in \D^{1,p}(\dot{H}^0)$ for all $p \ge 2, t \in [0,T]$, and
		\begin{equation}
		\label{eq:malliavin_regularity_mild}
		\sup_{t \in [0,T]} \norm{D X_1(t) \Lambda^{\frac{1}{2}}}{L^2([0,T]\times\Omega,\cL(\dot{H}^0))} < \infty.
		\end{equation}
		Furthermore, $\tilde X_1(t_n) \in \D^{1,p}(\dot{H}^0)$ for all $p \ge 2$, $n \in \{0,1,\ldots,N_{\Delta t}\}$, and
		\begin{equation}
		\label{eq:malliavin_regularity_discrete}
		\sup_{\substack{n \in \{0,1,\ldots,N_{\Delta t}\} \\ h, \Delta t \in (0,1]}} \norm{D \tilde X_1(t_n) \Lambda^{\frac{1}{2}}}{L^2([0,T]\times\Omega,\cL(\dot{H}^0))} < \infty.
		\end{equation}
		If Assumption~\ref{assumptions:3}\ref{ass:3:altQ} holds in place of Assumption~\ref{assumptions:3}\ref{ass:3:origQ}, similar statements hold with~\eqref{eq:malliavin_regularity_mild} replaced by
		\begin{equation}
		\label{eq:malliavin_regularity_mild_2}
		\sup_{t \in [0,T]} \norm{\Lambda^{\frac{1}{2}} D X_1(t) }{L^2([0,T]\times\Omega,\cL(\dot{H}^0))} < \infty,
		\end{equation}
		and~\eqref{eq:malliavin_regularity_discrete} by
		\begin{equation}
		\label{eq:malliavin_regularity_discrete_2}
		\sup_{\substack{n \in \{0,1,\ldots,N_{\Delta t}\} \\ h, \Delta t \in (0,1]}} \norm{\Lambda^{\frac{1}{2}} D \tilde X_1(t_n) }{L^2([0,T]\times\Omega,\cL(\dot{H}^0))} < \infty.
		\end{equation}
	\end{lemma}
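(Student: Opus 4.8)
The idea is that the Malliavin derivative of the first component satisfies a linear Volterra equation, which I would solve by a Gronwall argument uniform in $\omega$ (and, in the discrete case, in $h$ and $\Delta t$). Since $G(t)=I$ by Assumption~\ref{assumptions:3}\ref{ass:3:G}, the integrands of the stochastic convolutions in~\eqref{eq:mild_solution} and~\eqref{eq:fully_discrete_mild_solution} are deterministic, so by~\eqref{eq:malliavin_on_ito_integral} these convolutions are Malliavin differentiable, with $D_r\int_0^tE(t-s)B\dd W(s)=\chi_{(0,t]}(r)E(t-r)B$ and the analogous identity with $\tilde E$ in place of $E$. For the mild solution I would argue along the Picard iterates defining~\eqref{eq:mild_solution}: each iterate $X_1^{(n)}(t)$ is Malliavin differentiable by induction, using that $F(s,\cdot)\in\cG^1_\mathrm{b}(\dot{H}^0,\dot{H}^{-1})$ with linear growth (cf.~\eqref{eq:ass2:derivativebounded}, which relies on Assumption~\ref{assumptions:2}), so that the chain rule~\eqref{eq:malliavin_chain_rule} applies with $q=1$; that $D$ commutes with the bounded operators $B$, $E(t-s)$ and $P^1$; and the commutation~\eqref{eq:malliavin_on_det_integral} of $D$ with the drift integral, whose hypothesis is met because~\eqref{eq:ass2:derivativebounded} and~\eqref{eq:schatten_bound_1} propagate a finite $\sup_s\norm{DX_1^{(n)}(s)}{L^2([0,T]\times\Omega,\cL_2^0)}$ through the iteration (this finiteness being part of the a priori bound below). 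Passing to the limit via $L^p(\Omega)$-convergence of the iterates, that uniform bound, and the closedness of $D$, one gets $X_1(t)\in\D^{1,p}(\dot{H}^0)$ and
\[
D_rX_1(t)=\chi_{(0,t]}(r)\,P^1E(t-r)B+\int_r^tP^1E(t-s)B\,F'(s,X_1(s))\,D_rX_1(s)\dd s,
\]
for a.e.\ $r$, with $D_rX_1(s)=0$ for $r>s$. For $\tilde X_1$ no limiting argument is needed: $\tilde X_1(t_n)$ is, step by step, a $\cG^1_\mathrm{p}$-function of finitely many Gaussian increments, hence lies in $\D^{1,p}(\dot{H}^0)$ for every $p$ by induction on $n$, and~\eqref{eq:fully_discrete_mild_solution} differentiates to the same identity with $P^1\tilde E(\cdot)B$ and $F'(\dfloor{s},\tilde X_1(\dfloor{s}))$ replacing $P^1E(\cdot)B$ and $F'(s,X_1(s))$.

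\emph{The a priori estimate.} Set $Z_r(t)=D_rX_1(t)\Lambda^{\frac{1}{2}}$. Since $\Lambda^{\frac{1}{2}}\in\cL(\dot{H}^0,\dot{H}^{-1})$ and the operator $D_rX_1(t)$ defined by the Volterra equation maps $\dot{H}^{-1}$ into $\dot{H}^0$ --- its inhomogeneous term being $Q$-independent, and lying in $\cL_2^0$ by Lemma~\ref{lem:Malliavin_1} --- this is a well-defined element of $\cL(\dot{H}^0)$. Multiplying the Volterra equation on the right by $\Lambda^{\frac{1}{2}}$ and using~\eqref{eq:P^1EB_Theta_commute} to write $P^1E(t-s)B\Lambda^{\frac{1}{2}}=\Lambda^{\frac{1}{2}}P^1E(t-s)B$, which by~\eqref{eq:sine_time_regularity} (with $\alpha=1$) is bounded on $\dot{H}^0$ uniformly in $t-s$, controls the inhomogeneous term; factoring $P^1E(t-s)B=(P^1E(t-s)B\Lambda^{\frac{1}{2}})\Lambda^{-\frac{1}{2}}$ and using $\norm{\Lambda^{-\frac{1}{2}}F'(s,X_1(s))}{\cL(\dot{H}^0)}\le C$ uniformly in $\omega$ (from~\eqref{eq:ass2:derivativebounded}) bounds the integrand by $C\norm{Z_r(s)}{\cL(\dot{H}^0)}$. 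Hence $\norm{Z_r(t)}{\cL(\dot{H}^0)}\le C+C\int_r^t\norm{Z_r(s)}{\cL(\dot{H}^0)}\dd s$, and Gronwall's inequality gives $\norm{Z_r(t)}{\cL(\dot{H}^0)}\le Ce^{CT}$ for $0\le r\le t\le T$, with $Z_r(t)=0$ for $r>t$. As this bound is deterministic, squaring and integrating over $r$ yields~\eqref{eq:malliavin_regularity_mild}, uniformly in $t$. Under Assumption~\ref{assumptions:3}\ref{ass:3:altQ} in place of Assumption~\ref{assumptions:3}\ref{ass:3:origQ}, the same argument applies to $W_r(t)=\Lambda^{\frac{1}{2}}D_rX_1(t)$ with $\Lambda^{\frac{1}{2}}$ moved to the left, using the corresponding commutations to keep all factors in $\cL(\dot{H}^0)$, and gives~\eqref{eq:malliavin_regularity_mild_2}; the commutativity of $\Lambda^{-1/2}$ with $\phi''$ in~\ref{ass:3:altQ} is not used here.

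\emph{The discrete case, and the main obstacle.} For $\tilde X_1$ the estimate is identical, now starting from the discrete Volterra equation; the only extra ingredient is uniformity in $h$ and $\Delta t$. By Lemma~\ref{lem:semigroup_error} with $\alpha=0$, $\norm{P^1(\tilde E(\tau)-E(\tau))B\Lambda^{\frac{1}{2}}}{\cL(\dot{H}^0)}\le C$ uniformly in $h$, $\Delta t$ and $\tau\in[0,T]$, hence $\norm{P^1\tilde E(\tau)B\Lambda^{\frac{1}{2}}}{\cL(\dot{H}^0)}\le C$ uniformly; together with the $\omega$- and $u$-uniform bound~\eqref{eq:ass2:derivativebounded}, this gives $a_n\le C+C\Delta t\sum_{j<n}a_j$ for $a_n=\norm{P^1D\tilde X(t_n)\Lambda^{\frac{1}{2}}}{\cL(\dot{H}^0)}$, and the discrete Gronwall lemma (Lemma~\ref{lem:Gronwall}) yields $a_n\le Ce^{CT}$ uniformly in $n$, $h$, $\Delta t$, which gives~\eqref{eq:malliavin_regularity_discrete}, and~\eqref{eq:malliavin_regularity_discrete_2} in the same way. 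I expect the genuinely delicate point to be the first step for the mild solution: verifying along the Picard scheme the hypotheses of the chain rule~\eqref{eq:malliavin_chain_rule} and of~\eqref{eq:malliavin_on_det_integral} (which forces one to carry the uniform Malliavin bound through the iteration), the passage to the limit by closedness of $D$, and the bookkeeping of operator domains and ranges that makes the products with $\Lambda^{\pm1/2}$ bona fide elements of $\cL(\dot{H}^0)$; the Gronwall estimates and the discrete construction are routine.
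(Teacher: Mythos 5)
Your overall strategy is the paper's: differentiate the Picard iterates using the chain rule and the commutation of $D$ with bounded operators and with Lebesgue/It\^o integrals, reduce everything to the two uniform operator bounds $\sup_\tau\norm{P^1E(\tau)B\Lambda^{1/2}}{\cL(\dot{H}^0)}<\infty$ (resp.\ its discrete analogue via Lemma~\ref{lem:semigroup_error}) and $\norm{\Lambda^{-1/2}F'(t,u)}{\cL(\dot{H}^0,\dot{H}^{0})}\le C$ from~\eqref{eq:ass2:derivativebounded}, run a (discrete) Gronwall argument, and use Lemma~\ref{lem:Malliavin_1} to pass between $\cL(\dot{H}^{-1},\dot{H}^0)$ and $\cL_2^0$. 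The discrete part and the treatment of Assumption~\ref{assumptions:3}\ref{ass:3:altQ} (including your remark that the $\phi''$-commutativity is not needed here) match the paper.

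The one step that does not go through as written is the limit passage for the mild solution. Closedness of $D$ requires \emph{convergence} of the sequence $(DX_1^n(t))_n$ in $L^p(\Omega,L^2([0,T],\cL_2^0))$, not merely a uniform bound; a uniform bound plus $L^p$-convergence of $X_1^n(t)$ only yields $X_1(t)\in\D^{1,p}$ after an additional weak-compactness argument (for $p>1$), and, more importantly, it does not by itself identify $D X_1(t)$ as the solution of the limiting Volterra equation --- yet your entire a priori estimate is a Gronwall argument run \emph{on that equation}. The paper closes exactly this gap by proving that $(DX_1^n)_n$ converges in $\cC([0,T],L^p(\Omega\times[0,T],\cL(\dot{H}^{-1},\dot{H}^0)))$: the map $Y\mapsto\int_0^{\boldsymbol{\cdot}}P^1E(\cdot-r)BF'(r,X(r))Y(r)\dd r$ is shown to be a contraction for an exponentially weighted norm $\norm{Y}{\sigma}=\sup_t e^{-\sigma t}\norm{Y(t)}{}$ with $\sigma$ large (the same bounds you use in your Gronwall step), so the Banach fixed point theorem gives a genuine limit $\hat Y$, and then plain closedness of $D$ identifies $\hat Y(t)=DX_1(t)$; the uniform-in-$t$ bound~\eqref{eq:malliavin_regularity_mild} is then immediate from membership of $\hat Y$ in that space, with no need for a separate pathwise Gronwall on the limit equation (which would also require a version of the Volterra identity valid for all $t$ simultaneously for a.e.\ $(r,\omega)$). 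So: same estimates, but you should replace ``uniform bound $+$ closedness'' by the contraction argument (or supply the weak-compactness lemma \emph{and} justify passing to the limit in the iteration formula).
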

	\begin{proof}
		We start by showing~\eqref{eq:malliavin_regularity_mild}. Define the sequence $(X^n_1)_{n=0}^\infty$ by $X^0_1 = 0$ and, for $t \in [0,T]$ and $n \ge 0$,  
		\begin{equation}
		\label{eq:Xn-sequence}
		X^{n+1}_1(t) = P^1 E(t) x_0 + \int^t_0 P^1 E(t-s) B F(s,X^n_1(s)) \dd s + \int^t_0 P^1 E(t-s)B \dd W(s).
		\end{equation}
		Since the existence result~\cite[Theorem~7.2]{DPZ14} that we cited in Theorem~\ref{thm:X_spat_reg} is proven via a fixed point argument for this sequence, it follows that $\lim_n X_1^{n}(t) = X_1(t)$ in $L^p(\Omega,\dot{H}^0)$ for all $t \in [0,T]$ and $p \ge 2$. By~\eqref{eq:B_bound_2}, \eqref{eq:P^1EB_Theta_commute} and~\eqref{eq:semigroup_bound}, we have \begin{equation}
		\label{eq:mallPEBbound}
		\sup_{t \in [0,T]}
		\norm{P_1E(t)B}{L(\dot{H}^{\alpha-1},\dot{H}^{\alpha})} < \infty.
		\end{equation} This implies, along with Assumption~\ref{assumptions:2}, that $P^1 E(t-s)BF(t, \cdot) \in \cG^{1}_{\mathrm{b}}(\dot{H}^0,\dot{H}^{1-\min(\mu,1)}) \subset \cG^{1}_{\mathrm{b}}(\dot{H}^0,\dot{H}^0)$ for all $s,t \in [0,T]$. The chain rule for the Malliavin derivative is then applicable so that $P^1 E(t-s)BF(t,X^n_1(t)) \in \D^{1,2}(\dot{H}^0)$ for all $s,t \in [0,T]$ as long as $X_1^n(t) \in \D^{1,2}(\dot{H}^0)$ for all $t \in [0,T]$, and that in this case $DP^1 E(t-s)BF(t,X^n_1(t)) = P^1 E(t-s)BF'(t,X^n_1(t)) DX^n_1(t)$ for all $s,t \in [0,T]$. Therefore, we may apply $D$ to both sides of~\eqref{eq:Xn-sequence} and hence, by the fact that the Malliavin derivative of a deterministic element is zero,~\eqref{eq:malliavin_on_ito_integral} and~\eqref{eq:malliavin_on_det_integral}, we get 
		\begin{equation*}
		%\label{eq:DXn-sequence}
		D X^{n+1}_1(t) = \int^t_0 P^1 E(t-r) B F'(r,X^n_1(r)) D X^n_1(r) \dd r + \chi_{(0,t]}(\cdot) P^1 E(t-\cdot) B
		\end{equation*}
		for all $t \in [0,T]$. Our aim is now to show that the sequence $(DX^n_1)_{n=0}^\infty$ has a limit in $\cC([0,T],L^p(\Omega\times[0,T],\cL(\dot{H}^{-1},\dot{H}^0)))$ for any $p \ge 2$. To this end, note first that the mapping $[0,T] \ni t \mapsto \chi_{(0,t]}(\cdot) P^1 E(t-\cdot) B$ is in this space by~\eqref{eq:mallPEBbound} with $\alpha= 0$. Next, we show that there is an equivalent norm $\norm{\cdot}{\sigma}$ on $\cC([0,T],L^p(\Omega\times[0,T],\cL(\dot{H}^{-1},\dot{H}^0)))$ such that the mapping
		\begin{equation*}
		[0,T] \ni t \mapsto I(X,Y)_t = \int^t_0 P^1 E(t-r) B F'(r,X(r)) Y(r) \dd r,
		\end{equation*}
		where $X \in L^2(\Omega\times[0,T],\dot{H}^0)$ and $Y \in \cC([0,T],L^p(\Omega\times[0,T],\cL(\dot{H}^{-1},\dot{H}^0)))$, fulfills 
		\begin{equation}
		\label{eq:I_contraction}
		\norm{I(X,Y)}{\sigma} \le \alpha \norm{Y}{\sigma}
		\end{equation}
		for some $\alpha \in [0,1)$ and all $X \in L^2(\Omega\times[0,T],\dot{H}^0)$. We choose, for $\sigma \ge 0$ to be determined, $\norm{Y}{\sigma} = \sup_{t \in [0,T]} e^{-\sigma t} \norm{Y(t)}{L^p(\Omega\times[0,T],\cL(\dot{H}^{-1},\dot{H}^0))}$ and note that for $t \in [0,T]$, by~\eqref{eq:semigroup_bound} and~\eqref{eq:ass2:derivativebounded},
		\begin{align*}
		&\norm{I(X,Y)_t}{L^p(\Omega\times[0,T],\cL(\dot{H}^{-1},\dot{H}^0))} \\
		&\quad\le \int_{0}^{t} \norm{P^1 E(t-r)}{\cL(\cH,\dot{H}^0)} \norm{B F'(r,X(r))Y(r)}{L^p(\Omega\times[0,T],\cL(\dot{H}^{-1},\cH))} \dd r \\
		&\quad \lesssim \int^t_0 \norm{Y(r)}{L^p(\Omega\times[0,T],\cL(\dot{H}^{-1},\dot{H}^0))} \dd r \le \norm{Y}{\sigma} \int^t_0 e^{\sigma r} \dd r = \frac{e^{\sigma t}-1}{\sigma} \norm{Y}{\sigma}.
		\end{align*}
		This implies that $\norm{I(X,Y)}{\sigma} \lesssim \sigma^{-1} \norm{Y}{\sigma}$ so that~\eqref{eq:I_contraction} is fulfilled for sufficiently large $\sigma$. By the Banach fixed point theorem, therefore, $(DX^n_1)_{n=0}^\infty$ has a limit $\hat Y$ in $\cC([0,T],L^p(\Omega\times[0,T],\cL(\dot{H}^{-1},\dot{H}^0)))$. In particular, $\lim_n DX^n_1(t) = \hat{Y}(t)$ in $L^p(\Omega\times[0,T],\cL(\dot{H}^{-1},\dot{H}^0))$ for all $t \in [0,T]$. Thus, by Lemma~\ref{lem:Malliavin_1}, $\lim_n DX^n_1(t) = \hat{Y}(t)$ in $L^p(\Omega\times[0,T],\cL_2^0)$ for all $t \in [0,T]$. Since $D$ is closed and since $\lim_n X_1^{n}(t) = X_1(t)$ in $L^p(\Omega,\dot{H}^0)$ for all $t \in [0,T]$ this implies that $X_1(t) \in \D^{1,p}(\dot{H}^0)$ for all $t \in [0,T]$, and that $\hat{Y}(t) = D X_1(t)$ for all $t \in [0,T]$. With this, we have deduced~\eqref{eq:malliavin_regularity_mild}.
		
		Next, we move on to the corresponding results for the approximation. Since $P^1 X^0_{h,\Delta t} \in \D^{1,p}(\dot{H}^0)$ for all $p \ge 2$, a proof by induction using~\eqref{eq:fully_discrete_scheme}, \eqref{eq:malliavin_on_ito_integral} and~\eqref{eq:malliavin_on_det_integral} shows that $P^1 X^{n}_{h,\Delta t} = \tilde X_1(t_n) \in \D^{1,p}(\dot{H}^0)$ for all $p \ge 2, n \in \{0,1,\ldots,N_{\Delta t}\}$ and we have
		\begin{equation*}
		D P^1 X^{n}_{h,\Delta t} = \Delta t \sum^{n-1}_{j=0} P^1 E^{n-j}_{h,\Delta t} P_h B F(t_j,P^1 X^{j}_{h,\Delta t}) D P^1 X^{j}_{h,\Delta t} + \sum^{n-1}_{j=0} \chi_{(t_j,t_{j+1}]}(\cdot) P^1 E^{n-j}_{h,\Delta t} P_h B.
		\end{equation*}
		Therefore,~\eqref{eq:malliavin_regularity_discrete} follows by Lemma~\ref{lem:Gronwall} and we omit the details. 
		
		The results~\eqref{eq:malliavin_regularity_mild_2} and~\eqref{eq:malliavin_regularity_discrete_2} are deduced in the same way, by noting that if $\trace(Q) < \infty$, then $\cL(\dot{H}^0, \dot{H}^1) \subset \cL(\dot{H}^0) \subset \cL^0_2$ (cf. \cite[Lemma~2.3.7]{PR07}) and by using $\alpha=1$ instead of $\alpha = 0$ in~\eqref{eq:mallPEBbound}.
	\end{proof}
	
	The following error representation (cf. \cite[Theorem~5.9]{K14}) is a direct consequence of the mean value theorem,~\eqref{eq:malliavin_integration_by_parts}, \eqref{eq:malliavin_chain_rule}, and the facts that $D_\cG\phi(P^1 \cdot) = (P^1)^* \phi'(P^1 \cdot)$ and $D^2_\cG\phi(P^1 \cdot) = (P^1)^* \phi''(P^1 \cdot) P^1$, where $D_{\mathrm{\cG}}$ denotes the G\^ateaux derivative and $(P^1)^*$ the adjoint of $P^1$.
	
	\begin{proposition}
		\label{prop:error_rep}
		Let Assumptions~\ref{assumptions:1},~\ref{assumptions:2} and~\ref{assumptions:3} hold. Let $X$ be the mild solution given by~\eqref{eq:mild_solution} of the stochastic wave equation and let $\tilde{X}$ be the fully discrete approximation given by~\eqref{eq:fully_discrete_mild_solution}. Then, the weak error of the approximation satisfies
		\begin{align*}
		&\E\left[\phi(\tilde X_1(T))-\phi(X_1(T))\right] \\
		&\quad= \int^1_0 \E\Big[\Big\langle \phi'\left(X_1(T)+s(\tilde X_1 (T)-X_1(T))\right),
		P^1\left(\tilde{E}(T)-E(T)\right)x_0 \\
		&\quad\quad+ \int^T_0 P^1\left(\tilde{E}(T-r)BF(\dfloor{r}, \tilde{X}_1(\dfloor{r})) - E(T-r)BF(r, X_1(r))\right) \dd r
		\Big\rangle_{\dot{H}^0} \Big] \dd s \\
		&\quad\quad +
		\int^1_0 \int^T_0 \E\Big[\Big\langle \phi''\left(X_1(T)+s(\tilde X_1(T)-X_1(T))\right) \left( (1-s) D_r X_1(T) + sD_r \tilde X_1(T) \right), \\
		&\quad\hspace{30mm} P^1\left(\tilde{E}(T-r)-E(T-r)\right)B \Big\rangle_{\cL_2^0} \dd r \dd s.
		\end{align*}
	\end{proposition}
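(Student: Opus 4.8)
The plan is to combine the mean value theorem for $\phi$ with the mild solution formulas~\eqref{eq:mild_solution} and~\eqref{eq:fully_discrete_mild_solution}, and then to treat the stochastic integral term that appears by Malliavin integration by parts. Write $Z_s := X_1(T) + s\bigl(\tilde X_1(T) - X_1(T)\bigr) = (1-s)X_1(T)+s\tilde X_1(T)$ for $s \in [0,1]$. Since $\phi \in \cG^2_\mathrm{p}(\dot{H}^0,\R) \subset \cG^1_\mathrm{p}(\dot{H}^0,\R)$, the mean value theorem recalled in Section~\ref{sec:stoch_wave} gives, pathwise,
\[
\phi(\tilde X_1(T)) - \phi(X_1(T)) = \int_0^1 \inpro[\dot{H}^0]{\phi'(Z_s)}{\tilde X_1(T) - X_1(T)} \dd s .
\]
Taking expectations and pulling the $s$-integral out by Fubini's theorem is justified because $\phi'$ is polynomially bounded while $X_1(T)$ and $\tilde X_1(T)$ have finite moments of all orders by Theorem~\ref{thm:X_spat_reg} and~\eqref{eq:fully_discrete_spatial_regularity}. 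Subtracting~\eqref{eq:mild_solution} from~\eqref{eq:fully_discrete_mild_solution} at $t = T$, applying $P^1$ and inserting $G \equiv I$ from Assumption~\ref{assumptions:3}\ref{ass:3:G}, I would split $\tilde X_1(T) - X_1(T) = \Xi + \int_0^T \Phi(r)\dd W(r)$, where $\Xi$ is the sum of the initial-value and drift differences forming the first bracket of the claim and $\Phi(r) := P^1\bigl(\tilde E(T-r) - E(T-r)\bigr)B$. The contribution of $\Xi$ is then exactly the first term of the asserted representation.

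It remains to rewrite $\int_0^1 \E\bigl[\inpro[\dot{H}^0]{\phi'(Z_s)}{\int_0^T \Phi(r)\dd W(r)}\bigr]\dd s$ via~\eqref{eq:malliavin_integration_by_parts}, after checking its hypotheses. The integrand $\Phi$ is deterministic, hence predictable; moreover $\norm{P^1 E(t)B}{\cL(\dot{H}^{-1},\dot{H}^0)}\le1$ and $\norm{P^1\tilde E(t)B}{\cL(\dot{H}^{-1},\dot{H}^0)}\le1$ by~\eqref{eq:semigroup_bound},~\eqref{eq:fully_discrete_stability} and~\eqref{eq:B_bound_2}, so by Lemma~\ref{lem:Malliavin_1} (under Assumption~\ref{assumptions:3}\ref{ass:3:origQ}) or by $\trace(Q)<\infty$ together with $\cL(\dot{H}^0)\subset\cL_2^0$ and~\eqref{eq:sine_time_regularity} (under Assumption~\ref{assumptions:3}\ref{ass:3:altQ}) one has $\Phi \in L^2([0,T],\cL_2^0)$. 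For the $\dot{H}^0$-valued random variable $\phi'(Z_s)$, Lemma~\ref{lem:Malliavin_2} gives $X_1(T), \tilde X_1(T) \in \D^{1,p}(\dot{H}^0)$ for every $p \ge 2$; since $\D^{1,p}(\dot{H}^0)$ is a vector space and $D$ is linear, $Z_s \in \D^{1,p}(\dot{H}^0)$ with $D_r Z_s = (1-s)D_r X_1(T) + s\,D_r\tilde X_1(T)$, and since $\phi' \in \cG^1_\mathrm{p}(\dot{H}^0,\dot{H}^0)$ has polynomially bounded derivative $\phi''$, the chain rule~\eqref{eq:malliavin_chain_rule} yields $\phi'(Z_s) \in \D^{1,p}(\dot{H}^0)$ and
\[
D_r\phi'(Z_s) = \phi''(Z_s)\bigl((1-s)D_r X_1(T) + s\,D_r\tilde X_1(T)\bigr).
\]
Inserting this into~\eqref{eq:malliavin_integration_by_parts} and interchanging the $s$- and $r$-integrals with $\E$ by Fubini once more — the $\cL_2^0$-pairing being integrable in $(r,s,\omega)$ thanks to Lemma~\ref{lem:Malliavin_2} and the moment bounds on $Z_s$ — produces precisely the second term of the representation. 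Adding the two contributions completes the proof.

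The part that requires genuine care is the Malliavin-calculus bookkeeping: that $\phi'(Z_s) \in \D^{1,2}(\dot{H}^0)$, that the chain rule~\eqref{eq:malliavin_chain_rule} applies to $\phi'$ with the correct polynomial exponent, and that $\inpro[\cL_2^0]{D_r\phi'(Z_s)}{\Phi(r)}$ is well defined and integrable. This hinges on Lemma~\ref{lem:Malliavin_2}, which supplies $DX_1(T)$ and $D\tilde X_1(T)$ with the appropriate operator-norm integrability, and on the split into the two variants of Assumption~\ref{assumptions:3}: under~\ref{ass:3:origQ} one uses that $D_r X_1(T)\Lambda^{1/2}$ is bounded and $\Phi(r)$ is Hilbert--Schmidt (Lemma~\ref{lem:Malliavin_1}), while under~\ref{ass:3:altQ} one uses that $\Lambda^{1/2}D_r X_1(T)$ is bounded together with $\Lambda^{-1/2}\phi''(Z_s) = \phi''(Z_s)\Lambda^{-1/2}$ and $\trace(Q)<\infty$. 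Finally, the identifications $D_{\cG}\phi(P^1\cdot) = (P^1)^*\phi'(P^1\cdot)$ and $D^2_{\cG}\phi(P^1\cdot) = (P^1)^*\phi''(P^1\cdot)P^1$ let one carry out the mean value theorem and integration by parts at the $\cH$-level, where the mild solution formulas live, and translate back to $\dot{H}^0$; the remaining computations are routine.
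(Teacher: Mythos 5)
Your argument is correct and follows exactly the route the paper indicates for this proposition (which it states as "a direct consequence" of the mean value theorem, Malliavin integration by parts~\eqref{eq:malliavin_integration_by_parts}, the chain rule~\eqref{eq:malliavin_chain_rule}, and the identification of the derivatives of $\phi\circ P^1$, without writing out the details): you split $\tilde X_1(T)-X_1(T)$ into the deterministic-plus-drift part and the stochastic integral with kernel $P^1(\tilde E(T-\cdot)-E(T-\cdot))B$, and convert the latter via integration by parts and the chain rule, with the requisite Malliavin regularity supplied by Lemma~\ref{lem:Malliavin_2}. The bookkeeping you carry out (predictability and square-integrability of the deterministic integrand in $\cL_2^0$, $\phi'(Z_s)\in\D^{1,2}(\dot H^0)$, Fubini justified by the moment bounds) is exactly what is needed and is consistent with the paper.
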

	
	We are now equipped to show a weak convergence result.
	
	\begin{theorem}[Weak convergence]
		\label{thm:weak_convergence}
		Let $X$ be the mild solution given by~\eqref{eq:mild_solution} of the stochastic wave equation and let $\tilde{X}$ be the fully discrete approximation given by~\eqref{eq:fully_discrete_mild_solution}. 
		Suppose that Assumptions~\ref{assumptions:1}, \ref{assumptions:FEM}, \ref{assumptions:2} and~\ref{assumptions:3} all hold and let $r' = \min(\max(2\nu,\beta),1+\theta,\delta)$. Then, for $\mu \le 1$, there exists a constant $C>0$ such that, for all $h, \Delta t \in (0,1]$,
		\begin{equation*}
		\left|\E\left[\phi(X_1(T))-\phi(\tilde X_1(T))\right]\right| \le C \big(h^{r' \frac{\kappa}{\kappa+1}} + {\Delta t}^{\min(r' \frac{\rho}{\rho+1},\eta,1)} \big).
		\end{equation*}
		If, on the other hand, $1 < \mu \le 2$, then there exists a constant $C>0$ such that, for all $h, \Delta t \in (0,1]$,
		\begin{equation*}
		\left|\E\left[\phi(X_1(T))-\phi(\tilde X_1(T))\right]\right| \le C \big(h^{r' \frac{\kappa}{\kappa+1}+1-\mu} + h^{1-\mu}{\Delta t}^{\min(r' \frac{\rho}{\rho+1},\eta,1)} \big).
		\end{equation*}
	\end{theorem}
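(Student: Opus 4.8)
The plan is to start from the error representation in Proposition~\ref{prop:error_rep}, which splits $\E[\phi(\tilde X_1(T))-\phi(X_1(T))]$ into three groups of terms: an initial-data term $\mathrm{T}_1$, in which $\phi'$ is paired in $\dot H^0$ with $P^1(\tilde E(T)-E(T))x_0$; a drift term $\mathrm{T}_2$, in which $\phi'$ is paired with the time integral of $P^1(\tilde E(T-r)BF(\dfloor{r},\tilde X_1(\dfloor{r}))-E(T-r)BF(r,X_1(r)))$; and a stochastic term $\mathrm{T}_3$, in which $\phi''$, applied to $(1-s)D_rX_1(T)+sD_r\tilde X_1(T)$, is paired in $\cL_2^0$ with $P^1(\tilde E(T-r)-E(T-r))B$. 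Throughout, $\phi'$ and $\phi''$ are polynomially bounded and all occurring processes have finite moments in the relevant norms by Theorems~\ref{thm:X_spat_reg} and~\ref{thm:mild_holder}, by~\eqref{eq:fully_discrete_spatial_regularity}, and by Lemma~\ref{lem:Malliavin_2}, so such factors may be pulled out by Cauchy--Schwarz and Hölder at negligible cost; I treat the $\mu\le 1$ case first.

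For $\mathrm{T}_1$ I would write $\norm{P^1(\tilde E(T)-E(T))x_0}{\dot H^0}=\norm{P^1(\tilde E(T)-E(T))\Theta^{-\delta/2}\Theta^{\delta/2}x_0}{\dot H^0}$ and apply Lemma~\ref{lem:semigroup_error} with $\alpha=\delta$ together with Assumption~\ref{assumptions:1}\ref{assumptions:1:x0}, which bounds $\mathrm{T}_1$ by a constant times $h^{\min(\delta\kappa/(\kappa+1),\kappa)}+\Delta t^{\min(\delta\rho/(\rho+1),1)}$; since $r'\le\delta$ this is dominated by the asserted rate. For $\mathrm{T}_2$ I would mimic the treatment of term $\mathrm{II}$ in the proof of Theorem~\ref{thm:strong_convergence}, inserting $\pm E(T-r)BF(\dfloor{r},\tilde X_1(\dfloor{r}))$, $\pm E(T-r)BF(\dfloor{r},X_1(\dfloor{r}))$ and $\pm E(T-r)BF(\dfloor{r},X_1(r))$: the semigroup-error contribution is controlled by Lemma~\ref{lem:semigroup_error} with exponent $1+\theta$ using $\norm{\Lambda^{\theta/2}F(\dfloor{r},\tilde X_1(\dfloor{r}))}{L^p(\Omega,\dot H^0)}\lesssim 1$ from Assumption~\ref{assumptions:1}\ref{assumptions:1:F} and~\eqref{eq:fully_discrete_spatial_regularity}; the two temporal-interpolation contributions are bounded by $|\dfloor{r}-r|^{\min(r,1)}$ and $|\dfloor{r}-r|^{\eta}$ via Theorem~\ref{thm:mild_holder} and Assumption~\ref{assumptions:1}\ref{assumptions:1:F}, contributing $\Delta t^{\min(\eta,1)}$; and the remaining coupling contribution, which after the mean value theorem and~\eqref{eq:P^1EB_Theta_commute} reduces (as in term $\mathrm{VII}$ of the strong proof) to $\norm{\Lambda^{-(1+\nu)/2}(F(\dfloor{r},\tilde X_1(\dfloor{r}))-F(\dfloor{r},X_1(\dfloor{r})))}{L^p(\Omega,\dot H^0)}$, is estimated through~\eqref{eq:ass2:munu} by $\norm{\tilde X_1(\dfloor{r})-X_1(\dfloor{r})}{L^p(\Omega,\dot H^{-\nu})}$ and hence by~\eqref{eq:fully_discrete_strong_converence_negative_norm}, whose exponent $\min(\max(2\nu,\beta),\max(2\nu,1+\theta),\delta)$ is at least $r'$. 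Note that, in contrast to the strong proof, no Gronwall iteration is needed here, since the negative-norm strong estimate is already available.

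The main work, and the source of the doubled rate, is $\mathrm{T}_3$. Writing $u_s=X_1(T)+s(\tilde X_1(T)-X_1(T))$, $\Gamma_{\mathrm M}=(1-s)D_rX_1(T)+sD_r\tilde X_1(T)$ and $\Gamma_2=P^1(\tilde E(T-r)-E(T-r))B$, the integrand equals $\trace(\Gamma_2 Q\Gamma_{\mathrm M}^*\phi''(u_s))$ by~\eqref{eq:HSvTrace}, \eqref{eq:trace_cyclic} and self-adjointness of $Q$ and $\phi''(u_s)$. The idea is to split the error factor: by the second summand in Lemma~\ref{lem:semigroup_error} with $\alpha=2\vartheta$, $\vartheta=\tfrac12\max(2\nu,\beta)$, one has $\Gamma_2=\Lambda^{\vartheta/2}M\Lambda^{(\vartheta-1)/2}$ with $\norm{M}{\cL(\dot H^0)}\lesssim h^{\min(2\vartheta\kappa/(\kappa+1),\kappa)}+\Delta t^{\min(2\vartheta\rho/(\rho+1),1)}$, and the surrounding powers $\Lambda^{\vartheta/2}$, $\Lambda^{(\vartheta-1)/2}$ are absorbed using, on one side, the Malliavin regularity $\norm{D_rX_1(T)\Lambda^{1/2}}{\cL(\dot H^0)}$, $\norm{D_r\tilde X_1(T)\Lambda^{1/2}}{\cL(\dot H^0)}$ from~\eqref{eq:malliavin_regularity_mild}--\eqref{eq:malliavin_regularity_discrete} together with $\phi''(u_s)\in\cL(\dot H^0)$ and~\eqref{eq:B_bound_2}, and, on the other side, the assumption $\norm{\Lambda^{\beta-1/2}Q\Lambda^{-1/2}}{\trace}<\infty$ of Assumption~\ref{assumptions:3}\ref{ass:3:origQ}; the budget closes because $\nu\le\min(r,1)\le\beta$, so $2\vartheta=\max(2\nu,\beta)\le 4\beta$ and the negative $\Lambda$-powers landing on $Q$ remain within the range covered by the trace-class assumption. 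Under Assumption~\ref{assumptions:3}\ref{ass:3:altQ} one uses instead $\trace(Q)<\infty$, the regularity $\norm{\Lambda^{1/2}D_rX_1(T)}{\cL(\dot H^0)}$, $\norm{\Lambda^{1/2}D_r\tilde X_1(T)}{\cL(\dot H^0)}$ from~\eqref{eq:malliavin_regularity_mild_2}--\eqref{eq:malliavin_regularity_discrete_2}, and the commutation $\Lambda^{-1/2}\phi''(u)=\phi''(u)\Lambda^{-1/2}$ to move the smoothing across $\phi''(u_s)$. After taking expectations one obtains $|\mathrm{T}_3|\lesssim h^{\min(\max(2\nu,\beta)\kappa/(\kappa+1),\kappa)}+\Delta t^{\min(\max(2\nu,\beta)\rho/(\rho+1),\eta,1)}$, the $\eta$ arising from the temporal-interpolation part of $\tilde E$ (the hypothesis $G\equiv I$ of Assumption~\ref{assumptions:3}\ref{ass:3:G} removing any discretisation error in $D\tilde X_1$ other than those from $x_0$ and $F$, which are already handled in $\mathrm{T}_1$ and $\mathrm{T}_2$). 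Collecting $\mathrm{T}_1,\mathrm{T}_2,\mathrm{T}_3$ yields the exponent $\min(\max(2\nu,\beta),1+\theta,\delta)=r'$ in $h$ and $\min(r'\rho/(\rho+1),\eta,1)$ in $\Delta t$, which is the claim for $\mu\le 1$.

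For $1<\mu\le 2$ the only change is that in the coupling part of $\mathrm{T}_2$ and in the $F'$-correction part of $D\tilde X_1$ entering $\mathrm{T}_3$, the derivative $F'(t,\cdot)$ maps only into $\dot H^{-\mu}$, strictly rougher than $\dot H^{-1}$; before pairing with the discrete quantities one must apply $P_h$ (equivalently $\Lambda_h^{(\mu-1)/2}$) to an element of $\dot H^{-\mu}$, and the inverse inequality~\eqref{eq:inv_ineq}--\eqref{eq:Lambda_h_inv_ineq} then costs a factor $h^{-(\mu-1)}=h^{1-\mu}$; propagating it through the $h$-part of the estimates for $\mathrm{T}_2$ and $\mathrm{T}_3$ (the purely $\Delta t$-driven terms being unaffected) gives the stated bound. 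I expect the fractional-power bookkeeping in $\mathrm{T}_3$ — arranging that the two ``halves'' of the error $\tilde E-E$ meet exactly the $\beta/2$ supplied by $Q$ and the $\nu$ supplied by the Malliavin regularity, in both cases of Assumption~\ref{assumptions:3} — to be the main obstacle, with the $h^{1-\mu}$ loss in the $\mu>1$ case a secondary but delicate point.
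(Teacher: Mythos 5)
Your overall architecture (Proposition~\ref{prop:error_rep}, then separate treatment of the initial-data, drift and Malliavin terms, with the negative-norm strong estimate~\eqref{eq:fully_discrete_strong_converence_negative_norm} feeding the coupling part of the drift term) matches the paper, and your $\mathrm{T}_1$ and most of $\mathrm{T}_2$ are fine. The genuine gap is in $\mathrm{T}_3$ under Assumption~\ref{assumptions:3}\ref{ass:3:origQ}. You factor $\Gamma_2=P^1(\tilde E-E)B=\Lambda^{\vartheta/2}M\Lambda^{(\vartheta-1)/2}$ via the \emph{second} summand of Lemma~\ref{lem:semigroup_error}, which splits the required smoothing symmetrically. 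But in the cyclic trace $\trace\bigl(\hat\phi_2\,\Gamma_{\mathrm M}\,Q\,\Gamma_2^*\bigr)$ the two neighbours of $\Gamma_2$ are $Q$ on one side and $\hat\phi_2(s)=\phi''(u_s)$ on the other. The factor $\Lambda^{(\vartheta-1)/2}$ adjacent to $Q$ can indeed be absorbed by $B\Lambda^{1/2}$, $\Lambda^{\beta-1/2}Q\Lambda^{-1/2}\in\cL_1$ and $D_rX_1(T)\Lambda^{1/2}$; but the positive power $\Lambda^{\vartheta/2}$ on the other side lands next to $\phi''(u_s)$, and under~\ref{ass:3:origQ} the only information about $\phi''$ is that it is (polynomially) bounded in $\cL(\dot H^0)$ — it cannot absorb any positive power of $\Lambda$, and there is no commutation hypothesis in this case (that is reserved for~\ref{ass:3:altQ}). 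So your budget does not close. The paper avoids this by using the \emph{first} summand of Lemma~\ref{lem:semigroup_error}, i.e.\ $\norm{P^1(\tilde E-E)\Theta^{-\beta}}{\cL(\cH,\dot H^0)}$ with $\alpha=2\beta$, which places the entire smoothing on the $Q$-side where it is exactly consumed by $\Theta^{\beta}B=B\Lambda^{1/2}\Lambda^{\beta-1/2}$, the trace-class assumption, and the $\Lambda^{1/2}$ from Lemma~\ref{lem:Malliavin_2}, leaving nothing on the $\phi''$-side; since $2\beta\ge\max(2\nu,\beta)$ this still yields (in fact exceeds) the rate $r'$. Your plan would work if you replaced the symmetric split by this one-sided one.

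Two smaller points. In $\mathrm{T}_2$ you claim the coupling term reduces ``as in term $\mathrm{VII}$ of the strong proof'' to $\norm{\Lambda^{-(1+\nu)/2}(F(\cdot,\tilde X_1)-F(\cdot,X_1))}{L^p(\Omega,\dot H^0)}$. That reduction is not available here: in the weak error the pairing with $\phi'$ is in $\dot H^0$, so $P^1\tilde E(T-r)B$ supplies only one power $\Lambda^{-1/2}$ of free smoothing (via~\eqref{eq:B_bound_2}), not $\Lambda^{-(1+\nu)/2}$; taken literally your reduction would give the doubled rate with no $h^{1-\mu}$ loss for any admissible $\mu\le 1+\nu$, contradicting the statement for $\mu>1$. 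The correct mechanism (which your $\mu>1$ paragraph essentially describes) is to write $P^1\tilde E(T-r)P_hB\Lambda^{\mu/2}\cdot\Lambda^{-\mu/2}(F-F)$ and pay $\norm{P_hB\Lambda^{\mu/2}}{\cL(\dot H^0,\cH)}\lesssim 1$ for $\mu\le1$ and $\lesssim h^{1-\mu}$ for $\mu\in(1,2]$ via~\eqref{eq:Theta_h_inv_ineq}. Finally, $\mathrm{T}_3$ needs no $h^{1-\mu}$ correction and produces no $\eta$: Lemma~\ref{lem:Malliavin_2} already bounds $D\tilde X_1(t_n)\Lambda^{1/2}$ uniformly in $h,\Delta t$, and the $\eta$ in the final exponent enters only through $\mathrm{T}_2$ and the strong estimate; these are harmless overestimates but indicate the bookkeeping is not yet under control.
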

	\begin{proof}
		We first prove the theorem under Assumption~\ref{assumptions:3}\ref{ass:3:origQ}.  Writing \begin{equation*}
		\hat{\phi}_1(s) = \phi'\left(X_1(T)+s(\tilde X_1 (T)-X_1(T))\right) 
		\end{equation*}
		and 
		\begin{equation*}
		\hat{\phi}_2(s) = \phi''\left(X_1(T)+s(\tilde X_1(T)-X_1(T))\right),
		\end{equation*}
		we use Proposition~\ref{prop:error_rep} to split the weak error
		\begin{align*}
		&\left|\E\left[\phi(\tilde X_1(T))-\phi(X_1(T))\right]\right| \\
		&\quad\le \int^1_0 \E\left[\left|\left\langle \hat{\phi}_1(s),
		P^1\left(\tilde{E}(T)-E(T)\right)x_0 \right\rangle_{\dot{H}^0} \right| \right] \dd s \\
		&\quad\quad+ \int^1_0 \E\Big[\Big|\Big\langle \hat{\phi}_1(s), \int^T_0 P^1\tilde{E}(T-r)BF(\dfloor{r}, \tilde{X}_1(\dfloor{r})) \\
		&\quad\hspace{23mm}- P^1 E(T-r)BF(r, X_1(r)) \dd r
		\Big\rangle_{\dot{H}^0} \Big| \Big] \dd s \\
		&\quad\quad+ \int^1_0 \int^T_0 \E\Big[\Big|\Big\langle \hat{\phi}_2(s)(1-s) D_r X_1(T),  P^1\left(\tilde{E}(T-r)-E(T-r)\right)B \Big\rangle_{\cL_2^0} \Big| \Big] \dd r \dd s \\
		&\quad\quad+ \int^1_0 \int^T_0 \E\Big[\Big|\Big\langle \hat{\phi}_2(s)s D_r \tilde X_1(T),  P^1\left(\tilde{E}(T-r)-E(T-r)\right)B \Big\rangle_{\cL_2^0} \Big| \Big] \dd r \dd s \\
		&\quad= \mathrm{I} + \mathrm{II} + \mathrm{III} + \mathrm{IV}.
		\end{align*}
		First we note that as a consequence of~\eqref{eq:spat_reg_X}, \eqref{eq:fully_discrete_spatial_regularity} and Assumption~\ref{assumptions:3}\ref{ass:3:phi},
		\begin{equation*}
		%\label{eq:phi_hat_1_bound}
		\sup_{\substack{s \in [0,1] \\ h, \Delta t \in (0,1]}}
		\norm{\hat{\phi}_1(s)}{L^2(\Omega,{\dot{H}^0})} < \infty.
		\end{equation*}
		Therefore, by H\"older's inequality, Lemma~\ref{lem:semigroup_error} and Assumption~\ref{assumptions:1}\ref{assumptions:1:x0},
		\begin{align*}
		\mathrm{I} &\le \sup_{\substack{s \in [0,1] \\ h, \Delta t \in (0,1]}} \norm{\hat{\phi}_1(s)}{L^2(\Omega,{\dot{H}^0})} \norm{P^1(\tilde{E}(T)-E(T))\Theta^{-\frac{\delta}{2}}}{\cL(\cH,\dot{H}^0)} \norm{\Theta^{\frac{\delta}{2}} x_0}{\cH} \\
		&\lesssim h^{\min(\delta \frac{\kappa}{\kappa+1},\kappa)} + {\Delta t}^{\min(\delta \frac{\rho}{\rho+1},1)}.
		\end{align*}
		By the same arguments, we obtain
		\begin{align*}
		\mathrm{II} &\lesssim
		\int^T_0 \norm{P^1\tilde{E}(T-r)BF(\dfloor{r}, \tilde{X}_1(\dfloor{r}))-P^1 E(T-r)BF(r, X_1(r)) }{L^2(\Omega,{\dot{H}^0})} \dd r \\
		&= \sum_{j=0}^{N_{\Delta t}-1} \int_{t_j}^{t_{j+1}} \norm{P^1\left(\tilde{E}(T-r)BF(t_{j}, \tilde{X}_1(t_{j})) - E(T-r)BF(r, X_1(r))\right)}{L^p(\Omega,\dot{H}^0)} \dd r,
		\end{align*}
		%	which is bounded by Lemma~\ref{lem:strong_conv_lemma}, yielding
		%	\begin{equation*}
		%		\mathrm{II} \lesssim h^{ \frac{\kappa}{\kappa+1}} + {\Delta t}^{\min( \frac{\rho}{\rho+1},r+\gamma,\eta)}  + \Delta t \sum_{j=0}^{N_{\Delta t}-1} \norm{\tilde X_1(t_j)-X_1(t_j)}{L^2(\Omega,\dot{H}^{-\gamma})},
		%	\end{equation*}
		%	where $r=\min(\beta,\delta,1)$, so that, noting that $r' \le r + \gamma$, by an appeal to Theorem~\ref{thm:strong_convergence},
		%	\begin{equation*}
		%		\mathrm{II} \lesssim h^{r' \frac{\kappa}{\kappa+1}} + {\Delta t}^{\min(r' \frac{\rho}{\rho+1},\eta)}.
		%	\end{equation*}
		and we split the integrand as follows:
		\begin{align*}
		&\norm{P^1\left(\tilde{E}(T-r)BF(t_{j}, \tilde{X}_1(t_{j})) - E(T-r)BF(r, X_1(r))\right)}{L^p(\Omega,\dot{H}^0)} \\
		&\quad \le 
		\norm{P^1\tilde{E}(T-r)B\left(F(t_{j}, \tilde{X}_1(t_j))-F(t_j, {X}_1(t_j)) \right)}{L^p(\Omega,\dot{H}^0)}\\
		&\quad\quad+ \norm{P^1\tilde{E}(T-r)B\left(F(t_j, {X}_1(t_j))-F(r, X_1(t_{j})) \right)}{L^p(\Omega,\dot{H}^0)}  \\
		&\quad\quad+ \norm{P^1\tilde{E}(T-r)B\left(F(r, X_1(t_{j}))-F(r, X_1(r)) \right)}{L^p(\Omega,\dot{H}^0)} 
		\\
		&\quad\quad+ 
		\norm{P^1\left(\tilde{E}(T-r)-E(T-r)\right)BF(r, X_1(r))}{L^p(\Omega,\dot{H}^0)} \\
		&\quad= \mathrm{V} + \mathrm{VI} + \mathrm{VII}+ \mathrm{VIII}.	
		\end{align*}
		Next, by~\eqref{eq:fully_discrete_stability}, the mean value theorem and~\eqref{eq:ass2:munu}, since $\nu \le \min(r,1)$, it follows that
		\begin{align*}
		\mathrm{V} &= \norm{P^1\tilde{E}(T-r)P_hB\left(F(t_{j}, \tilde{X}_1(t_j))-F(t_j, {X}_1(t_j)) \right)}{L^p(\Omega,\dot{H}^0)} \\
		&\le \norm{P^1\tilde E(T-r)}{\cL(\cH,\dot{H}^0)} \norm{P_h B \Lambda^{\frac{\mu}{2}}}{\cL(\dot{H}^0,\cH)} \\
		&\quad\quad\quad\times
		\norm{\Lambda^{-\frac{\mu}{2}}\left(F(t_{j}, \tilde{X}_1(t_{j}))-F(t_{j}, {X}_1(t_{j})) \right)}{L^p(\Omega,\dot{H}^0)} \\
		%&\lesssim \norm{P_h B \Lambda^{\frac{\mu-1}{2}}}{\cL(\dot{H}^0,\cH)} \norm{\Lambda^{-\frac{\mu}{2}}\left(F(t_{j}, \tilde{X}_1(t_{j}))-F(t_{j}, {X}_1(t_{j})) \right)}{L^p(\Omega,\dot{H}^0)} \\
		&\lesssim \norm{P_h B \Lambda^{\frac{\mu}{2}}}{\cL(\dot{H}^0,\cH)} \\
		&\quad\quad\quad\times
		\int^1_0 \norm{\Lambda^{-\frac{\mu}{2}} F'\left(t_j,\tau\tilde{X}_1(t_{j})+(1-\tau){X}_1(t_{j})\right)\left(\tilde{X}_1(t_{j})-{X}_1(t_{j})\right)}{L^p(\Omega,\dot{H}^0)} \dd \tau \\
		&\lesssim \norm{P_h B \Lambda^{\frac{\mu}{2}}}{\cL(\dot{H}^0,\cH)} 
		%\left(1 + \norm{\Lambda^{\frac{r}{2}}\tilde{X}_1(t_{j})}{L^p(\Omega,\dot{H}^0)}\right) 
		\norm{ \Lambda^{-\frac{\nu}{2}} \left( \tilde{X}_1(t_{j})-{X}_1(t_{j})\right)}{L^p(\Omega,\dot{H}^0)},
		\end{align*}
		where we have also used Theorems~\ref{thm:X_spat_reg} and~\ref{thm:strong_convergence}. 
		If $\mu \le 1$, then, by~\eqref{eq:B_bound_2}, 
		\begin{equation*}
		\norm{P_h B \Lambda^{\frac{\mu}{2}}}{\cL(\dot{H}^0,\cH)} \le \norm{B \Lambda^\frac{1}{2}}{\cL(\dot{H}^0,\cH)} \norm{\Lambda^{\frac{\mu-1}{2}}}{\cL(\dot{H}^0)} \lesssim 1,
		\end{equation*}
		while if $1<\mu \le 2$, by~\eqref{eq:Theta_h_inv_ineq} and~\eqref{eq:B_bound_2}, it follows that
		\begin{align*}
		\norm{P_h B \Lambda^{\frac{\mu}{2}}}{\cL(\dot{H}^0,\cH)} &= \norm{P_h \Theta^{\frac{\mu-1}{2}} B \Lambda^\frac{1}{2}}{\cL(\dot{H}^0,\cH)} \le \norm{P_h \Theta^{\frac{\mu-1}{2}}}{\cL(\cH)} \norm{B \Lambda^\frac{1}{2}}{\cL(\dot{H}^0,\cH)} \lesssim h^{1-\mu} .
		\end{align*} Next, we similarly derive by~\eqref{eq:fully_discrete_stability} and~\eqref{eq:B_bound_2}, that
		\begin{align*}
		\mathrm{VI} &\lesssim \norm{\Lambda^{-\frac{1 }{2}}\left(F(t_{j}, {X}_1(t_j))-F(r, {X}_1(t_j)) \right)}{L^p(\Omega,\dot{H}^0)} \lesssim \left(1 + \norm{X_1(t_j)}{L^p(\Omega,\dot{H}^0)}\right) |t_j-r|^\eta \\ &\lesssim |t_j-r|^\eta,
		\end{align*}
		where we applied Theorem~\ref{thm:X_spat_reg} and Assumption~\ref{assumptions:1}\ref{assumptions:1:F} in the last inequality. Term $\mathrm{VII}$ is treated like $\mathrm{V}$, and thus, if $\mu \le 1$,
		\begin{align*}
		\mathrm{VII} &\lesssim 
		\left(1 + \norm{\Lambda^{\frac{\nu}{2}}X_1(t_{j})}{L^p(\Omega,\dot{H}^0)} + \norm{\Lambda^{\frac{\nu}{2}}X_1(r)}{L^p(\Omega,\dot{H}^0)}\right) \norm{ \Lambda^{-\frac{\nu}{2}} \left( X_1(t_{j})-{X}_1(r)\right)}{L^p(\Omega,\dot{H}^0)} \\
		& \lesssim  |t_j - r|^{\min(2\nu,1)},
		\end{align*}
		using Theorem~\ref{thm:X_spat_reg} with $\nu \le r$ and Theorem~\ref{thm:mild_holder} in the last step. On the other hand, if $1<\mu \le 2$,
		\begin{equation*}
		\mathrm{VII} \lesssim h^{1-\mu} |t_j - r|^{\min(2\nu,1)}.
		\end{equation*}
		Term $\mathrm{VIII}$ is handled by~\eqref{eq:B_bound_2}, Assumption~\ref{assumptions:1}\ref{assumptions:1:F}, Lemma~\ref{lem:semigroup_error} and Theorem~\ref{thm:X_spat_reg} yielding the estimate, since $\theta \le r$,
		\begin{align*}
		\mathrm{VIII} &\le
		\norm{P^1\left(\tilde{E}(T-r)-E(T-r)\right)B \Lambda^{-\frac{\theta}{2}}}{\cL(\dot{H}^0)} \norm{ \Lambda^{\frac{\theta}{2}} F(t_{j}, {X}_1(r))}{L^p(\Omega,\dot{H}^0)} \\
		&\le \norm{P^1\left(\tilde{E}(T-r)-E(T-r)\right)\Theta^{-\frac{1+\theta}{2}}}{\cL(\cH,\dot{H}^0)} \left(1 + \norm{ \Lambda^{\frac{\theta}{2}}  {X}_1(r)}{L^p(\Omega,\dot{H}^0)} \right) \\
		&\lesssim h^{(1+\theta)\frac{\kappa}{\kappa+1}} + {\Delta t}^{\min((1+\theta)\frac{\rho}{\rho+1},1)}.
		\end{align*}
		In summary, if $\mu \le 1$, we get for $\mathrm{II}$, using also Theorem~\ref{thm:strong_convergence}, 
		\begin{align*}
		\mathrm{II} &\lesssim h^{(1+\theta)\frac{\kappa}{\kappa+1}} + {\Delta t}^{\min( (1+\theta)\frac{\rho}{\rho+1},2\nu,\eta,1)}  +  \Delta t \sum_{j=0}^{N_{\Delta t}-1} \norm{\tilde X_1(t_j)-X_1(t_j)}{L^2(\Omega,\dot{H}^{-\nu})} \\
		&\lesssim h^{r'\frac{\kappa}{\kappa+1}} + {\Delta t}^{\min( r'\frac{\rho}{\rho+1},\eta,1)}.
		\end{align*}
		For $1 < \mu \le 2$, we instead obtain
		\begin{align*}
		\mathrm{II} &\lesssim h^{r'\frac{\kappa}{\kappa+1}+1-\mu} + h^{1-\mu} {\Delta t}^{\min( r'\frac{\rho}{\rho+1},\eta,1)}.
		\end{align*}
		We now continue with term $\mathrm{III}$, which by~\eqref{eq:HSvTrace},~\eqref{eq:trace_cyclic},~\eqref{eq:schatten_bound_1} and~\eqref{eq:schatten_bound_2} satisfies
		\begin{align*}
		\mathrm{III} &= \int^1_0 \int^T_0 \E \Big[  \Big| \trace\Big((1-s) P^1\left(\tilde{E}(T-r)-E(T-r)\right)B Q \left(\hat{\phi}_2(s) D_r X_1(T)\right)^* \Big) \Big| \Big] \dd r \dd s \\
		&= \int^1_0 \int^T_0 (1-s) \E \Big[  \Big| \trace\Big(\hat{\phi}_2(s)^* P^1\left(\tilde{E}(T-r)-E(T-r)\right) B Q \left(D_r X_1(T)\right)^* \Big) \Big| \Big] \dd r \dd s \\
		&\le \int^1_0 \int^T_0
		\E \Big[  \lrnorm{\hat{\phi}_2(s)^* P^1\left(\tilde{E}(T-r)-E(T-r)\right) B Q \left(D_r X_1(T)\right)^*}{\trace} \Big] \dd r \dd s \\
		&= \int^1_0 \int^T_0
		\E \Big[  \Bignorm{\hat{\phi}_2(s)^* P^1\left(\tilde{E}(T-r)-E(T-r)\right)\Theta^{-\beta}  B \Lambda^{\frac{1}{2}} \Lambda^{\beta-\frac{1}{2}} Q \Lambda^{-\frac{1}{2}} \\
			&\hspace{25mm}\times\left(D_r X_1(T)\Lambda^{\frac{1}{2}}\right)^*}{\trace} \Big] \dd r \dd s.
		\end{align*}
		Tonelli's theorem, H\"older's inequality and Jensen's inequality imply
		\begin{align*} 
		\mathrm{III} &\le \int^1_0 \int^T_0 \E \Big[ \norm{\hat{\phi}_2(s)^*}{\cL(\dot{H}^0)} \norm{D_r X_1(T)\Lambda^{\frac{1}{2}}}{\cL(\dot{H}^0)} \Big] \\
		&\hspace{25mm}\times\lrnorm{P^1\left(\tilde{E}(T-r)-E(T-r)\right)\Theta^{-\beta}  B \Lambda^{\frac{1}{2}} \Lambda^{\beta-\frac{1}{2}} Q \Lambda^{-\frac{1}{2}}}{\trace} \dd r \dd s\\
		&\le \sup_{\tau \in [0,T]} \lrnorm{P^1\left(\tilde{E}(T-\tau)-E(T-\tau)\right)\Theta^{-\beta}  B \Lambda^{\frac{1}{2}} \Lambda^{\beta-\frac{1}{2}} Q \Lambda^{-\frac{1}{2}}}{\trace} \\
		&\hspace{25mm}\times \E\left[ \int^1_0 \norm{\hat{\phi}_2(s)^*}{\cL(\dot{H}^0)} \dd s \int^T_0 \norm{D_r X_1(T)\Lambda^{\frac{1}{2}}}{\cL(\dot{H}^0)} \dd r \right]\\
		&\lesssim \sup_{\tau \in [0,T]} \lrnorm{P^1\left(\tilde{E}(T-\tau)-E(T-\tau)\right)\Theta^{-\beta}  B \Lambda^{\frac{1}{2}} \Lambda^{\beta-\frac{1}{2}} Q \Lambda^{-\frac{1}{2}}}{\trace} \\
		&\hspace{25mm}\times \sup_{h, \Delta t \in (0,1]}  \norm{\hat{\phi}_2}{L^2([0,1] \times \Omega,\cL(\dot{H}^0)} \norm{D X_1(T)\Lambda^{\frac{1}{2}}}{L^2([0,T]\times\Omega,\cL(\dot{H}^0))}  \\
		& \lesssim \sup_{\tau \in [0,T]} \lrnorm{P^1\left(\tilde{E}(T-\tau)-E(T-\tau)\right)\Theta^{-\beta}  B \Lambda^{\frac{1}{2}} \Lambda^{\beta-\frac{1}{2}} Q \Lambda^{-\frac{1}{2}}}{\trace},
		\end{align*}
		where the final inequality follows from Lemma~\ref{lem:Malliavin_2} and  
		the fact that by~\eqref{eq:spat_reg_X}, \eqref{eq:fully_discrete_spatial_regularity} and Assumption~\ref{assumptions:3}\ref{ass:3:phi},
		\begin{equation*}
		\sup_{h, \Delta t \in (0,1]} \norm{\hat{\phi}_2}{L^2([0,1] \times \Omega,{\cL(\dot{H}^0)})} < \infty.
		\end{equation*}
		For any $s \in [0,T]$, by~\eqref{eq:schatten_bound_1} and~\eqref{eq:B_bound_2},
		\begin{align*}
		&\lrnorm{P^1\left(\tilde{E}(T-\tau)-E(T-\tau)\right)\Theta^{-\beta}  B \Lambda^{\frac{1}{2}} \Lambda^{\beta-\frac{1}{2}} Q \Lambda^{-\frac{1}{2}}}{\trace} \\
		&\quad\le \norm{P^1\left(\tilde{E}(T-\tau)-E(T-\tau)\right)\Theta^{-\beta}  }{\cL(\cH,\dot{H}^0)} \lrnorm{\Lambda^{\beta-\frac{1}{2}} Q \Lambda^{-\frac{1}{2}}}{\trace},
		\end{align*}
		and thus, by Lemma~\ref{lem:semigroup_error} and Assumption~\ref{assumptions:3}\ref{ass:3:origQ},
		\begin{equation*}
		\mathrm{III} \lesssim h^{\min(2 \beta \frac{\kappa}{\kappa+1},\kappa)} + {\Delta t}^{\min(2 \beta \frac{\rho}{\rho+1},1)}.
		\end{equation*} 
		In the exact same way, one deduces that
		\begin{equation*}
		\mathrm{IV} \lesssim h^{\min(2 \beta \frac{\kappa}{\kappa+1},\kappa)} + {\Delta t}^{\min(2 \beta \frac{\rho}{\rho+1},1)},
		\end{equation*} 
		and since $2\beta \ge \max(2\nu,\beta)$ this finishes the proof in the case when Assumption~\ref{assumptions:3}\ref{ass:3:origQ} is used. 
		
		If Assumption~\ref{assumptions:3}\ref{ass:3:altQ} holds in place of Assumption~\ref{assumptions:3}\ref{ass:3:origQ}, terms $\mathrm{I}-\mathrm{II}$ are analyzed in the same way. For $\mathrm{III}$, we proceed similarly as before, except for that we use the commutativity condition on $\phi''$ along with~\eqref{eq:trace_cyclic} and Lemma~\ref{lem:Malliavin_2} to deduce that
		\begin{align*}
		\mathrm{III} 
		&\le \int^1_0 \int^T_0
		\E \Big[  \lrnorm{\left(\Lambda^{\frac{1}{2}} D_r X_1(T)\right)^* \hat{\phi}_2(s)^* \Lambda^{-\frac{1}{2}} P^1\left(\tilde{E}(T-r)-E(T-r)\right) B Q}{\trace} \Big] \dd r \dd s \\
		&\lesssim \sup_{r \in [0,T]} \lrnorm{\Lambda^{-\frac{1}{2}} P^1\left(\tilde{E}(T-r)-E(T-r)\right) B Q}{\trace} \\
		&\le \trace(Q) \sup_{r \in [0,T]} \lrnorm{\Lambda^{-\frac{1}{2}} P^1\left(\tilde{E}(T-r)-E(T-r)\right) B}{\cL(\dot{H}^0)}.
		\end{align*}
		Therefore, by Lemma~\ref{lem:semigroup_error},
		\begin{equation*}
		\mathrm{III} \lesssim h^{\min(2 \frac{\kappa}{\kappa+1})} + {\Delta t}.
		\end{equation*} 
		Finally, term $\mathrm{IV}$ is treated the same way as above, which finishes the proof.
	\end{proof}
	
	%Note that we recover the classical rule of thumb that the weak convergence rate is twice as big as that of the strong convergence in the following sense. If $\epsilon=0$ and $1 \ge \delta \ge \eta > 2\gamma = 2 \beta$ (this is comparable to the conditions of~\cite[Theorem 5.1]{W15}), then, by the theorem above,
	%\begin{equation*}
	%\left|\E\left[\phi(X(T))-\phi(\tilde X(T))\right]\right| \lesssim h^{2 \beta \frac{\kappa}{\kappa+1}} + \Delta t^{2 \beta \frac{\rho}{\rho+1}}
	%\end{equation*}
	%while Theorem~\ref{thm:strong_convergence} only guarantees that
	%\begin{equation*}
	%	\norm{{X_1(T)-\tilde X_1}(T)}{L^2(\Omega,\dot{H}^{0})}
	%	\lesssim h^{ \beta \frac{\kappa}{\kappa+1}} + \Delta t^{ \beta \frac{\rho}{\rho+1}}.
	%\end{equation*}

	\section{Examples and numerical simulation}
	\label{sec:examples_simulation}
	
	In this section we outline a few examples for which our theory yields weak convergence rates that are greater than the available strong convergence rates. Continuing in the setting of the previous section, where $\dot{H}^0 = L^2(\cD)$ for a domain $\cD \in \R^d$, $d= 1,2,3$, we recall that $G(t) = I$ for all $t \in [0,T]$. 
	
	Below, we also only consider time-independent $F$, so that $\eta$ can be chosen arbitrarily large. We take $F$ to be a Nemytskij operator, which for $u \in \dot{H}^0$ are given by $F(u)(x) = f(u(x))$ for a.e.\ $x \in \cD$. Here $f\colon\R \to \R$ is a  differentiable function such that, for a constant $C>0$, $|f(x)| \le C (1+|x|)$, $|f'(x)| \le C$ and $|f'(x) - f'(y)| \le C |x-y|$ for all $x,y \in \R$. In Appendix~\ref{sec:appendix}, we show that with these conditions on $f$, Assumption~\ref{assumptions:1}\ref{assumptions:1:F} is fulfilled for all $\theta \in [0,1/2)$. If it also holds that $f(0) = 0$, then the assumption is also fulfilled for $\theta \in (1/2,1]$. Moreover, we show that the derivative of $F$, given by $(F'(u)v) (x) = f'(u(x))v(x)$ for $v\in\dot{H}^0$ and a.e.\ $x \in \cD$,  fulfills Assumption~\ref{assumptions:2} for all $\nu \in [0,1/2) \cup (1/2,1)$ and $\mu$ such that $\mu \ge \max(\nu,d/2+\epsilon)$ for an arbitrary small number $\epsilon > 0$. 
	
	\subsection{The white noise case}
	\label{sec:white_noise_1d}
	
	Suppose that $Q=I$, so that we are considering space-time white noise. For Assumption~\ref{assumptions:1}\ref{assumptions:1:G} to be fulfilled we then must have $d=1$, and Assumption~\ref{assumptions:3} is fulfilled for all $\beta < 1/2$, see~\cite[Remark 4.6]{KLL13}. Suppose that $\delta = 1$ so that $r = \beta$, and choose $\nu = r$ maximal. For $0<\epsilon \ll 1/2$ we set $\mu = 1/d + \epsilon = 1/2 + \epsilon > \nu$. Theorem~\ref{thm:weak_convergence} therefore yields the weak convergence result 
	\begin{equation*}
	\left|\E\left[\phi(X(T))-\phi(\tilde X(T))\right]\right| \lesssim  h^{2 \beta \frac{\kappa}{\kappa+1}} + {\Delta t}^{2 \beta \frac{\rho}{\rho+1}}.
	\end{equation*} In contrast, Theorem~\ref{thm:strong_convergence} ensures that
	\begin{equation*}
	\norm{\tilde{X_1}(T)-X_1(T)}{L^2(\Omega,\dot{H}^{0})} \lesssim h^{\beta \frac{\kappa}{\kappa+1}} + {\Delta t}^{\beta \frac{\rho}{\rho+1}}.
	\end{equation*}
	We note that since $2 \beta < 1+\theta$, the value of $\theta$ has no influence on the convergence rate in this case.
	
	Below we illustrate this case with $\cD=(0,1), T=1$. We choose $u_0(x) = x\chi_{[(0,1/2)}(x) + (1-x)\chi_{[1/2,1)}(x)$ and $v_0(x) = \chi_{[(0,1/2)}(x)$, $x \in \cD$. With these choices $X_0 = [u_0,v_0]^\top \in \cH^1$. Moreover, we set $f = \cos(\cdot)$ and use piecewise linear finite elements (i.e., $\kappa=2$) and the Crank--Nicolson method (i.e., $\rho = 2$) in our approximation. See Figure~\ref{fig:white_plot} for a sample of $\tilde X$ with these parameters. 
	
	\begin{figure}[ht]
		\centering
		\includegraphics[width = 1.0\textwidth]{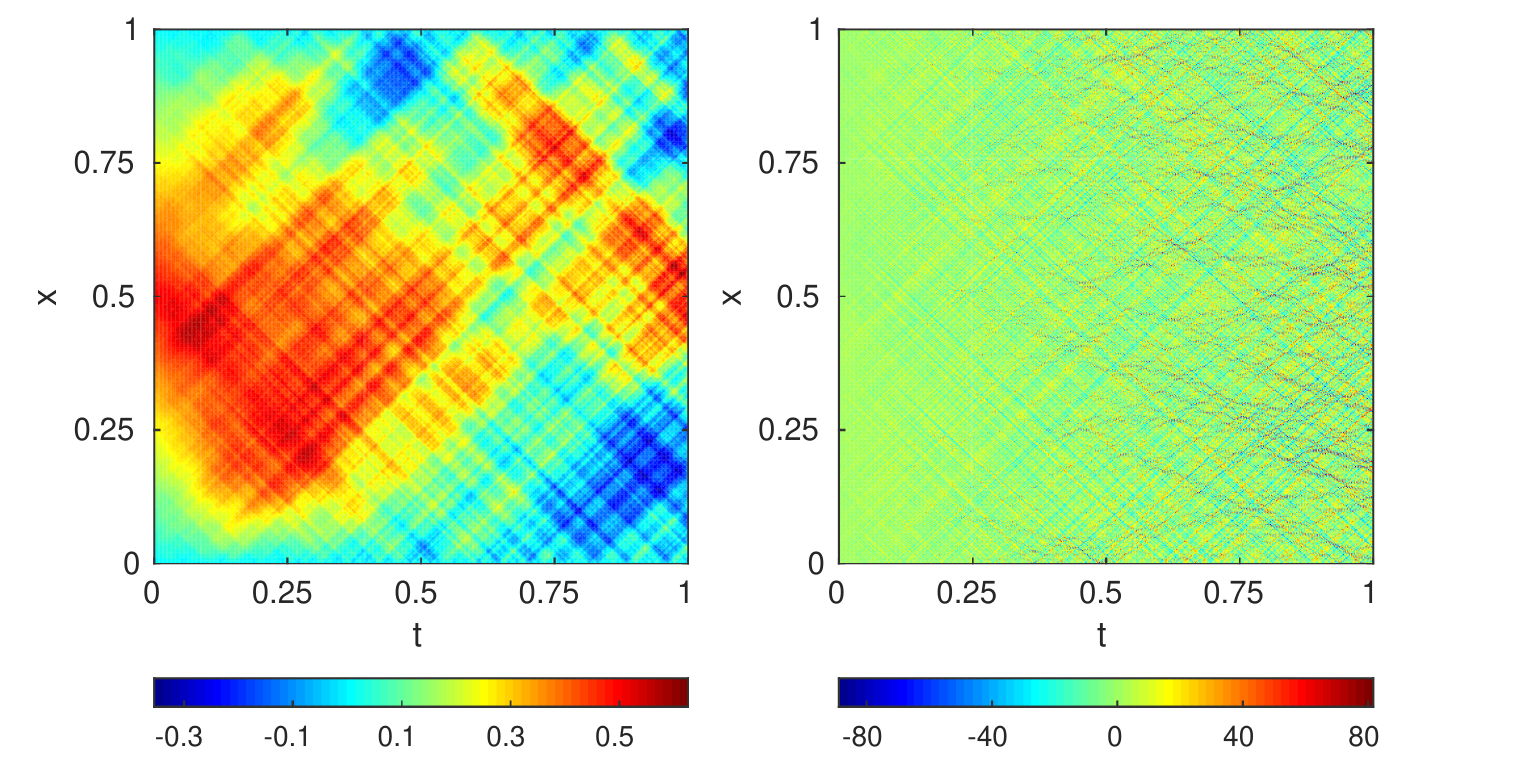}
		\caption{A sample of $\tilde u$ (left) and $\tilde v$ (right) for $\tilde X = [\tilde u, \tilde v]^\top$ computed with the parameters of Section~\ref{sec:white_noise_1d} and $h = \Delta t = 2^{-9}$.}
		\label{fig:white_plot}
	\end{figure}
	
	Choosing $\phi = \|\cdot\|^2$ we approximate our weak error by the Monte Carlo estimate
	\begin{equation*}
	\left|E_N\left[\norm{P^1 X^{N_{\Delta t}}_{h,\Delta t}}{\dot{H}^0}^2-\norm{P^1 X^{N_{\Delta t'}}_{h',\Delta t'}}{\dot{H}^0}^2\right]\right| = 
	\left| \frac{1}{N} \sum^N_{i=1} \left(\norm{P^1 X^{N_{\Delta t}}_{h,\Delta t}}{\dot{H}^0}^2-\norm{P^1 X^{N_{\Delta t'}}_{h',\Delta t'}}{\dot{H}^0}^2\right)^{(i)} \right|,
	\end{equation*}
	where $N$ is the number of iid samples $\big(\norm{P^1 X^{N_{\Delta t}}_{h,\Delta t}}{\dot{H}^0}^2-\norm{P^1 X^{N_{\Delta t'}}_{h',\Delta t'}}{\dot{H}^0}^2\big)^{(i)}$ of $\norm{P^1 X^{N_{\Delta t}}_{h,\Delta t}}{\dot{H}^0}^2-\norm{P^1 X^{N_{\Delta t'}}_{h',\Delta t'}}{\dot{H}^0}^2$. The strong error is approximated by
	\begin{equation*}
	\left(E_N\left[\norm{P^1 X^{N_{\Delta t}}_{h,\Delta t}-P^1 X^{N_{\Delta t'}}_{h',\Delta t'}}{\dot{H}^0}^2\right]\right)^{\frac{1}{2}}.
	\end{equation*}
	A reference solution $X^{N_{\Delta t'}}_{h',\Delta t'}$, $h', \Delta t' \in (0,1]$, replaces the analytical solution since this is not available. We set $\Delta t = h$ and compute  errors for $h = 2^{-1}, 2^{-2}, \ldots, 2^{-6}$. We use a reference solution with $\Delta t' = h' = 2^{-8}$ and use $N = 2000$ samples in our Monte Carlo simulation. As one can see from  Figure~\ref{fig:white_errors}, the behaviour of the strong errors are consistent with our theoretical results while the weak errors appear to decay faster than expected. This is in line with~\cite{W15} where numerical convergence rates of 1 were reported for a Crank--Nicolson discretization of the stochastic wave equation driven by white noise. 
	
	\begin{figure}[ht]
		\centering
		\includegraphics[width = .75\textwidth]{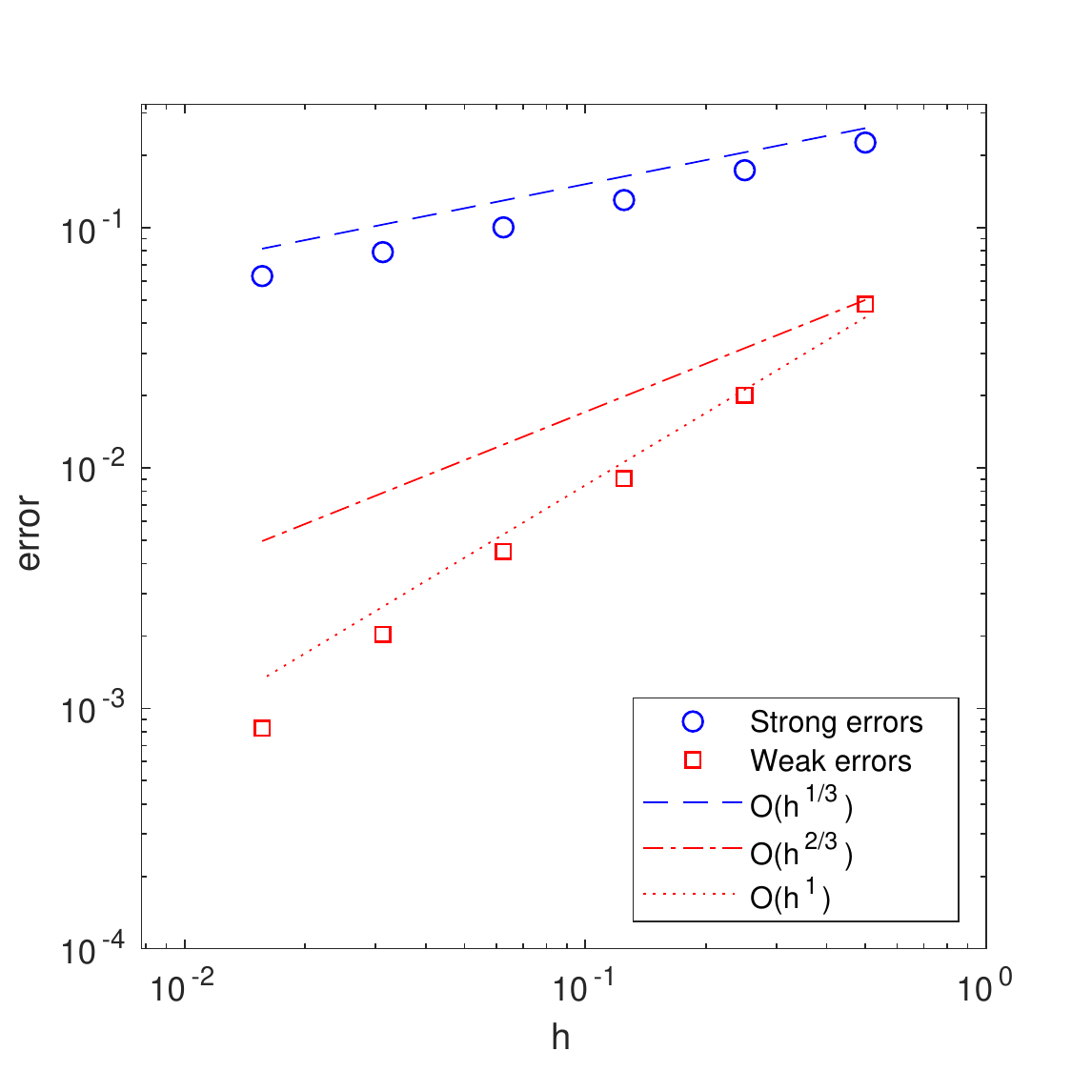}
		\caption{Monte Carlo estimates of strong and weak errors in the setting of Section~\ref{sec:white_noise_1d}: The white noise case.}
		\label{fig:white_errors}
	\end{figure}
	
	\subsection{The trace-class noise case}
	\label{sec:trace_noise_1d}
	
	If we assume that $Q$ is of trace-class, Assumption~\ref{assumptions:1}\ref{assumptions:1:G} holds for all $\beta \le 1$ but in general not for $\beta>1$. If $\Lambda Q = Q \Lambda$, or if $\Lambda Q \neq Q \Lambda$ with $\phi$ as in Section~\ref{sec:white_noise_1d}, then Assumption~\ref{assumptions:3}\ref{ass:3:origQ} or~\ref{ass:3:altQ} is fulfilled for $\beta \le 1$, respectively. Let us take $\beta = 1$ and suppose that $\min(1+\theta,\delta) \ge 2$ (letting $f(0) = 0$ so that $\theta = 1$), which ensures that $r = \beta = 1$. For arbitrary $0<\epsilon\ll1/2$, we choose $\mu = \max(d/2+\epsilon,1)$. In $d = 1$ we choose $\nu = \beta - \epsilon/2 = 1 - \epsilon/2 < \mu$ so that $r' = 2 \nu = 2 - \epsilon$ in Theorem~\ref{thm:weak_convergence}.  Our weak convergence result in that theorem then states that
	\begin{equation*}
	\left|\E\left[\phi(X_1(T))-\phi(\tilde X_1(T))\right]\right| \lesssim  h^{(2 -\epsilon) \frac{\kappa}{\kappa+1}} + {\Delta t}^{\min((2  - \epsilon)\frac{\rho}{\rho+1},1)},
	\end{equation*}
	while Theorem~\ref{thm:strong_convergence} yields the (for sufficiently small $\epsilon$) slower strong convergence rate
	\begin{equation*}
	\norm{\tilde{X_1}(T)-X_1(T)}{L^2(\Omega,\dot{H}^{0})} \lesssim h^{ \frac{\kappa}{\kappa+1}} + {\Delta t}^{ \frac{\rho}{\rho+1}}.
	\end{equation*}
	In $d = 2$, we choose $\beta$  and $\nu$ as before and $\mu = 1 + \epsilon > \nu$. Our strong convergence result remains the same as in $d=1$ while the weak rate becomes 
	\begin{equation*}
	\left|\E\left[\phi(X_1(T))-\phi(\tilde X_1(T))\right]\right| \lesssim h^{(2-\epsilon) \frac{\kappa}{\kappa+1}-\epsilon} + h^{-\epsilon} {\Delta t}.
	\end{equation*}
	Note that in both $d=1$ and $d=2$, the Crank--Nicolson scheme provides no essential benefit, in terms of the weak convergence rate, over the backward Euler scheme in this setting. In either case we have a weak rate that is essentially twice as big as the strong rate. In the case $d = 3$, however, we need to have $\mu > 3/2$, which means that we get a factor of $h^{-\frac{1}{2}}$ in Theorem~\ref{thm:weak_convergence}. Therefore, while Theorem~\ref{thm:weak_convergence} still yields greater spatial convergence rates compared to Theorem~\ref{thm:strong_convergence} for appropriate parameter configurations, the temporal convergence rate will be significantly lower.
	
	In $d=1$, we now compute weak and strong errors numerically in the setting outlined above with the same choices of $\cD$, $T$, $\rho$ and $\kappa$ as in Section~\ref{sec:white_noise_1d}. Let $Q$ be the integral operator defined by 
	\begin{equation*}
	\inpro[\dot{H}^0]{Qu}{v} = \int_{\cD \times \cD} q(x,y) u(x) v(y) \dd x \dd y
	\end{equation*} 
	for all $u,v \in \dot{H}^0$. We choose, for $x,y \in \cD$, the exponential covariance kernel $q(x,y) = q(x-y) = \exp(-25|x-y|)/16$, $f(x) = \sin(x)$, and $u_0 = v_0 = 0$. See Figure~\ref{fig:trace_plot} for a sample of $\tilde X$ with these parameters. 
	
	\begin{figure}[ht]
		\centering
		\includegraphics[width = 1.0\textwidth]{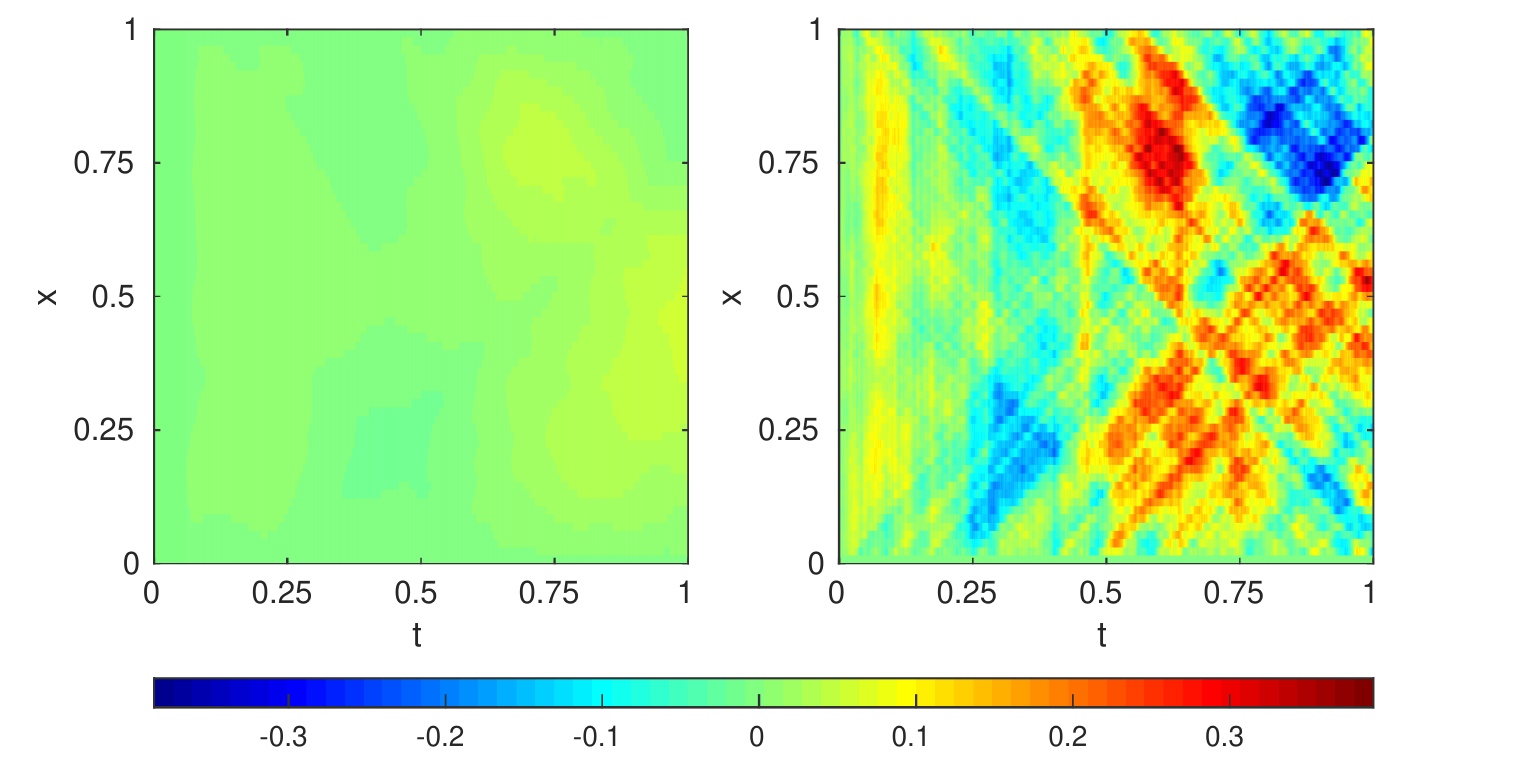}
		\caption{A sample of $\tilde u$ (left) and $\tilde v$ (right) for $\tilde X = [\tilde u, \tilde v]'$ computed with the parameters of Section~\ref{sec:trace_noise_1d} and $h^2 = \Delta t = 2^{-12}$.}
		\label{fig:trace_plot}
	\end{figure}
	
	The temporal step size is set to $\Delta t = h^2$. With this choice, we expect to see a weak and strong convergence rate of approximate order $\Op(h^{4/3})$ and $\Op(h^{2/3})$, respectively. We compute  errors for $h = 2^{-1}, 2^{-2}, \ldots, 2^{-5}$ and use a reference solution with $\Delta t' = {h'}^2 = 2^{-12}$, employing $N = 500$ samples in our Monte Carlo simulations. As one can see from  Figure~\ref{fig:trace_errors}, the decay of the errors is consistent with our theoretical results.
	
	\begin{figure}[ht]
		\centering
		\includegraphics[width = .75\textwidth]{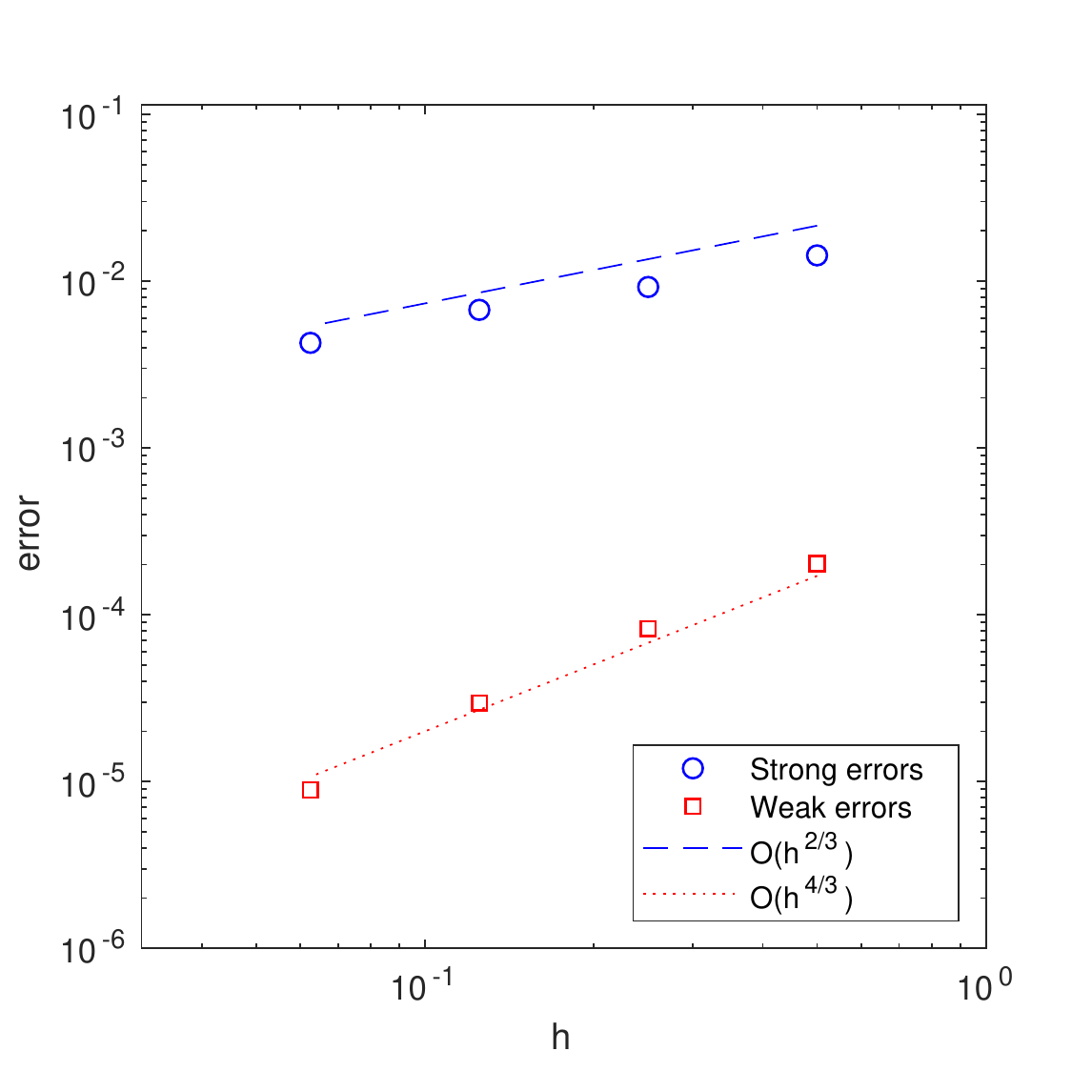}
		\caption{Monte Carlo estimates of strong and weak errors in the setting of Section~\ref{sec:trace_noise_1d}: The trace-class noise case.}
		\label{fig:trace_errors}
	\end{figure}

	\appendix
	\section{Nemytskij operators}
\label{sec:appendix}

Let  $\dot{H}^0 = L^2(\cD)$ for a convex bounded
% convexity is needed for \cite[Theorem~4.5]{Y08}
 domain $\cD$ in $\R^d$, $d=1,2,3$.
For $m \in \N_0$, we denote by $H^m = H^m(\cD) = W^{m,2}(\cD)$ the classical Sobolev space of order $m$. For $s = m + \sigma$, $m \in \N_0$, $\sigma \in (0,1)$, we employ the same notation for the fractional Sobolev space $H^s$ (see \cite{NPV12}) equipped with the Sobolev--Slobodeckij norm
\begin{equation*}
\norm{u}{H^s} = \left(\norm{u}{H^m}^2 + \sum_{|\alpha| = m} \int_{\cD \times \cD} \frac{|D^\alpha u(x) - D^\alpha u(y)|^2}{|x-y|^{d+2\sigma}}\dd x \dd y\right)^{1/2},
\end{equation*}
where $u \in H^s$. The spaces $(\dot{H}^{s})_{s \in [0,2]}$ are related to $(H^{s})_{s \in [0,2]}$ by (see, e.g., \cite[Theorem~4.5]{Y08})
\begin{equation}
\label{eq:sobolev_dot_equivalence}
	\dot{H}^s = 
	\begin{cases}
		H^s \text{ if } s \in [0,1/2), \\
		\left\{u \in H^s : u = 0 \text{ a.e. on } \partial \cD \right\} \text{ if } s \in (1/2,3/2) \cup (3/2,2],
	\end{cases}	
\end{equation}
with norm equivalence.

The aim of this appendix is to show some results on \emph{Nemytskij operators}, i.e., operators $F$ that are for $u \in \dot{H}^0$ given by $F(u)(x) = f(u(x))$ for almost every $x \in \cD$, where $f:\R \to \R$ is a measurable function. We assume $f$ to be Lipschitz continuous, i.e., that there exists a constant $C > 0$ such that
\begin{equation}
\label{eq:appendix_f_bound_lip}
|f(x)-f(y)| \le C |x-y|,
\end{equation}
for all $x,y \in \R$, implying the existence of a constant $C > 0$ such that
\begin{equation}
\label{eq:appendix_f_bound_lin_growth}
|f(x)| \le C (1+|x|),
\end{equation}
for all $x \in \R$.

If $f$ is also a once continuously differentiable function with a bounded first derivate, i.e., if $f': \R \to \R$ is continuous and there exists a constant $C>0$ such that $|f'(x)| \le C$ for all $x\in \R$, then $F \in \cG^1_\mathrm{b}(\dot{H}^0)$ (see e.g., \cite[Theorem~2.7, Chapter~1]{AP95}) and the derivative of $F$ at $u$ is given by 
\begin{equation*}
%\label{eq:appendix_first_derivative}
	(F'(u)v)(x) = f'(u(x))v(x)
\end{equation*}
or all $v \in H$ and almost every $x \in \cD$. 

We first show that $F$ is Lipschitz continuous on $\dot{H}^0$ and that it fulfills a linear growth condition.

\begin{proposition}
	\label{prop:appendix_1}
	Let $f: \R \to \R$ be a Lipschitz continuous function and let $F$ be the corresponding Nemytskij operator. Then, there exists a constant $C>0$ such that
	\begin{equation}
	\label{eq:appendix_lipschitz}
		\norm{F(u)-F(v)}{\dot{H}^0} \le C \norm{u-v}{\dot{H}^0}
	\end{equation}
	for all $u,v \in \dot{H}^0$ and 
	\begin{equation}
	\label{eq:appendix_bound}
	\norm{F(u)}{\dot{H}^\theta} \le C\left( 1 + \norm{u}{\dot{H}^\theta}\right)
	\end{equation}
	for all $u \in \dot{H}^\theta$, $\theta \in [0,1/2)$. If also $f(0) = 0$, then~\eqref{eq:appendix_bound} holds for $\theta \in (1/2,1)$. If, in addition, $f$ is continuously differentiable with a bounded derivative $f'$, then~\eqref{eq:appendix_bound} holds for $\theta=1$.
\end{proposition}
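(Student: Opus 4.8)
The plan is to treat the three claims separately, each time reducing matters to the Sobolev--Slobodeckij characterisation~\eqref{eq:sobolev_dot_equivalence} of $\dot{H}^\theta$ and to the pointwise bounds~\eqref{eq:appendix_f_bound_lip}--\eqref{eq:appendix_f_bound_lin_growth} on $f$. Claim~\eqref{eq:appendix_lipschitz} is immediate: \eqref{eq:appendix_f_bound_lip} gives $|F(u)(x)-F(v)(x)| = |f(u(x))-f(v(x))| \le C|u(x)-v(x)|$ for a.e.\ $x \in \cD$, and squaring and integrating over $\cD$ yields the claim since $\dot{H}^0 = L^2(\cD)$.

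For~\eqref{eq:appendix_bound} with $\theta \in [0,1/2)$ I would use that on this range $\dot{H}^\theta = H^\theta$ with equivalent norms. The $L^2$-part is bounded by~\eqref{eq:appendix_f_bound_lin_growth}, $\norm{F(u)}{\dot{H}^0}^2 \le C^2 \int_\cD (1+|u(x)|)^2 \dd x \lesssim 1 + \norm{u}{\dot{H}^0}^2$, while for the Gagliardo seminorm~\eqref{eq:appendix_f_bound_lip} gives $|f(u(x))-f(u(y))| \le C|u(x)-u(y)|$ and hence $\int_{\cD\times\cD} |F(u)(x)-F(u)(y)|^2 |x-y|^{-d-2\theta} \dd x \dd y \le C^2 \int_{\cD\times\cD} |u(x)-u(y)|^2 |x-y|^{-d-2\theta} \dd x \dd y$. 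Adding the two bounds, taking square roots and invoking the norm equivalence produces~\eqref{eq:appendix_bound}. I note that this seminorm computation in fact gives $\norm{F(u)}{H^\theta} \le C(1+\norm{u}{H^\theta})$ for \emph{every} $\theta \in (0,1)$; only the identification of $H^\theta$ with $\dot{H}^\theta$ breaks down once $\theta > 1/2$.

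Thus for $\theta \in (1/2,1)$ under the additional hypothesis $f(0)=0$ the only point left to verify is that $F(u) \in \dot{H}^\theta$, i.e., by~\eqref{eq:sobolev_dot_equivalence}, that $F(u)$ vanishes a.e.\ on $\partial\cD$. Here I would argue by approximation together with weak closedness: recalling that for $\theta \in (1/2,3/2)$ the space $\dot{H}^\theta$ is the closure of $C_c^\infty(\cD)$ in $H^\theta$, choose $u_n \in C_c^\infty(\cD)$ with $u_n \to u$ in $\dot{H}^\theta$. Since $f(0)=0$ and $f$ is Lipschitz, $F(u_n) = f\circ u_n$ is Lipschitz with support contained in $\operatorname{supp}(u_n) \Subset \cD$, hence $F(u_n) \in H^1_0(\cD) \subset \dot{H}^\theta$. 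Moreover $(F(u_n))_n$ is bounded in $H^\theta$ by the uniform Gagliardo bound above and $F(u_n) \to F(u)$ in $\dot{H}^0$ by~\eqref{eq:appendix_lipschitz}, so $F(u_n) \rightharpoonup F(u)$ weakly in $H^\theta$; as $\dot{H}^\theta$ is a closed, hence weakly closed, subspace of $H^\theta$, we get $F(u) \in \dot{H}^\theta$, and the asserted bound follows from the $H^\theta$-estimate and the norm equivalence.

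For $\theta = 1$, with $f \in C^1$ and $f'$ bounded (and $f(0)=0$), I would invoke the chain rule for Sobolev functions to get $F(u) = f\circ u \in H^1(\cD)$ with $\nabla(f\circ u) = f'(u)\,\nabla u$ a.e., so that $\norm{\nabla(f\circ u)}{\dot{H}^0} = \norm{f'(u)\,\nabla u}{\dot{H}^0} \le C\norm{\nabla u}{\dot{H}^0}$, while $\norm{f\circ u}{\dot{H}^0} \le C\norm{u}{\dot{H}^0}$ by~\eqref{eq:appendix_lipschitz}. Membership $F(u) \in \dot{H}^1 = H^1_0(\cD)$ is obtained by the same weak-closedness argument as above with $H^1$ in place of $H^\theta$, and then $\norm{F(u)}{\dot{H}^1} \simeq \norm{F(u)}{H^1} \le C(1+\norm{u}{H^1}) \simeq C(1+\norm{u}{\dot{H}^1})$. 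The main obstacle throughout is precisely this point --- showing that $F(u)$ lies in the closed subspace $\dot{H}^\theta$ of $H^\theta$, i.e., that it inherits the homogeneous boundary condition from $u$; the weak-closedness trick is what lets us sidestep proving continuity of the superposition operator on fractional Sobolev spaces, which would otherwise be the delicate ingredient.
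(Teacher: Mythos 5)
Your proof is correct and follows essentially the same route as the paper's: the pointwise Lipschitz and linear-growth bounds control the $L^2$ part and the Gagliardo seminorm via the identification~\eqref{eq:sobolev_dot_equivalence}, and the chain rule for weak derivatives handles $\theta=1$. The only difference is that where the paper disposes of the boundary condition for $\theta\in(1/2,1)$ with the one-line remark that $F(u)$ ``inherits'' it from $u$ when $f(0)=0$, you substantiate this with a density and weak-closedness argument ($F(u_n)$ compactly supported for $u_n\in C_c^\infty(\cD)$, uniformly bounded in $H^\theta$, converging in $L^2$, hence weakly convergent into the closed subspace $\dot{H}^\theta$), which is a legitimate and more careful way to justify the same step.
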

\begin{proof}
	The inequality~\eqref{eq:appendix_lipschitz} is a direct consequence of~\eqref{eq:appendix_f_bound_lip} via
	\begin{equation*}
		\norm{F(u)-F(v)}{\dot{H}^0}^2 = \int_{\cD} |f(u(x))-f(v(x))|^2 \dd x \le C^2 \int_\cD |u(x)-v(x)|^2 \dd x = C^2 \norm{u-v}{\dot{H}^0}^2.
	\end{equation*}
	For~\eqref{eq:appendix_bound} with $\theta \in [0,1/2)$ we also make use of~\eqref{eq:sobolev_dot_equivalence} and~\eqref{eq:appendix_f_bound_lin_growth} to see that
	\begin{align*}
		\norm{F(u)}{\dot{H}^\theta}^2 &\lesssim \norm{F(u)}{H^\theta}^2 = \norm{F(u)}{\dot{H}^0}^2 + \int_{\cD \times \cD} \frac{|f(u(x))-f(u(y))|^2}{|x-y|^{d + 2 \theta}} \dd x \dd y \\
		&\lesssim 1 + \norm{u}{\dot{H}^0}^2 + \int_{\cD \times \cD} \frac{|u(x)-u(y)|^2}{|x-y|^{d + 2 \theta}} \dd x \dd y = 1 + \norm{u}{H^\theta}^2 \lesssim 1 + \norm{u}{\dot{H}^\theta}^2.
	\end{align*} 
	The same argument is used for \eqref{eq:appendix_bound} when $\theta \in (1/2,1)$, noting that the condition $f(0)=0$ means that $F(u)$ will inherit the boundary condition of $u \in \dot{H}^\theta$. For $\theta = 1$ we simply note that, due to the definition of $\norm{\cdot}{\dot{H}^1}$, the chain rule for weak derivatives and the assumption that $|f'(x)| \le C$ for all $x \in \R$, 
	\begin{equation*}
		\norm{F(u)}{\dot{H}^1}^2 = \int_{\cD} |\nabla f(u(x))|^2 \dd x = \int_{\cD} |f'(u(x))|^2 |\nabla u(x)|^2 \dd x \le C^2 \norm{u}{\dot{H}^1}^2
	\end{equation*}
	which completes the proof of the proposition.
\end{proof}

Next, we show that $F'$ fulfills a negative norm bound if $f'$ is Lipschitz continuous.

\begin{proposition}
	Let $f$ be a continuously differentiable function of at most linear growth with bounded and Lipschitz continuous derivative $f'$ and let $F$ be the corresponding Nemytskij operator. Then, there exists a constant $C>0$ such that
	\begin{equation}
	\label{eq:appendix_derivative_bound}
	\norm{F'(u)v }{\dot{H}^0} \le C \norm{v}{\dot{H}^0},
	\end{equation}
	for all $u,v \in \dot{H}^0$, and 
	\begin{equation}
	\label{eq:appendix_negative_derivative_bound}
	\norm{F'(u)v}{\dot{H}^{-\mu}} \le C \left(1+\norm{u}{\dot{H}^\nu}\right) \norm{v}{\dot{H}^{-\nu}},
	\end{equation}
	for all $u \in \dot{H}^\nu$ and $v \in \dot{H}^{-\nu}$ where $\epsilon > 0$, $\nu \in [0,1/2) \cup (1/2,1)$ and $ \mu \ge \max(d/2+\epsilon,\nu)$. If, in addition, $f'$ is differentiable with a bounded derivative $f''$, then~\eqref{eq:appendix_negative_derivative_bound} holds for $\nu =1$.
\end{proposition}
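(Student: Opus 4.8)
The plan is to establish~\eqref{eq:appendix_derivative_bound} by a pointwise estimate and to reduce~\eqref{eq:appendix_negative_derivative_bound} to a product estimate in positive-order Sobolev spaces combined with a duality argument. For~\eqref{eq:appendix_derivative_bound}, since $(F'(u)v)(x) = f'(u(x))v(x)$ and $|f'| \le C$,
\begin{equation*}
\norm{F'(u)v}{\dot{H}^0}^2 = \int_\cD |f'(u(x))|^2 |v(x)|^2 \dd x \le C^2 \norm{v}{\dot{H}^0}^2.
\end{equation*}
For~\eqref{eq:appendix_negative_derivative_bound}, it suffices, by the continuous embedding $\dot{H}^{\mu'} \subset \dot{H}^\mu$ for $\mu' \ge \mu$ and the resulting bound $\norm{\cdot}{\dot{H}^{-\mu'}} \le C\norm{\cdot}{\dot{H}^{-\mu}}$, to treat $\mu = \max(d/2+\epsilon,\nu)$; for small $\epsilon$ this $\mu$ lies in $(1/2,3/2)\cup(3/2,2)$, so that~\eqref{eq:sobolev_dot_equivalence} applies. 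The key claim is that there is a constant $C>0$ with
\begin{equation*}
\norm{f'(u)w}{\dot{H}^\nu} \le C\big(1+\norm{u}{\dot{H}^\nu}\big)\norm{w}{\dot{H}^\mu}
\end{equation*}
for all $u \in \dot{H}^\nu$ and $w \in \dot{H}^\mu$, i.e., $F'(u) \in \cL(\dot{H}^\mu,\dot{H}^\nu)$ with norm at most $C(1+\norm{u}{\dot{H}^\nu})$. Since $F'(u)$ is symmetric on $\dot{H}^0$, $\inpro[\dot{H}^0]{f'(u)v}{w} = \inpro[\dot{H}^0]{v}{f'(u)w}$ for $v,w \in \dot{H}^0$, and $\dot{H}^0$ is dense in $\dot{H}^{-\nu}$, the adjoint of $F'(u)\colon \dot{H}^\mu \to \dot{H}^\nu$ with respect to $\inpro[\dot{H}^0]{\cdot}{\cdot}$ is the unique continuous extension of $F'(u)$ to a bounded operator $\dot{H}^{-\nu} \simeq (\dot{H}^\nu)^* \to \dot{H}^{-\mu} \simeq (\dot{H}^\mu)^*$ with the same norm bound; this yields~\eqref{eq:appendix_negative_derivative_bound} and fixes the meaning of $F'(u)v$ for $v \in \dot{H}^{-\nu}$.

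To prove the claim, observe that since $\mu > d/2$, the Sobolev embedding gives $\dot{H}^\mu \hookrightarrow H^\mu \hookrightarrow L^\infty(\cD)$, hence $\norm{w}{L^\infty} \le C\norm{w}{\dot{H}^\mu}$. For $\nu \in [0,1/2)$, where $\dot{H}^\nu = H^\nu$ with equivalent norms, the $L^2$ part is controlled by $\norm{f'(u)w}{\dot{H}^0} \le C\norm{w}{\dot{H}^0} \le C\norm{w}{\dot{H}^\mu}$, and for the Sobolev--Slobodeckij seminorm one splits
\begin{equation*}
f'(u(x))w(x) - f'(u(y))w(y) = f'(u(x))\big(w(x)-w(y)\big) + \big(f'(u(x))-f'(u(y))\big)w(y),
\end{equation*}
so that, writing $|\cdot|_{H^\nu}$ for the seminorm, the first term contributes at most $C\,|w|_{H^\nu}^2 \le C\norm{w}{\dot{H}^\mu}^2$ (using $\mu \ge \nu$) and the second, by the Lipschitz continuity of $f'$, at most $C\norm{w}{L^\infty}^2\,|u|_{H^\nu}^2 \le C\norm{w}{\dot{H}^\mu}^2\norm{u}{\dot{H}^\nu}^2$. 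For $\nu \in (1/2,1)$ the same estimate holds, and in addition $f'(u)w$ inherits the boundary condition of $w \in \dot{H}^\mu$ (as $\mu > 1/2$), so $f'(u)w \in \dot{H}^\nu$ by~\eqref{eq:sobolev_dot_equivalence}. For $\nu = 1$, one uses instead the chain rule for weak derivatives, $\nabla(f'(u)w) = f''(u)(\nabla u)w + f'(u)\nabla w$, to obtain
\begin{align*}
\norm{f'(u)w}{\dot{H}^1} &\le C\norm{w}{\dot{H}^0} + \norm{f''}{L^\infty}\norm{w}{L^\infty}\norm{u}{\dot{H}^1} + \norm{f'}{L^\infty}\norm{w}{\dot{H}^1} \\
&\le C\big(1+\norm{u}{\dot{H}^1}\big)\norm{w}{\dot{H}^\mu},
\end{align*}
using $\mu \ge 1$ and $\dot{H}^\mu \hookrightarrow L^\infty$, and $f'(u)w \in H^1_0 = \dot{H}^1$ because $w$ is.

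The main obstacle is the product estimate for fractional $\nu$: one must control the Gagliardo seminorm of $f'(u)w$, and the decisive point is that the embedding $\dot{H}^\mu \hookrightarrow L^\infty$ --- valid precisely because $\mu > d/2$, which is where the hypothesis $\mu \ge d/2+\epsilon$ enters --- lets the oscillation term $(f'(u(x))-f'(u(y)))w(y)$ be absorbed into $\norm{w}{L^\infty}$ times the $H^\nu$-seminorm of $u$. The rest is bookkeeping: tracking the norm equivalences and boundary conditions encoded in~\eqref{eq:sobolev_dot_equivalence} and checking that the exceptional exponents $1/2$ and $3/2$ are avoided for the chosen $\mu$ and the given $\nu$.
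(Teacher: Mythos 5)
Your proposal is correct and follows essentially the same route as the paper's proof: the same pointwise bound for \eqref{eq:appendix_derivative_bound}, the same key product estimate $\norm{F'(u)}{\cL(\dot{H}^\mu,\dot{H}^\nu)} \lesssim 1+\norm{u}{\dot{H}^\nu}$ proved by splitting the Gagliardo seminorm and invoking the Sobolev embedding $\dot{H}^\mu \hookrightarrow L^\infty(\cD)$ (with the chain rule for $\nu=1$), and the same duality argument via the symmetry of $F'(u)$ on $\dot{H}^0$ and density of $\dot{H}^0$ in $\dot{H}^{-\nu}$. The only differences are cosmetic: you reduce to the minimal admissible $\mu$ first, while the paper runs the argument for general $\mu$ directly.
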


\begin{proof}
	The first estimate \eqref{eq:appendix_derivative_bound} is a direct consequence of the assumption that there exists a constant $C>0$ such that $|f'(x)|\le C$ for all $x \in \R$ via the estimate 
	\begin{equation*}
		\norm{F'(u)v }{\dot{H}^0}^2
		= \int_{\cD} |f'(u(x)) v(x)|^2 \dd x
		\le C^2 \int_{\cD} |v(x)|^2 \dd x
		= C^2 \norm{v}{\dot{H}^0}
	\end{equation*}
	for all $u, v \in \dot{H}^0$. This also shows~\eqref{eq:appendix_negative_derivative_bound} for $\nu = 0$. For $\nu > 0$, we mimic the approach of~\cite[Lemma~4.4]{WGT14}. Let $u \in \dot{H}^\nu$ and $v \in \dot{H}^\mu$. Then $F'(u)v[x] = f'(u(x))v(x) = 0$ for a.e. $x \in \partial \cD$ since $v(x) = 0$ for a.e. $x \in \partial \cD$, as a consequence of $\mu > 1/2$ and~\eqref{eq:sobolev_dot_equivalence}. We may therefore use~\eqref{eq:sobolev_dot_equivalence} to obtain, if $\nu \in (0,1/2) \cup (1/2,1)$, that
	\begin{equation*}
		\norm{F'(u)v}{\dot{H}^\nu}^2 \lesssim \norm{F'(u)v}{H^\nu}^2 = \norm{F'(u)v}{\dot{H}^0}^2 + \int_{\cD \times \cD} \frac{| f'(u(x))v(x) -  f'(u(y))v(y)|^2}{|x-y|^{d+2\nu}}\dd x \dd y.
	\end{equation*}
	Using \eqref{eq:appendix_derivative_bound}, the inequality $(a+b)^2\le 2(a^2 + b^2)$, $a,b\in\R$, the Lipschitz assumption on $f'$ and the fact that since $\mu > d/2$, by the Sobolev embedding theorem $\dot{H}^\mu \subset L^\infty(\cD)$ continuously, we find that 
	\begin{align*}
	\norm{F'(u)v}{\dot{H}^\nu}^2 &\lesssim \norm{v}{\dot{H}^0}^2 + \int_{\cD \times \cD} \frac{| f'(u(x))(v(x) -  v(y))|^2}{|x-y|^{d+2\nu}}\dd x \dd y \\ 
	&\quad+ \int_{\cD \times \cD} \frac{| (f'(u(x)) -  f'(u(y)))v(y)|^2}{|x-y|^{d+2\nu}}\dd x \dd y \\
	&\lesssim \norm{v}{\dot{H}^0}^2 + \int_{\cD \times \cD} \frac{|(v(x) -  v(y))|^2}{|x-y|^{d+2\nu}}\dd x \dd y \\ 
	&\quad+ \norm{v}{L^\infty(\cD)}^2 \int_{\cD \times \cD} \frac{| u(x) -  u(y)|^2}{|x-y|^{d+2\nu}}\dd x \dd y \\ 
	&\lesssim \norm{v}{\dot{H}^\nu}^2 + \norm{v}{L^\infty(\cD)}^2 \norm{u}{\dot{H}^\nu}^2 \lesssim \norm{v}{\dot{H}^\mu}^2(1 + \norm{u}{\dot{H}^\nu}^2).
	\end{align*}
	If $\nu = 1$ and $f'$ is differentiable with $f''$ bounded, we directly use the definition of $\dot{H}^1$ to see that, by the same arguments as above,
	\begin{align*}
	\norm{F'(u)v}{\dot{H}^\nu}^2 &= \int_\cD |\nabla (f'(u(x))v(x))|^2 \dd x \\
	&\lesssim \int_\cD |f'(u(x))\nabla v(x)|^2 \dd x + \int_\cD |f''(u(x))\nabla u(x) v(x)|^2 \dd x \\
	&\lesssim \norm{v}{\dot{H}^0}^2 + \norm{v}{L^\infty(\cD)}^2 \norm{u}{\dot{H}^1}^2 \lesssim \norm{v}{\dot{H}^\mu}^2(1 + \norm{u}{\dot{H}^\nu}^2).
	\end{align*} 
	In summary,
	\begin{equation*}
		\norm{\Lambda^{\frac{\nu}{2}}F'(u) \Lambda^{-\frac{\mu}{2}}}{\cL(\dot{H}^{0})}^2 =
		\norm{F'(u)}{\cL(\dot{H}^{\mu},\dot{H}^\nu)}^2 \lesssim 1 + \norm{u}{\dot{H}^\nu}^2
	\end{equation*}
	and thus, using that $F'(u)$ is symmetric on $\dot{H}^0$, we have for $v \in \dot{H}^0$ that 
	\begin{align*}
		\norm{\Lambda^{-\frac{\mu}{2}} F'(u) v}{\dot{H}^0} &= \sup_{\substack{w \in \dot{H}^0 \\ \norm{w}{\dot{H}^0}=1}} \left| \inpro[\dot{H}^0]{\Lambda^{-\frac{\mu}{2}} F'(u) v}{w}\right| = \sup_{\substack{w \in \dot{H}^0 \\ \norm{w}{\dot{H}^0}=1}} \left| \inpro[\dot{H}^0]{\Lambda^{-\frac{\nu}{2}}v}{\Lambda^{\frac{\nu}{2}} F'(u)\Lambda^{-\frac{\mu}{2}} w}\right| \\
		&\le \norm{v}{\dot{H}^{-\nu}} \norm{\Lambda^{\frac{\nu}{2}}F'(u) \Lambda^{-\frac{\mu}{2}}}{\cL(\dot{H}^{0})} 
	\end{align*}
	and since $\dot{H}^0$ is dense in $\dot{H}^{-\nu}$, this implies that
	\begin{equation*}
	\norm{F'(u)}{\cL(\dot{H}^{-\nu},\dot{H}^{-\mu})}^2 \lesssim 1 + \norm{u}{\dot{H}^\nu}^2,
	\end{equation*}
	which is equivalent to \eqref{eq:appendix_negative_derivative_bound}.
\end{proof}
	
	\bibliographystyle{hplain}
	\bibliography{wave-cov}
	
\end{document}